\numberwithin{equation}{section}
\newcommand\qedsymbol{\hbox{$\Box$}}
\newcommand\qed{\relax\ifmmode\Box\else
  {\unskip\nobreak\hfil\penalty50\hskip1em\null\nobreak\hfil\qedsymbol
  \parfillskip=\z@\finalhyphendemerits=0\endgraf}\fi}
\newenvironment{proof}[1][{}]{\par\noindent Proof{#1}. }{\qed}
\newcommand{\hotimes}{{\,\hat{\otimes}\,}}
\newcommand{\Tree}{{\mathsf{Tree}}}
\newcommand{\pf}{{\,\pitchfork}}
\newcommand{\bfzero}{{\mathbf 0}}
\newcommand{\End}{{\mathrm{End}}}
\newcommand{\HoAlg}{{\mathsf{HoAlg}}}
\renewcommand{\c}{\circ}
\newcommand{\inv}{{\mathrm{inv}}}
\newcommand{\Cyl}{{\mathrm{Cyl}}}
\newcommand{\Cat}{{\mathrm{Cat}}}
\newcommand{\Hom}{{\mathrm{Hom}}}
\newcommand{\nod}{{\mathrm{nod}}}
\newcommand{\map}{{\mathbf{map}}}
\newcommand{\Sh}{{\mathrm{Sh}}}
\newcommand{\ari}{{\mathrm{ari}}}
\newcommand{\Gr}{{\mathrm{Gr}}}
\newcommand{\Conv}{{\mathrm{Conv}}}
\newcommand{\Cobar}{{\mathrm{Cobar}}}
\newcommand{\coDer}{{\mathrm {coDer}}}
\newcommand{\Lie}{{\mathsf{Lie}}}
\newcommand{\As}{{\mathsf{As}}}
\newcommand{\coAs}{{\mathsf{coAs}}}
\newcommand{\coLie}{{\mathsf{coLie}}}
\newcommand{\Com}{{\mathsf{Com}}}
\newcommand{\coCom}{{\mathsf{coCom}}}
\newcommand{\Ch}{{\mathsf{Ch}}}
\newcommand{\bul}{{\bullet}}
\newcommand{\al}{{\alpha}}
\newcommand{\ml}{{\mathfrak{l}}}
\newcommand{\mb}{{\mathfrak{b}}}
\newcommand{\mG}{{\mathfrak{G}}}
\newcommand{\mD}{{\mathfrak{D}}}
\newcommand{\mS}{{\mathfrak{S}}}
\newcommand{\mF}{{\mathfrak{F}}}
\newcommand{\bs}{{\bf s}}
\newcommand{\bt}{{\mathbf{t}}}
\newcommand{\bsi}{{\bf s}^{-1}\,}
\newcommand{\Om}{{\Omega}}
\newcommand{\si}{{\sigma}}
\newcommand{\ga}{{\gamma}}
\newcommand{\vf}{{\varphi}}
\newcommand{\ve}{{\varepsilon}}
\newcommand{\pa}{{\partial}}
\newcommand{\cF}{{\cal F}}
\newcommand{\cC}{{\cal C}}
\newcommand{\cK}{{\mathcal{K}}}
\newcommand{\cU}{{\mathcal{U}}}
\newcommand{\cO}{\mathcal{O}}
\newcommand{\bbk}{{\Bbbk}}
\newcommand{\bbZ}{{\mathbb Z}}
\newcommand{\D}{{\Delta}}
\newcommand{\sD}{{\mathsf{\Delta}}}
\newcommand{\wt}[1]{{\widetilde{#1}}}
\newcommand{\ti}[1]{{\tilde{#1}}}
\newcommand{\mMC}{\mathfrak{MC}}
\newcommand{\maps}{\,\colon\,}
\newcommand{\und}[1]{{\underline{#1}}}
\newcommand{\sLie}{{\mathfrak{S}}\mathsf{Lie}_{\infty}}
\newcommand{\tLie}{{\mathfrak{S}}\mathsf{Lie}_{\infty}^{\mathrm{MC}}}
\newcommand{\tensor}{\otimes}
\newcommand{\xto}[1]{\stackrel{#1}{\longrightarrow}}
\newcommand{\MC}{\mathrm{MC}}
\newcommand{\Objects}{{\mathrm{Objects}}}
\DeclareMathOperator{\id}{\mathrm{id}}
\newtheorem{thm}{Theorem}[section]
\newtheorem{defi}[thm]{Definition}
\newtheorem{lem}[thm]{Lemma}
\newtheorem{cor}[thm]{Corollary}
\newtheorem{prop}[thm]{Proposition}
\newtheorem{cond}[thm]{Condition}
\newtheorem{example}[thm]{Example}
\newtheorem{remark}[thm]{Remark}
\title{What do homotopy algebras form?}
\author{Vasily A. Dolgushev, Alexander E. Hoffnung, and Christopher L. Rogers}
\date{}
\begin{document}

\maketitle

\begin{abstract}
In paper \cite{EnhancedLie}, we constructed a symmetric monoidal 
category $\tLie$ whose objects are shifted (and filtered) $L_{\infty}$-algebras. 
Here, we fix a cooperad $\cC$ and show that algebras over the operad 
$\Cobar(\cC)$ naturally form a category enriched over $\tLie$. 
Following \cite{EnhancedLie}, we ``integrate'' this $\tLie$-enriched category
to a simplicial category $\HoAlg^{\sD}_{\cC}$ whose mapping spaces 
are Kan complexes. The simplicial category $\HoAlg^{\sD}_{\cC}$ gives us 
a particularly nice model of an $(\infty,1)$-category of $\Cobar(\cC)$-algebras. 
We show that the homotopy category of $\HoAlg^{\sD}_{\cC}$ is 
the localization of the category of $\Cobar(\cC)$-algebras and 
$\infty$-morphisms with respect to $\infty$ quasi-isomorphisms.  
Finally, we show that the Homotopy Transfer Theorem is 
a simple consequence of the Goldman-Millson theorem. 
\end{abstract}

\section{Introduction}
This work is motivated by Dmitry Tamarkin's answer \cite{dgcat} to Vladimir Drinfeld's 
question ``What do dg categories form?''  \cite{Drinfeld-dgcat}, and by papers \cite{Berglund}, 
\cite{DotsPoncin}, \cite{Ezra-infty}, \cite{Hinich:1997}, and \cite{Lazarev}.  
Here, we give an answer to the question ``What do homotopy algebras form?'' 

Homotopy algebras and their generalizations appear in constructions of string topology, in 
rational homotopy theory, symplectic topology, deformation quantization, and quantum field 
theory.  For a gentle introduction to this topic, we refer the reader to paper
\cite{Bruno-plus-equals}. For a more detailed exposition, a standard reference is
book \cite{LV} by Jean-Louis Loday and Bruno Vallette.    

Despite an important role of homotopy algebras, it is not clear what higher 
categorical structure stands behind the homotopy theory of homotopy algebras.
A possible way to answer to this question is to use closed model categories 
and this approach is undertaken in paper \cite{Bruno-HT} by Bruno Vallette. 
Here, we suggest a different approach which is based on the use 
of the convolution $L_{\infty}$-algebra and the Getzler-Hinich construction 
\cite{Ezra-infty}, \cite{Hinich:1997}. 

By a homotopy algebra structure on a cochain complex of
$\bbk$-vector\footnote{In this paper, we assume that
  $\textrm{char}(\bbk) = 0$.}  spaces $A$ we mean a
$\Cobar(\cC)$-algebra structure on $A$, where $\cC$ is a differential
graded (dg) cooperad satisfying some technical conditions. In this
paper, we fix such a cooperad $\cC$ and show that
$\Cobar(\cC)$-algebras form a $\tLie$-enriched category
$\HoAlg_{\cC}$, where $\tLie$ is the enhancement of the symmetric
monoidal category of shifted $L_{\infty}$-algebras introduced 
in\footnote{See also Section \ref{sec:tLie} of this paper.} \cite{EnhancedLie}.  
Then, using a generalization of the Getzler-Hinich construction
\cite{Ezra-infty}, \cite{Hinich:1997}, we show that the
$\tLie$-category of $\Cobar(\cC)$-algebras can be ``integrated'' to
the category $\HoAlg^{\sD}_{\cC}$ enriched over $\infty$-groupoids
(a.k.a. Kan complexes). 

Our approach is conceptually similar to a standard construction of the
simplicial category of chain complexes i.e., ``trivial'' homotopy
algebras. There, instead of constructing the simplicial mapping
spaces directly, one exploits the simple fact that chain complexes
are naturally enriched over chain complexes i.e., abelian
$L_{\infty}$-algebras. The simplicial category is then constructed
by first truncating the mapping complexes and applying the
Dold-Kan functor. It has been noted, for example by E.\ Getzler in \cite{Ezra-infty}, that the
integration of a $L_{\infty}$-algebra is, up to homotopy, a non-abelian
analogue of the Dold-Kan functor. Hence, 
to construct the simplicial category $\HoAlg^{\sD}_{\cC}$ of non-trivial homotopy algebras,
we proceed via analogy by first considering a category of homotopy algebras enriched
over $L_{\infty}$-algebras i.e., ``non-abelian complexes''.

Furthermore, we prove that the homotopy category of
$\HoAlg^{\sD}_{\cC}$ {\it is} the localization of the category of
$\Cobar(\cC)$-algebras with respect to $\infty$ quasi-isomorphisms, and
this is indeed the correct homotopy category of homotopy algebras (see
for example \cite[Thm.\ 11.4.12]{LV}).
Thus the simplicial category $\HoAlg^{\sD}_{\cC}$ is the sought higher
categorical structure which stands behind the homotopy category of
homotopy algebras.
 
For simplicity of the exposition, we present all constructions in the ``1-colored'' setting.
However, it is not hard to see the all the statements can be easily extended to the multi-colored 
setting.

Our paper is organized as follows. Section \ref{sec:prereq-HA} is 
devoted to prerequisites on homotopy algebras. In this section, 
we describe the convolution $L_{\infty}$-algebra $\Conv(V,A)$ associated to 
a $\cC$-coalgebra $V$ and a $\Cobar(\cC)$-algebra $A$, and recall \cite{DotsPoncin}
that, for every pair of $\Cobar(\cC)$-algebras $A,B$, Maurer-Cartan (MC) 
elements of $\Conv(\cC(A), B)$ are in bijection with $\infty$-morphisms 
from $A$ to $B$.  Section \ref{sec:HoAlg-cC} starts with a brief reminder of 
the symmetric monoidal category $\tLie$ introduced in \cite{EnhancedLie}.
Then we construct the 
$\tLie$-enriched category $\HoAlg_{\cC}$ and the corresponding simplicial 
category $\HoAlg^{\sD}_{\cC}$.  In Section \ref{sec:HT}, we show that 
$\pi_0 \big( \HoAlg^{\sD}_{\cC} \big)$ is the homotopy category of 
$\Cobar(\cC)$-algebras, i.e. every $\infty$ quasi-isomorphism of 
$\Cobar(\cC)$-algebras gives an invertible morphism in $\pi_0 \big( \HoAlg^{\sD}_{\cC} \big)$ 
and the category $\pi_0 \big( \HoAlg^{\sD}_{\cC} \big)$ has the desired universal 
property.  Also in Section \ref{sec:HT}, we characterize $\infty$
  quasi-isomorphisms as precisely those morphisms which
  induce homotopy equivalences between mapping spaces via (pre)composition.
Finally, in Section \ref{sec:HTT}, we give a very concise proof
of the Homotopy Transfer Theorem for homotopy algebras. This proof is 
based on a construction from \cite{DefHomotInvar} and a version of the 
Goldman-Millson theorem from \cite{GMtheorem}.  
 
\paragraph{On a more general definition of a homotopy algebra.} 
One could also define a homotopy algebra as an algebra over 
an operad $\cO$ which is freely generated by a collection $\cC$  
when viewed as an operad in the category of graded vector spaces, 
where the collection $\cC$ and the differential on $\cO$ are subject 
to some technical conditions. It is not hard to generalize all the constructions of 
our paper to this more general setting. However, for simplicity of
the exposition, we decided to present the whole story for the 
case when $\cO$ is the cobar construction applied to a fixed 
cooperad.

\paragraph{Is the $\tLie$-enriched category of $L_{\infty}$-algebras 
related to the rational homotopy theory?} It is not surprising that the 
answer to this question is ``yes''. However, a precise formulation of 
the answer requires some technical conditions on $L_{\infty}$-algebras 
and some amendments to the definition of the mapping space. 
So separate note \cite{RHT} is devoted to this question.

\subsubsection*{Related work on this subject}  
While this preprint was in preparation, paper \cite{KhPQ} appeared on arXiv.org. 
In this paper, the authors constructed a quasi-category of Leibniz $\infty$-algebras also 
using the Getzler-Hinich construction \cite{Ezra-infty}, \cite{Hinich:1997}. 

Our paper agrees in spirit with treatise \cite{h-alg} by Jacob Lurie. 
In this treatise, J. Lurie develops a systematic approach to algebraic 
structures on objects in a fixed symmetric monoidal $\infty$-category. 
This approach allows him to define $E_{\infty}$-ring as a commutative algebra object 
in the $\infty$-category of spectra. Even though, the framework of the 
presentation of \cite{h-alg} is very general, we could not find a particular 
statement in the current version of draft \cite{h-alg} from which all statements 
proved in our paper will follow. One of the reasons for this is that, in \cite{h-alg}, 
J. Lurie usually assumes the all the (co)chain complexes under consideration
satisfy some kind of boundedness condition whereas in our paper we 
do not impose this assumption. 
  
In paper \cite{Bruno-HT}, Bruno Vallette used the simplicial localization 
methods of Dwyer and Kan \cite{DK} to upgrade the category of
$\Cobar(\cC)$-algebras with $\infty$-morphisms (for a cooperad $\cC$ 
satisfying some conditions) to a simplicial category (see Theorem 3.5 in \cite{Bruno-HT}). 
The homotopy category of this simplicial category is isomorphic to that of 
$\HoAlg^{\sD}_{\cC}$ and we believe that there is a more precise link between these
two simplicial categories.

~\\

\noindent
\textbf{Acknowledgements:} We would like to thank Thomas Willwacher
for useful discussions, and Jim Stasheff for helpful comments. 
V.A.D. and C.L.R. acknowledge NSF grant DMS-1161867 
for a partial support. C.L.R. also acknowledges support from the German Research Foundation 
(Deutsche Forschungsgemeinschaft (DFG)) through the Institutional Strategy of the
University of G\"ottingen. Finally, we would like to thank an anonymous referee for reading carefully 
our manuscript and for many helpful suggestions.

\subsubsection*{Notation and conventions} 
A big part of our conventions is borrowed from \cite{EnhancedLie}. 
For example, the ground field $\bbk$ has characteristic zero and
$\Ch_{\bbk}$ denotes the category of unbounded cochain complexes 
of $\bbk$-vector spaces.  Any $\bbZ$-graded vector space $V$ is tacitly considered 
as the cochain complex with the zero differential. Following 
\cite{EnhancedLie}  we frequently use the ubiquitous
abbreviation ``dg'' (differential graded) to refer to algebraic objects 
in $\Ch_{\bbk}$\,. The notation $\bs V$ (resp. by $\bsi V$) is reserved 
for the suspension (resp. the 
desuspension) of a cochain complex $V$\,, i.e.
$\big(\bs V\big)^{\bul} = V^{\bul-1}$ and $\big(\bs^{-1} V\big)^{\bul} = V^{\bul+1}$\,.
Finally, for a pair $V$, $W$ of $\bbZ$-graded vector spaces we denote by 
$$
\Hom (V,W)
$$
the corresponding inner-hom object in the category of $\bbZ$-graded vector spaces.

Following \cite{EnhancedLie} we tacitly assume the Koszul sign rule. In particular,   
$$
(-1)^{\ve(\si; v_1, \dots, v_m)}
$$ 
will always denote the sign factor corresponding to the permutation $\si \in S_m$ of 
homogeneous vectors $v_1, v_2, \dots, v_m$. Namely, 
\begin{equation}
\label{ve-si-vvv}
(-1)^{\ve(\si; v_1, \dots, v_m)} := \prod_{(i < j)} (-1)^{|v_i | |v_j|}\,,
\end{equation}
where the product is taken over all inversions $(i < j)$ of $\si \in S_m$.

For a finite group $G$ acting on a cochain complex
(or a graded vector space) $V$, we denote by 
$$
V^G,  \qquad \textrm{ and } \qquad  V_G,
$$
respectively, the subcomplex of $G$-invariants in $V$ and
the quotient complex of $G$-coinvariants. Using the advantage of 
the zero characteristic, we often identify $V_G$ with $V^G$ 
via this isomorphism
\begin{equation}
\label{coinvar-invar}
v ~\mapsto~ \sum_{g \in G} g (v) \maps V_G \to V^G\,. 
\end{equation}

For a graded vector space (or a cochain complex) $V$
the notation $S(V)$ (resp. $\und{S}(V)$) is reserved for the
underlying vector space of the
symmetric algebra (resp. the truncated symmetric algebra) of $V$: 
$$
S(V) = \bbk \oplus V \oplus S^2(V) \oplus S^3(V) \oplus \dots\,, 
$$ 
$$
\und{S}(V) =  V \oplus S^2(V) \oplus S^3(V) \oplus \dots\,,
$$
where 
$$
S^n(V) = \big( V^{\otimes_{\bbk}\, n} \big)_{S_n}\,.
$$

We denote by $\As$, $\Com$, $\Lie$ the operads governing 
associative, commutative (and associative), and Lie algebras, 
respectively. We set $\As(0) = \Com(0) =\bfzero$\,. In other words, algebras 
over $\As$ and $\Com$ are non-unital. We denote by $\As_{\infty}$, 
$\Com_{\infty}$, and $\Lie_{\infty}$ the dg operads which 
govern homotopy versions of the corresponding algebras. 
Furthermore, we denote by $\coAs$, $\coCom$, and $\coLie$, 
the cooperads which are obtained from $\As$, $\Com$, and $\Lie$ respectively, 
by taking the linear dual. 

For an augmented operad $\cO$,  
we denote by $\cO_{\c}$ the kernel of the augmentation.
Dually, for a coaugmented cooperad $\cC$ 
we denote by $\cC_{\c}$ the cokernel of the coaugmentation.
Recall that for every augmented operad $\cO$ (resp.  coaugmented cooperad $\cC$)
the collection  $\cO_{\c}$ (resp. $\cC_{\c}$) is naturally a pseudo-operad 
(resp. pseudo-cooperad) in the sense of \cite[Section 3.2]{notes} (resp. 
\cite[Section 3.4]{notes}). 

For an operad (resp. a cooperad) $P$ and a cochain complex $V$ we denote by 
$P(V)$ the free $P$-algebra (resp. the cofree\footnote{In this paper we only consider 
nilpotent coalgebras.} $P$-coalgebra) generated by $V$: 
\begin{equation}
\label{P-Schur-V}
P(V) : = \bigoplus_{n \ge 0} \Big( P(n) \otimes V^{\otimes \, n} \Big)_{S_n}\,.
\end{equation}
For example,
$$
\Com(V) = \und{S}(V) \qquad \textrm{and} \qquad \coCom(V) = \und{S}(V)\,.
$$

For a cooperad $\cC$, it is sometimes more convenient to 
work with the cofree $\cC$-coalgebra defined as the direct sum 
of the space of invariants (instead of coinvariants): 
\begin{equation}
\label{cC-V-invar}
\bigoplus_{n \ge 0} \Big( \cC(n) \otimes V^{\otimes \, n} \Big)^{S_n}\,.
\end{equation}
For example, it is more natural to define a $\cC$-coalgebra 
structure on a graded vector space (or a cochain complex) $V$ as 
a collection of comultiplication maps 
\begin{equation}
\label{Delta-n}
\D_n : V \to \Big( \cC(n) \otimes V^{\otimes \, n} \Big)^{S_n} 
\end{equation}
satisfying some natural coassociativity axioms which are 
obtained by dualizing the corresponding axioms for algebras 
over an operad.

We denote this direct sum by  
\begin{equation}
\label{cC-V-inv}
\cC(V)^{\inv} : = \bigoplus_{n \ge 0} \Big( \cC(n) \otimes V^{\otimes \, n} \Big)^{S_n}
\end{equation}
and keep in mind that $\cC(V)^{\inv}$ is isomorphic to $\cC(V)$ via map \eqref{coinvar-invar}. 

Following \cite{EnhancedLie}, $\mS$ and $\mS^{-1}$ denote the underlying 
collections of the operads 
$$
\End_{\bsi \bbk}\,, \qquad \textrm{and} \qquad \End_{\bs \bbk}\,,
$$ 
respectively. Furthermore, for a dg (co)operad $P$, we denote 
by $\mS P$ (resp. $\mS^{-1} P$) the dg (co)operad which is obtained from 
$P$ by tensoring with $\mS$ (resp. $\mS^{-1}$): 
$$
\mS P : = \mS \otimes P\,, \qquad   
\mS^{-1} P : = \mS^{-1} \otimes P\,.
$$
For example, $\sLie$-algebras are algebra over the dg operad
\begin{equation}
\label{sLie-oper}
\sLie : =\Cobar(\coCom)
\end{equation}
and $L_{\infty}$-algebras are algebras over the dg operad 
\begin{equation}
\label{Lie-inf-oper}
\Lie_{\infty} : =\Cobar(\mS^{-1} \coCom)\,.
\end{equation}

Just as in \cite{EnhancedLie},
we often call $\sLie$-algebras {\it shifted $L_{\infty}$-algebras}. 
Although a $\sLie$-algebra structure on a cochain complex $V$ is the same thing 
as an $L_{\infty}$ structure on $\bs V$\,, working with $\sLie$-algebras has 
important technical advantages. This is why we prefer to deal with shifted 
$L_{\infty}$ structures on $V$ versus original $L_{\infty}$ structures on $\bs V$. 

Following \cite{EnhancedLie}, we denote the tensor product of $\infty$-morphisms 
of $\sLie$-algebras by $\otimes$ even though the tensor product of the 
$\sLie$-algebras is denoted by $\oplus$.  

We often use the plain arrow $\to$ for $\infty$-morphisms of homotopy algebras. 
Of course, it should be kept in mind that in general such morphisms are maps of 
the corresponding coalgebras but not the underlying cochain complexes.     

The abbreviation ``MC'' is reserved for the term ``Maurer-Cartan''.

\paragraph{Conventions about trees.} By a {\it tree} we mean
a connected graph without cycles with a marked vertex 
called {\it the root}.  In this paper, we assume that the root of 
every tree has valency $1$ (such trees are sometimes called  {\it planted}). 
The edge adjacent to the root is called the {\it root edge}.
Non-root vertices of valency $1$ are 
called {\it leaves}.  A vertex is called {\it internal} if it is 
neither a root nor a leaf. We always orient trees in the 
direction towards the root. Thus every internal vertex 
has at least $1$ incoming edge and exactly $1$ outgoing edge. 
An edge adjacent to a leaf is called {\it external}.   
A tree $\bt$ is called {\it planar} if, for every internal vertex $v$ of $\bt$, the set 
of edges terminating at $v$ carries a total order.

Let us recall \cite[Section 2]{notes} that for every  planar tree $\bt$ 
the set $V(\bt)$ of all its vertices is equipped with a natural total order
such that the root is the smallest vertex of the tree.  
 
For a non-negative integer $n$, an $n$-labeled planar tree $\bt$ is 
a planar tree equipped with 
an injective map 
\begin{equation}
\label{labeling}
\ml : \{1,2, \dots, n\} \to L(\bt)
\end{equation}
from the set $\{1,2, \dots, n\}$ to the set $L(\bt)$ of leaves of $\bt$\,.
Although the set $L(\bt)$ has a natural total order we do not require 
that map \eqref{labeling} is monotonous. 

The set $L(\bt)$ of leaves of an $n$-labeled planar tree $\bt$
splits into the disjoint union of the image $\ml(\{1,2, \dots, n\})$
and its complement. We call leaves in the image of $\ml$
{\it labeled}.

A vertex $x$ of an $n$-labeled planar tree $\bt$ is called 
{\it nodal} if it is neither the root, nor a labeled leaf. 
We denote by $V_{\nod}(\bt)$ the set of all nodal vertices of 
$\bt$. Keeping in mind the canonical total order on 
the set of all vertices of $\bt$ we can say things like
``the first nodal vertex'', ``the second nodal vertex'', and
``the $i$-th nodal vertex''. 

It is convenient to talk about (co)operads and pseudo-(co)operads 
using the groupoid  $\Tree(n)$ of $n$-labeled planar trees. 
Objects of $\Tree(n)$ are $n$-labeled planar trees and 
morphisms are \und{non-planar} isomorphisms of the corresponding 
(non-planar) trees compatible with labeling. 

Following \cite[Section 3.2, 3.4]{notes}, for an $n$-labelled planar tree $\bt$ and 
pseudo-operad $P$ (resp. pseudo-cooperad $Q$) the notation
$\mu_{\bt}$ (resp. the notation $\D_{\bt}$) is reserved for the multiplication 
map 
\begin{equation}
\label{mu-bt}
\mu_{\bt} :  P (r_1) \otimes P(r_2) \otimes \dots \otimes P(r_k) \to P(n)
\end{equation}
and the comultiplication map 
\begin{equation}
\label{Delta-bt}
\D_{\bt} : Q(n) \to Q(r_1) \otimes Q(r_2) \otimes \dots \otimes Q(r_k)\,.
\end{equation}
respectively. Here, $k$ is the number of nodal vertices of the planar tree $\bt$
and $r_i$ is the number of edges (of $\bt$) which terminate at the 
$i$-th nodal vertex of $\bt$\,. 

For example, if $\bt_{n,k,i}$ is the labeled planar tree shown 
on figure\footnote{On figures, small white circles are used for 
nodal vertices and small black circles are used for 
all the remaining vertices.} \ref{fig:bt-n-k-i} then the map 
\begin{equation}
\label{mu-bt-nki}
\mu_{\bt_{n,k,i}} :  P (n) \otimes P(k) \to P(n+k-1)
\end{equation}
is precisely the $i$-th elementary insertion
\begin{equation}
\label{circ-i}
\circ_i :   P (n) \otimes P(k) \to P(n+k-1)\,. 
\end{equation}
\begin{figure}[htp]
\centering 
\begin{tikzpicture}[scale=0.5]
\tikzstyle{w}=[circle, draw, minimum size=3, inner sep=1]
\tikzstyle{b}=[circle, draw, fill, minimum size=3, inner sep=1]
\node[b] (v1) at (0, 2) {};
\draw (0,2.6) node[anchor=center] {{\small $1$}};
\node[b] (v2) at (1.5, 2) {};
\draw (1.5,2.6) node[anchor=center] {{\small $2$}};
\draw (2.8,2) node[anchor=center] {{\small $\dots$}};
\node[b] (v1i) at (4, 2) {};
\draw (4,2.6) node[anchor=center] {{\small $i-1$}};
\node[w] (vv) at (6, 2) {};
\node[b] (vi) at (4.5, 4) {};
\draw (4.5,4.6) node[anchor=center] {{\small $i$}};
\draw (5.8,4) node[anchor=center] {{\small $\dots$}};
\node[b] (v1ik) at (7, 4) {};
\draw (7.6,4.6) node[anchor=center] {{\small $i+k-1$}};
\node[b] (vik) at (8, 2) {};
\draw (8,2.6) node[anchor=center] {{\small $i+k$}};
\draw (10,2) node[anchor=center] {{\small $\dots$}};
\node[b] (v1nk) at (12, 2) {};
\draw (12.6,2.6) node[anchor=center] {{\small $n+k-1$}};
\node[w] (v) at (6, 0.5) {};
\node[b] (r) at (6, -0.5) {};
\draw (vv) edge (vi);
\draw (vv) edge (v1ik);
\draw (v) edge (v1);
\draw (v) edge (v2);
\draw (v) edge (v1i);
\draw (v) edge (vv);
\draw (v) edge (vik);
\draw (v) edge (v1nk);
\draw (r) edge (v);
\end{tikzpicture}
\caption{The $(n+k-1)$-labeled planar tree $\bt_{n,k,i}$} 
\label{fig:bt-n-k-i}
\end{figure}

Recall that a (co)operad $P$ is {\it reduced} if it 
satisfies this technical condition 
\begin{equation}
\label{reduced}
P(0)= \bfzero\,.
\end{equation}

When we deal with reduced (co)operads, we may discard all labeled trees which have 
at least one nodal vertex with no incoming edges. In other words, one 
 may safely assume that nodal vertices of a labeled tree 
are precisely its internal vertices, i.e. map \eqref{labeling} is a bijection.

\section{Prerequisites on homotopy algebras}
\label{sec:prereq-HA}

Let $\cC$ be a dg coaugmented cooperad satisfying the following technical condition: 
\begin{cond}
\label{cond:cC-circ-filtered}
The pseudo-cooperad $\cC_{\c}$ carries an ascending filtration 
\begin{equation}
\label{cC-circ-filtr}
\bfzero =  \cF^0 \cC_{\c} \subset \cF^1 \cC_{\c} \subset  \cF^2 \cC_{\c} \subset  \cF^3 \cC_{\c} \subset \dots 
\end{equation}
which is compatible with the  pseudo-cooperad  structure in the following sense: 
\begin{equation}
\label{D-bt-filtr}
\D_{\bt} \big( \cF^m \cC_{\c} (n) \big) \subset 
\bigoplus_{m_1 + \dots + m_k  = m} 
\cF^{m_1} \cC_{\c} (r_1) \otimes  \cF^{m_2} \cC_{\c} (r_2) \otimes  \dots
\otimes  \cF^{m_k} \cC_{\c} (r_k)\,,
\end{equation}
$$
 \bt \in \Tree(n)\,,
$$
where $k$ is the number of nodal vertices of the planar tree $\bt$
and $r_i$ is the number of edges (of $\bt$) which terminate at the 
$i$-th nodal vertex of $\bt$\,. We also assume that $\cC_{\c}$ is cocomplete 
with respect to filtration \eqref{cC-circ-filtr}, i.e.
\begin{equation}
\label{cocomplete}
\cC_{\c} = \bigcup_{m} \cF^m \cC_{\c}\,. 
\end{equation}
\end{cond}

We will use the following pedestrian definition of 
homotopy algebras of a given type: 
\begin{defi}
\label{dfn:infinity}
Homotopy algebras of type $\cC$ are 
algebras in $\Ch_{\bbk}$ over the operad 
$\Cobar(\cC)$\,.  
\end{defi}
Thus, $A_{\infty}$-, $L_{\infty}$-, and
$\Com_{\infty}$-algebras are examples of 
homotopy algebras. Indeed,  $A_{\infty}$-algebras
are algebras over the operad $\Cobar(\mS^{-1} \coAs)$, 
$L_{\infty}$-algebras are algebras over \\ $\Cobar(\mS^{-1}\coCom)$, 
and $\Com_{\infty}$-algebras are algebras over $\Cobar(\mS^{-1} \coLie)$\,.

Let $A$ be a cochain complex and let 
\begin{equation}
\label{coDer-cC-A}
\coDer\big( \cC(A) \big) 
\end{equation}
be the dg Lie algebra of coderivations of the $\cC$-coalgebra $\cC(A)$\,.

It is known \cite[Corollary 5.3]{notes}, \cite[Proposition 2.15]{GJ}, 
that $\Cobar(\cC)$-algebra structures on $A$ are in bijection 
with MC elements of the dg Lie subalgebra 
\begin{equation}
\label{coDer-pr-cC-A}
\coDer'\big( \cC(A) \big)  \subset \coDer\big( \cC(A) \big) 
\end{equation}
of coderivations $Q$ satisfying the condition 
\begin{equation}
\label{coder-condition}
Q \Big|_{A} = 0\,.
\end{equation}

Thus every homotopy algebra $A$ of type $\cC$ gives 
us a (dg) $\cC$-coalgebra
\begin{equation}
\label{cC}
\Big( \cC(A), \pa + Q  \Big)
\end{equation}
where $\pa$ is the differential on $\cC(A)$ induced
by the ones on $A$ and $\cC$\,.

This observation is used to define a notion of 
$\infty$-morphism of homotopy algebras of type $\cC$\,.
In other words, 
\begin{defi}
\label{dfn:morphism}
Let $A$ and $B$ be homotopy algebras of type $\cC$
and let $Q_A$ (resp. $Q_B$) be the MC element of 
$\coDer(\cC(A))$ (resp. $\coDer(\cC(B))$) corresponding 
to the homotopy algebra structure on $A$ (resp. on $B$).
Then, an $\infty$-morphism from $A$ to $B$ is 
a homomorphism of dg $\cC$-coalgebras 
\begin{equation}
\label{U-morph}
U ~:~ \Big( \cC(A), \pa + Q_A  \Big) ~\to~ \Big( \cC(B), \pa + Q_B  \Big)\,.
\end{equation}
\end{defi}
Recall that any such homomorphism $U$ is uniquely determined 
by its composition 
\begin{equation}
\label{U-pr}
U' : = p_B \circ U  
\end{equation}
with the canonical projection $p_B : \cC(B) \to B$. 
In this paper, we often use this convention: {\it $U'$ denotes composition 
\eqref{U-pr} corresponding to a homomorphism of
$\cC$-coalgebras $U$ \eqref{U-morph}.}

Given an $\infty$-morphism $U$ from $A_1$ to $A_2$ and 
an $\infty$-morphism $\ti{U}$ from $A_2$ to $A_3$, their composition 
is defined, in the obvious way, as the composition of the corresponding 
homomorphisms of dg $\cC$-coalgebras. We denote by 
$$
\Cat_{\cC}
$$
the category whose objects are homotopy algebras of type $\cC$ (a.k.a. $\Cobar(\cC)$-algebras) 
and whose morphisms are  $\infty$-morphisms with the above obvious composition.

\begin{remark}
\label{rem:why-homot-alg}
Due to \cite[Proposition 38]{MVnado11}, Condition \ref{cond:cC-circ-filtered}
implies that $\Cobar(\cC)$ is a cofibrant object in the closed model category 
of dg operads. The same condition also guarantees
that homotopy algebras of type $\cC$ enjoy the obvious version of 
the Homotopy Transfer Theorem (see \cite[Theorem 10.3.2]{LV}). 
A very concise proof of this important theorem in 
given in Section \ref{sec:HTT} of this paper.   
\end{remark}

\begin{remark}
\label{rem:Markl}
Another way to define the notion of $\infty$-morphism 
of homotopy algebras is to resolve a $2$-colored operad which
governs pairs of algebras with a morphism between them.   
This different approach to the ``higher category'' of 
homotopy algebras is initiated in works \cite{Doubek}, \cite{Markl-h-alg} 
of M. Doubek and M. Markl. 
\end{remark}

\subsection{The convolution $\sLie$-algebra}

Let $V$ be a $\cC$-coalgebra and  $A$ be a homotopy 
algebra of type $\cC$ (i.e. an algebra over $\Cobar(\cC)$)\,.

On the graded vector space
\begin{equation}
\label{Conv-V-A}
\Hom(V, A)
\end{equation}
we define the following multi-brackets: 
\begin{equation}
\label{1-bracket}
\{f\}(v) = \pa_{A}\, f(v) -(-1)^{|f|} f(\pa_{V}\, v)  +  p_A  \circ Q_A\big( 1 \otimes f(\D_1(v)) \big)
\end{equation}
\begin{equation}
\label{m-bracket}
\{f_1, \dots, f_m\} (v) = 
p_A \circ Q_A \big( 1 \otimes f_{1}\otimes \dots \otimes f_{m}  
(\D_m (v))  \big)\,, \qquad m \ge 2\,,
\end{equation}
where $\D_m$ is the $m$-th component of the comultiplication 
$$
\D_m : V \to \Big( \cC(m) \otimes V^{\otimes\, m} \Big)^{S_m}
$$
and $p_A$ is the canonical projection 
$$
p_A : \cC(A) \to A\,.
$$

Note that, since $Q_A$ has degree $1$, each multi-bracket in \eqref{m-bracket} 
also carries degree $1$\,. 

We claim that 
\begin{prop}
\label{prop:Conv-V-A}
For every $\cC$-coalgebra $V$ and a $\Cobar(\cC)$-algebra $A$, 
multi-brackets \eqref{1-bracket}, \eqref{m-bracket} equip the graded vector 
space  $\Hom(V, A)$ with a structure of a $\sLie$-algebra.
\end{prop}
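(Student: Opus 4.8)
The plan is to realize the convolution $\sLie$-algebra as a genuine convolution-type construction and then invoke the general fact that the convolution of a coalgebra over a cooperad with an algebra over the Koszul-type dual carries an $L_\infty$ (here shifted $L_\infty$) structure. Concretely, recall that the operad $\Cobar(\cC)$ is the free operad on the desuspended (pseudo-)cooperad $\bsi \cC_\c$ equipped with a differential built from the cocomposition $\D_\bt$ of $\cC$. Hence a $\Cobar(\cC)$-algebra structure on $A$ is the same as a Maurer--Cartan element $Q_A$ of the dg Lie algebra $\coDer'(\cC(A))$, as recalled just before Definition~\ref{dfn:morphism}. The point is that $\coDer'(\cC(A)) \cong \Hom\big(\cC_\c(A), A\big)$ as graded vector spaces, with the pre-Lie (hence Lie) structure transported from composition of coderivations, and that $\Hom(V,A)$ is a module-like object over this convolution structure in a way that turns $Q_A$ into an $\sLie$-structure on $\Hom(V,A)$. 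So the first step is to write down precisely this identification and the induced operations.

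First I would set up the ``twisting by a cooperad'' formalism: given the pseudo-cooperad $\cC_\c$, the $\cC$-coalgebra $V$, and the cochain complex $A$, one forms for each $m\ge 1$ a natural map
\begin{equation}
\label{eq:plan-brace}
\Hom(V,A)^{\otimes m} \to \Hom\big(\cC_\c(m)\otimes V, A\big)
\end{equation}
by sending $f_1\otimes\cdots\otimes f_m$ to the composite $V \xrightarrow{\D_m} (\cC(m)\otimes V^{\otimes m})^{S_m} \xrightarrow{1\otimes f_1\otimes\cdots\otimes f_m} \cC(m)\otimes A^{\otimes m}$. Composing further with the component $\cC_\c(m)\otimes A^{\otimes m}\to A$ of the MC coderivation $Q_A$ (i.e. applying $p_A\circ Q_A$) gives precisely the multi-bracket $\{f_1,\dots,f_m\}$ of \eqref{m-bracket}, while \eqref{1-bracket} is the unary operation with the two extra terms coming from $\pa_A$ and $\pa_V$. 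So the brackets are not ad hoc: they are the convolution product of the MC element $Q_A$ against elements of $\Hom(V,A)$, symmetrized via the $S_m$-invariance built into $\D_m$ and the Koszul sign rule.

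The second and main step is to verify the $\sLie$-relations. The cleanest route is to recognize that the total bracket $\mu = \sum_{m\ge 1}\{-,\dots,-\}$ is the image of $Q_A$ under a morphism of (filtered, complete) $\sLie$-algebras, or more precisely a pre-Lie/convolution morphism, from $\coDer'(\cC(A))$ to the $\sLie$-algebra of ``coderivation-like'' operations on $\Hom(V,A)$ — indeed $\Hom(V,A)$ with these brackets is, up to the shift conventions of \cite{EnhancedLie}, exactly the convolution $\sLie$-algebra $\Hom_{\cC}\big(V, \coDer'(\cC(A))\big)$ twisted by $Q_A$, so that the $\sLie$-relations on $\Hom(V,A)$ are equivalent to the single statement that $Q_A$ is Maurer--Cartan in $\coDer'(\cC(A))$, which holds by hypothesis (by the characterization of $\Cobar(\cC)$-algebras recalled after \eqref{coder-condition}). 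Unwinding this, the generalized Jacobi identities become the identity $\partial Q_A + \tfrac12[Q_A,Q_A]=0$ read off through the cocomposition $\D_\bt$ of $\cC$; the coassociativity axioms \eqref{Delta-n} for the $\cC$-coalgebra $V$ are what guarantee that applying $Q_A$ twice along a two-nodal-vertex tree $\bt$ decomposes correctly, matching the two-level structure of $[Q_A,Q_A]$. Convergence of the infinite sum of brackets (needed for \eqref{1-bracket}-\eqref{m-bracket} to make sense as an $\sLie$-structure in the filtered sense) follows from Condition~\ref{cond:cC-circ-filtered}: the filtration on $\cC_\c$ induces a complete descending filtration on $\Hom(V,A)$ compatible with the brackets, since $\D_\bt$ respects the filtration via \eqref{D-bt-filtr}, so each element lies in a finite stage and the relevant sums are finite modulo any given filtration level.

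The main obstacle I anticipate is bookkeeping the signs and the symmetrization: one must check that $\{f_1,\dots,f_m\}$ is graded-symmetric in the $f_i$ (this uses the $S_m$-invariance of $\D_m(v)$ together with the Koszul sign rule \eqref{ve-si-vvv}, and the degree-$+1$ shift that makes an $\sLie$ rather than $L_\infty$ bracket symmetric), and that the Jacobi identities come out with the correct signs when the MC equation for $Q_A$ is expanded through $\D_\bt$ for trees $\bt$ with two nodal vertices. This is the routine-but-delicate part; conceptually nothing new happens, and I would organize it by first proving the graded symmetry of each bracket, then proving the quadratic relation $\{\{f\}\}=\text{(correction from }\partial^2=0)$ and the higher relations uniformly by expanding $[Q_A,Q_A]$ along two-level trees, citing \cite[Corollary 5.3]{notes} (equivalently \cite[Proposition 2.15]{GJ}) and the sign conventions of \cite{EnhancedLie} to keep the computation honest rather than re-deriving it from scratch.
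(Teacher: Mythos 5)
Your plan is essentially the paper's own proof (Appendix~\ref{app:proof-Conv}): graded symmetry of each bracket from the $S_m$-invariance of $\D_m$ together with the equivariance of $Q_A$, and the unary and higher Jacobi relations obtained by expanding the Maurer--Cartan equation for $Q'_A = p_A\circ Q_A$ in the convolution Lie algebra $\Conv(\cC_{\c},\End_A)$ along two-level trees, with the coassociativity axioms of the $\cC$-coalgebra $V$ providing the matching of cocompositions. The only caveats are cosmetic: the packaging as a ``twisted convolution $\sLie$-algebra $\Hom_{\cC}(V,\coDer'(\cC(A)))$'' is not well-defined as written (though harmless once you unwind it to the computation you describe), and no convergence or filtration argument is needed for this proposition, since each bracket and each generalized Jacobi identity is a finite expression.
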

The proof of this proposition is given in Appendix \ref{app:proof-Conv}. 

\begin{defi}
\label{dfn:Conv}
Let $V$ be a $\cC$-coalgebra and  $A$ be a homotopy 
algebra of type $\cC$\,. Then $\sLie$-algebra 
\eqref{Conv-V-A} is called the convolution algebra of the pair 
$(V, A)$\,. We use the notation:
$$
\Conv(V,A) : = \Hom(V, A)\,.
$$
\end{defi}

\subsubsection{Convolution $\sLie$-algebra and $\infty$-morphisms}

For a pair $A$, $B$ of  homotopy algebras of type $\cC$, we 
consider the convolution $\sLie$-algebra
\begin{equation}
\label{Conv-cCA-B}
L = \Hom(\cC(A), B)\,,
\end{equation}
where the $\cC$-coalgebra $\cC(A)$ is considered with 
the differential $\pa + Q_{A}$\,, and $\pa$ comes from the 
differential on $A$ and the differential on $\cC$.  

We observe that the $\sLie$-algebra carries the following 
descending filtration
\begin{equation} 
\label{Conv-filter}
\begin{split}
\cF^{\ari}_0 L & \supset \cF^{\ari}_{1} L \supset \cF^{\ari}_{2}L \supset \cdots  \\[0.3cm]
\cF^{\ari}_{n}L & = \{ f  \in \Hom(\cC(A), B) ~\vert ~ 
f \big|_{\cC(m) \otimes_{S_m} A^{\otimes \, m}} =0
~~ \forall ~ m < n  \}.
\end{split}
\end{equation}

It is also easy to check that: 
\begin{prop} 
\label{prop:filtered}
The convolution $\sLie$-algebra structure given by \eqref{1-bracket} and \eqref{m-bracket}
is compatible with filtration \eqref{Conv-filter} i.e.\
\[
\Bigl \{\cF^{\ari}_{i_{1}}L,\cF^{\ari}_{i_{2}}L,\ldots,\cF^{\ari}_{i_{k}}L \Bigr \} \subseteq
\cF^{\ari}_{i_{1} + i_{2} + \cdots + i_{k}} L \quad \forall~~ k >1, 
\]
Moreover, the $\sLie$-algebra $L = \Hom(\cC(A), B)$ is complete 
with respect to this filtration, i.e. 
$$
L =  \varprojlim_{k} L / \cF^{\ari}_{k}L\,. 
$$ 
\qed
\end{prop}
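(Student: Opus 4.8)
The plan is to verify the two assertions separately, both by direct unwinding of the definitions of the multi-brackets \eqref{1-bracket}, \eqref{m-bracket} and the filtration \eqref{Conv-filter}. For the compatibility statement, I would argue as follows. Let $f_1, \dots, f_k \in L$ with $f_j \in \cF^{\ari}_{i_j} L$, and let $v \in \cC(A)$. By definition, the value $\{f_1, \dots, f_k\}(v)$ (for $k \ge 2$; the case $k = 1$ is handled by a separate but analogous argument using the three terms of \eqref{1-bracket}) is computed by applying $p_B \circ Q_B$ to an element of $\cC(k) \otimes B^{\otimes k}$ obtained from $(\id \otimes f_1 \otimes \dots \otimes f_k)(\D_k(v))$. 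The key point is to track, for $v$ lying in the summand $\cC(n) \otimes_{S_n} A^{\otimes n}$ of $\cC(A)$, which summands $\cC(m_j) \otimes_{S_{m_j}} A^{\otimes m_j}$ of $\cC(A)$ can receive a nonzero contribution when we feed the output of $\D_k$ into $f_1 \otimes \dots \otimes f_k$. Since $\D_k$ on $\cC(A)$ is induced from the cooperadic comultiplications of $\cC$ together with the comultiplications $\cC(A) \to \cC(A)^{\otimes k}$ coming from $\cC$-coalgebra structure, the $A$-arities of the $k$ factors split the $A$-arity $n$ of $v$: each factor lands in $\cC(m_j) \otimes_{S_{m_j}} A^{\otimes m_j}$ with $m_1 + \dots + m_k = n$. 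If $f_j$ vanishes on all $\cC(m) \otimes_{S_m} A^{\otimes m}$ with $m < i_j$, then a nonzero contribution forces $m_j \ge i_j$ for every $j$, hence $n = \sum m_j \ge \sum i_j$. Therefore $\{f_1, \dots, f_k\}$ vanishes on $\cC(n) \otimes_{S_n} A^{\otimes n}$ for all $n < i_1 + \dots + i_k$, which is exactly the assertion $\{\cF^{\ari}_{i_1}L, \dots, \cF^{\ari}_{i_k}L\} \subseteq \cF^{\ari}_{i_1 + \dots + i_k} L$.

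For the completeness statement, I would observe that $\cF^{\ari}_n L$ consists of those $f \in \Hom(\cC(A), B)$ that vanish on the sub-object $\bigoplus_{m < n} \cC(m) \otimes_{S_m} A^{\otimes m} \subseteq \cC(A)$. Since $\cC(A) = \bigoplus_{m \ge 0} \cC(m) \otimes_{S_m} A^{\otimes m}$ is a direct sum over arities, a map out of it is the same as a compatible family of maps out of each summand; concretely, $\Hom(\cC(A), B) \cong \prod_{m \ge 0} \Hom\big(\cC(m) \otimes_{S_m} A^{\otimes m}, B\big)$. Under this identification, $\cF^{\ari}_n L$ is precisely the sub-product $\prod_{m \ge n} \Hom\big(\cC(m) \otimes_{S_m} A^{\otimes m}, B\big)$, and the quotient $L / \cF^{\ari}_n L$ is the finite(-index truncation) product $\prod_{m < n} \Hom\big(\cC(m) \otimes_{S_m} A^{\otimes m}, B\big)$. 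The inverse limit $\varprojlim_n L / \cF^{\ari}_n L$ is then manifestly $\prod_{m \ge 0} \Hom\big(\cC(m) \otimes_{S_m} A^{\otimes m}, B\big) \cong L$, and the canonical map $L \to \varprojlim_n L / \cF^{\ari}_n L$ is the obvious isomorphism. (One should note that the differential $\pa + Q_A$ on $\cC(A)$ need not respect the arity grading, since $Q_A$ lowers arity, but it does respect the filtration by $\bigoplus_{m \ge n}$, which is all that is needed for the filtration on the $\sLie$-algebra; this is implicit already in the hypothesis that $Q_A|_A = 0$ and in Condition \ref{cond:cC-circ-filtered}.)

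I do not expect any serious obstacle here; both parts are bookkeeping. The one place that requires a little care is the $k = 1$ case of the bracket compatibility: the bracket $\{f\}$ in \eqref{1-bracket} has three terms, the first two ($\pa_B f$ and $\pm f \pa_{\cC(A)}$) visibly preserve the filtration because $\pa + Q_A$ preserves $\bigoplus_{m \ge n} \cC(m) \otimes_{S_m} A^{\otimes m}$ (the $\pa$ part preserves arity and $Q_A$ only decreases it), while the third term $p_B \circ Q_B(1 \otimes f(\D_1(v)))$ preserves arity in the $\cC(A)$-argument and hence the filtration as well; so $\{\cF^{\ari}_i L\} \subseteq \cF^{\ari}_i L$, consistent with the stated range $k > 1$ for the strict inequality and confirming that $\{-\}$ is a filtered differential. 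The other mild subtlety is keeping the $S_m$-(co)invariants straight when splitting the arity $n$ into $m_1 + \dots + m_k$; this is exactly the combinatorics encoded in the tree formulas \eqref{Delta-bt} and causes no difficulty.
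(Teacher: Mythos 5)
Your proof is correct: the paper states Proposition \ref{prop:filtered} without proof (as an easy check), and your argument — that $\D_k$ on $\cC(A)$ splits the $A$-arity additively, $m_1+\dots+m_k=n$, so vanishing of each $f_j$ below arity $i_j$ forces $\{f_1,\dots,f_k\}$ to vanish below arity $i_1+\dots+i_k$, together with the identification $\Hom(\cC(A),B)\cong\prod_{m\ge 0}\Hom\big(\cC(m)\otimes_{S_m}A^{\otimes m},B\big)$ for completeness — is exactly the intended routine verification. Your side remarks (the $k=1$ case via the fact that $\pa$ preserves and $Q_A$ does not increase arity, and the irrelevance of the $S_m$-invariance and Koszul signs for the vanishing argument) are also accurate.
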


The notion of the convolution $\sLie$-algebra is partially justified by 
the following lemma: 
\begin{lem} 
\label{lem:infty-morph}
Let $A$ and $B$ be homotopy algebras of type $\cC$. If the cooperad 
$\cC$ satisfies condition 
\begin{equation}
\label{cC-reduced}
\cC(0) = \bfzero
\end{equation}
then 
\begin{equation}
\label{Conv-AB-pronilp}
\Hom(\cC(A), B) = \cF^{\ari}_1 \Hom(\cC(A), B)\,. 
\end{equation}
In particular, the $\sLie$-algebra $\Hom(\cC(A), B)$ is pro-nilpotent. 
Furthermore, the assignment 
$$
U ~\mapsto~ U' : = p_{B} \circ U
$$
is a bijection between the set of $\infty$-morphisms from $A$ to $B$ and
the set of MC elements of the $\sLie$-algebra $\Hom(\cC(A), B)$\,.
\end{lem}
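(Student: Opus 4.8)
The plan is to prove the three assertions in order, since each one builds on the previous. First I would establish \eqref{Conv-AB-pronilp}: under the hypothesis $\cC(0) = \bfzero$, the cooperad $\cC$ is reduced, so the cofree $\cC$-coalgebra on $A$ is $\cC(A) = \bigoplus_{m \ge 1} (\cC(m) \otimes A^{\otimes m})_{S_m}$ with the summand for $m = 0$ absent. Hence the defining condition for $\cF^{\ari}_1 \Hom(\cC(A),B)$ — namely that $f$ vanish on $\cC(m) \otimes_{S_m} A^{\otimes m}$ for all $m < 1$, i.e. for $m = 0$ — is vacuous, and \eqref{Conv-AB-pronilp} follows immediately from the definition \eqref{Conv-filter}. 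Combining this with Proposition \ref{prop:filtered}, the $\sLie$-algebra $L = \Hom(\cC(A),B)$ is complete with respect to a filtration whose first term is all of $L$ and which is compatible with the brackets in the sense that a $k$-fold bracket lands in $\cF^{\ari}_k L$; since $\cF^{\ari}_k L$ for $k$ large consists of maps vanishing on lower arity components, this makes $L$ pro-nilpotent in the required sense.

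Next I would identify MC elements of $L$ with $\infty$-morphisms from $A$ to $B$. Recall the general principle (from \cite{notes}, \cite{GJ}, as already invoked in the excerpt) that a degree-one map $U' : \cC(A) \to B$ extends uniquely to a morphism of $\cC$-coalgebras $U : \cC(A) \to \cC(B)$, and conversely every such $U$ is determined by $U' = p_B \circ U$. So as graded objects the set of candidate $\infty$-morphisms is exactly the degree-one part of $\Hom(\cC(A), B)$ — which is where MC elements live. The content is that $U$ is a chain map with respect to $\pa + Q_A$ and $\pa + Q_B$ if and only if $U'$ satisfies the Maurer–Cartan equation for the brackets \eqref{1-bracket}, \eqref{m-bracket}. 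I would unwind the condition that $U$ commutes with the codifferentials, project along $p_B$, and use the coalgebra compatibility to rewrite everything in terms of the comultiplications $\D_m$ of $\cC(A)$ and the structure map $Q_B$ on $B$; the linear term $\{U'\}$ reproduces the $\pa$-part together with the $Q_A$-contribution through $\D_1$, and the $m$-fold bracket $\{U', \dots, U'\}$ reproduces the arity-$m$ part of $Q_B \circ (U')^{\otimes m} \circ \D_m$. Because $\cC$ is reduced the comultiplication is ``connected'' (the $\D_0$ piece is absent), so the sum $\sum_{m \ge 1} \frac{1}{m!}\{U', \dots, U'\}$ is exactly the nonlinear part of the chain-map condition, and matching the two sides term by term gives the desired bijection. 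Finally I would note well-definedness: the sum defining the MC equation converges precisely because of the completeness established in the first part — on any fixed element of $\cC(A)$ only finitely many brackets contribute, by the arity filtration.

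The step I expect to be the main obstacle is the bookkeeping in the second part: writing out explicitly that the coalgebra-morphism condition on $U$ is equivalent, after projecting, to the MC equation for the brackets. This requires care with the Koszul signs (the conventions \eqref{ve-si-vvv}), with the identification \eqref{coinvar-invar} between invariants and coinvariants when passing between $\cC(A)$ and $\cC(A)^{\inv}$, and with keeping track of how $Q_A$ enters only through $\D_1$ in the unary bracket (this is the reason the formula \eqref{1-bracket} has the asymmetric-looking $p_A \circ Q_A(1 \otimes f(\D_1(v)))$ term) versus how $Q_B$ enters the higher brackets. I would organize this by introducing, as is standard, the ``tensor trick'': pass from $U' : \cC(A) \to B$ to the induced map of cofree coalgebras and observe that $\coDer'$-valued computations on $\cC(B)$ pull back to bracket computations on $\Hom(\cC(A), B)$; this reduces the sign-chasing to a single bilinear identity for the pre-Lie (brace) product underlying the convolution $\sLie$-structure. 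Since this verification is largely parallel to the classical $L_\infty$ case treated in \cite{DotsPoncin} and \cite{LV}, I would either cite it or relegate the routine computation, as the authors do for Proposition \ref{prop:Conv-V-A}, to an appendix.
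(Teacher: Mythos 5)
Your proposal follows essentially the same route as the paper's proof: \eqref{Conv-AB-pronilp} is immediate from the definition of the arity filtration once $\cC(0)=\bfzero$, completeness then gives pro-nilpotence, and the bijection is obtained by using cofreeness of $\cC(B)$ to reduce $U$ to $U'=p_B\circ U$ and checking that compatibility of $U$ with the differentials $\pa+Q_A$, $\pa+Q_B$ unwinds (via the comultiplications $\D_m$ and the invariants/coinvariants identification, which produces the $1/k!$ factors) to exactly the MC equation for $U'$. The only slip is a degree convention: in the shifted ($\sLie$) setting used here, $\infty$-morphisms correspond to degree-\emph{zero} maps $U'\in\Hom(\cC(A),B)$, which is where MC elements of a $\sLie$-algebra live, not the degree-one part as you wrote.
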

\begin{proof}
The first statement of the lemma follows directly from 
the definition of filtration \eqref{Conv-filter} on the 
$\sLie$-algebra $\Hom(\cC(A), B)$. Since  $\Hom(\cC(A), B)$ is complete 
with respect to this filtration, we conclude that the $\sLie$-algebra $\Hom(\cC(A), B)$
is pronilpotent. 

This conclusion allows us to write the MC equation 
in the $\sLie$-algebra $\Hom(\cC(A), B)$ for any degree zero element. 

According to Definition \ref{dfn:morphism}, an  $\infty$-morphism from 
$A$ to $B$ is a homomorphism of dg $\cC$-coalgebras 
$$
U ~:~ \Big( \cC(A), \pa + Q_A  \Big) ~\to~ \Big( \cC(B), \pa + Q_B  \Big)\,.
$$ 
Since the $\cC$-coalgebra $\cC(B)$ is cofree (over graded vector spaces), 
the homomorphism $U$ is uniquely determined by its composition 
\begin{equation}
\label{pB-U}
U' : = p_{B} \circ U :   \cC(A) \to B\,.
\end{equation}
Furthermore, the compatibility of $U$ with the differentials $\pa  + Q_A$
and $\pa + Q_B$ is equivalent to 
the equation 
\begin{multline}
\label{U-and-diffs}
\pa \circ U' (X; a_1, \dots, a_m) \\ 
 + Q_B \circ (1 \otimes U') \circ \D_1 (X; a_1, \dots, a_m) -  U' \circ (\pa + Q_A) (X; a_1, \dots, a_m)    \\
+ \sum_{k=2}^{\infty} \frac{1}{k!} Q_B \circ (1 \otimes (U')^{\otimes k})  
\circ \D_k (X; a_1, \dots, a_m) = 0\,,    
\end{multline}
where $(X; a_1, \dots, a_m)$ represents a vector in $\cC(A)$ and the factor 
$1/k!$ in the last sum comes from the identification of $\cC(B)^{\inv}$ with 
$\cC(B)$ via the inverse of isomorphism \eqref{coinvar-invar}. 

Using the definition of multi-brackets \eqref{1-bracket}, \eqref{m-bracket} on  
$\Hom(\cC(A), B)$, we see that \eqref{U-and-diffs} is precisely the MC equation 
for $U'$ in the $\sLie$-algebra $\Hom(\cC(A), B)$\,.
Thus 
$$
U ~~ \leftrightarrow ~~ U' : =  p_{B} \circ U
$$
is a desired bijection between the set of $\infty$-morphisms from $A$ to $B$
and the set of MC elements of the $\sLie$-algebra $\Hom(\cC(A), B)$\,.
\end{proof}
\begin{remark}
\label{rem:Dots-Poncin}
A version of Lemma \ref{lem:infty-morph} is proved in \cite{DotsPoncin}.
See Proposition 3 in \cite[Section 1.3]{DotsPoncin}.
\end{remark}
\begin{example}
\label{exam:A-infinity}
Let us recall that $\coAs$ is the cooperad which governs coassociative coalgebras 
without counit and the dg operad $\Cobar(\mS^{-1} \coAs)$ governs (flat) $A_{\infty}$-algebras. 
It is easy to see that, for every $A_{\infty}$-algebra $A$,
\begin{equation}
\label{Conv-cC-A-A}
\Hom(\mS^{-1} \coAs(A),  A )
\end{equation}
is the completed version of the truncated Hochschild cochain complex of $A$
\begin{equation}
\label{Hoch-A}
\prod_{n \ge 1} \bs^{n-1} \Hom(A^{\otimes \, n}, A)
\end{equation}
and the shifted $L_{\infty}$-algebra on \eqref{Hoch-A} is obtain by 
symmetrizing the cup product and its higher analogues. It is worthy of mentioning that 
the Hochschild differential on \eqref{Hoch-A} is obtained by twisting the differential 
on \eqref{Conv-cC-A-A} by the MC element corresponding to the identity map $\id: A \to A$. 
\end{example}

\section{The $\tLie$-enriched category $\HoAlg_{\cC}$ and the simplicial category  $\HoAlg^{\sD}_{\cC}$}
\label{sec:HoAlg-cC}

\subsection{A brief reminder of the symmetric monoidal category $\tLie$}
\label{sec:tLie}

Let us recall \cite{EnhancedLie} that a $\sLie$-algebra $(L, \pa, \{\cdot,\cdot\},
\{\cdot,\cdot,\cdot\},\ldots)$ is \emph{filtered} if the underlying
complex $(L, \pa)$ is equipped with a complete descending filtration,
\begin{equation}
\label{filtr-L}
L = \cF_{1}L \supset \cF_{2}L \supset  \cF_{3}L  \cdots
\end{equation}
\begin{equation}
\label{L-complete}
L =\varprojlim_{k} L/\cF_{k}L\,,
\end{equation}
which is compatible with the brackets, i.e.
\[
\Bigl \{\cF_{i_{1}}L,\cF_{i_{2}}L,\ldots,\cF_{i_{m}}L \Bigr \} \subseteq
\cF_{i_{1} + i_{2} + \cdots + i_{m}} L \quad \forall ~~ m >1.
\]

For example, if $A$ and $B$ are $\Cobar(\cC)$-algebras and the cooperad 
$\cC$ satisfies the condition $\cC(0) = \bfzero$, then the filtration ``by arity''
\eqref{Conv-filter} on $\Hom(\cC(A), B)$ satisfies the above conditions. 
In other words,  $\Hom(\cC(A), B)$ is a filtered $\sLie$-algebra. 

The filtration \eqref{filtr-L} induces a natural descending 
filtration and hence a topology on $\und{S}(L)$. Just as in \cite{EnhancedLie}, 
we tacitly assume that $\infty$-morphisms of filtered $\sLie$-algebras 
are continuous with respect to this topology. 

Let us also recall \cite{EnhancedLie} that an \emph{enhanced morphism} 
\[
L_1 \xto{(\al, F)} L_2
\]
between filtered $\sLie$-algebras is a pair consisting of a MC element
$\alpha \in L_2$ and a (continuous) $\infty$-morphism
$F \maps L_1 \to L^{\alpha}_2$, where $L^{\alpha}_2$ is obtained from 
$L_2$ via twisting\footnote{See \cite[Section 2]{EnhancedLie} for details on twisting 
by MC elements.} by the MC element $\al$. 
The composition of two enhanced morphisms $L_1 \xto{(\al_2, F)} L_2$
and $L_2 \xto{(\al_3, G)} L_3$ is the pair
\begin{equation}
\label{composition}
(\al_3 + G_*(\al_2), G^{\al_2} \circ F)\,, 
\end{equation}
where the $\infty$-morphism $G^{\al_2}$ is obtained from 
$G$ via twisting by the MC element $\al_2 \in L_2$.

Following \cite{EnhancedLie}, we denote by $\tLie$ the 
category of filtered $\sLie$-algebras with the above enhanced 
morphisms. 

Given two filtered $\sLie$ algebras $(L, \{\cdot,\cdot\},
\{\cdot,\cdot, \cdot\},\ldots)$ and $(\wt{L}, \{\cdot,\cdot \}\wt{~},
\{\cdot,\cdot, \cdot\}\wt{~},\ldots)$, one obtains a
filtered $\sLie$ structure on the direct sum $L \oplus \wt{L}$ 
by setting
\[
\{x_1 + x'_1, x_2 + x'_2,\ldots, x_k + x'_k\} :=
\{x_1, x_2,\ldots, x_k\} + \{x'_1, x'_2,\ldots, x'_k\}\wt{~}\,,
\] 
and 
$$
\cF_k  (L \oplus \wt{L}) : = (\cF_k L) \oplus  (\cF_k \wt{L})\,.
$$

If $\al$ and $\wt{\al}$ are MC elements of $L$ and $\wt{L}$, respectively, 
then $\al + \wt{\al} \in L \oplus \wt{L}$ is clearly a MC element of 
the $\sLie$-algebra $L\oplus \wt{L}$. Furthermore, the operation of 
twisting (by a MC element) is compatible with $\oplus$, i.e. the $\sLie$-algebra
$L^{\al} \oplus \wt{L}^{\wt{\al}}$ is canonically isomorphic to the $\sLie$-algebra 
$(L\oplus \wt{L})^{\al + \wt{\al}}$\,.

Using these observations, we show, in \cite[Section 3.1]{EnhancedLie}, that 
the assignment 
$$
(L, \wt{L}) ~\mapsto~ L \oplus \wt{L}
$$
can be upgraded to a structure of a symmetric monoidal category on $\tLie$ 
with $\bfzero$ being the unit object.

\subsection{The $\tLie$-enriched category of $\Cobar(\cC)$-algebras}
Given two $\Cobar(\cC)$-algebras $A$ and $B$\,, we denote by 
\begin{equation}
\label{Map-A-B}
\map(A, B) : =  \Hom(\cC(A), B)
\end{equation}
the convolution $\sLie$-algebra defined in Proposition \ref{prop:Conv-V-A}. 

In this section, we construct a $\tLie$-enriched category \cite{EnhancedLie}, \cite{Kelly} $\HoAlg_{\cC}$
whose objects are homotopy algebras $A,B, \dots$ of type $\cC$ and whose 
mapping spaces are $\sLie$-algebras \eqref{Map-A-B}.  

We start with defining a degree $0$ linear map
\begin{equation} 
\label{comp_map}
\begin{split}
\cU' : \und{S}\big( \Hom(\cC(A_2), A_3) \oplus  \Hom(\cC(A_1), A_2)  \big) 
\to  \Hom(\cC(A_1), A_3)
\end{split}
\end{equation}
by using the identification 
\begin{align*}
\und{S}\bigl( \Hom(\cC(A_2), A_3) \oplus \Hom(\cC(A_1), A_2)  \bigr)
 & = \und{S}\bigl(\Hom(\cC(A_2), A_3) \bigr) \, \oplus \,
 \und{S}\bigl(\Hom(\cC(A_1), A_2) \bigr )
\\  & \quad \oplus \,
\Big( \und{S}\bigl( \Hom(\cC(A_2), A_3) \bigr) \tensor \und{S}\bigl( \Hom(\cC(A_1), A_2) \bigr) \Big)
\end{align*}
and the formulas: 
\begin{equation}
\label{cU-pr-nonzero}
\cU' \big( g \otimes (f_1 \dots f_n) \big) (X) = 
g \big( 1 \otimes f_{1} \otimes \dots \otimes f_{n} \, (\D_n(X)) \big)\,,   
\end{equation}
for all $X \in \cC(A_1)$, $g \in \Hom(\cC(A_2), A_3)$, 
$f_1, \dots, f_n \in \Hom(\cC(A_1), A_2)$,
and
\begin{equation} 
\label{cU-pr-zero}
\begin{array}{c}
\cU' \big|_{ \und{S}(\Hom(\cC(A_2), A_3)) } ~ = ~ 
\cU' \big|_{ \und{S}(\Hom(\cC(A_1), A_2)) }  ~ = ~ 0\,, \\[0.5cm]
\cU' \big|_{ \und{S}^{m \neq 1} ( \Hom(\cC(A_2), A_3) ) \otimes
\und{S} (\Hom(\cC(A_1), A_2) ) }   ~ = ~ 0\,.
\end{array}
\end{equation}

We claim that 
\begin{prop} 
\label{prop:comp}
The vector $\cU'$ defined by the above formulas 
is a MC element of the $\sLie$-algebra 
\begin{equation}
\label{Conv-for-composition}
\Hom \Big(\und{S} \big( \Hom(\cC(A_2), A_3) \oplus \Hom(\cC(A_1), A_2) 
\big) \,,\,
 \Hom(\cC(A_1), A_3) \Big)\,.
\end{equation}
\end{prop}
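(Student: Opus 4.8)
The plan is to recast the target \eqref{Conv-for-composition} so that Lemma~\ref{lem:infty-morph} becomes applicable. Set $L := \Hom(\cC(A_2), A_3) \oplus \Hom(\cC(A_1), A_2)$, equipped with the direct-sum filtered $\sLie$-structure recalled in Section~\ref{sec:tLie}; as a $\Cobar(\coCom)$-algebra it is a shifted $L_\infty$-algebra, and since $\coCom(L) = \und S(L)$, the $\sLie$-algebra \eqref{Conv-for-composition} is precisely the convolution $\sLie$-algebra $\Hom\bigl(\coCom(L),\Hom(\cC(A_1),A_3)\bigr)$ attached, as in \eqref{Conv-cCA-B}, to the pair of $\Cobar(\coCom)$-algebras $L$ and $\Hom(\cC(A_1),A_3)$ (here $\coCom(L)$ carries the differential $\pa + Q_L$ encoding the $\sLie$-structure on $L$). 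Since $\coCom(0)=\bfzero$, Lemma~\ref{lem:infty-morph} then tells us this $\sLie$-algebra is pro-nilpotent and that its Maurer--Cartan elements correspond bijectively, via $U \mapsto U' = p\circ U$, to $\infty$-morphisms of $\sLie$-algebras $L \to \Hom(\cC(A_1),A_3)$. So it suffices to show that the Taylor coefficients recorded in \eqref{cU-pr-nonzero}--\eqref{cU-pr-zero} assemble into such an $\infty$-morphism $\Phi$.

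The conceptual content of $\Phi$ is composition of $\cC$-coalgebra morphisms. For $f \in \Hom(\cC(A_1),A_2)$ let $\widehat f \maps \cC(A_1) \to \cC(A_2)$ denote its unique lift to a morphism of graded (not necessarily dg) $\cC$-coalgebras, determined by $p_{A_2}\circ\widehat f = f$; the components of $\widehat f$ on $\cC(m)\otimes_{S_m}A_1^{\otimes m}$ are assembled from those of $f$ by the very recipe in \eqref{cU-pr-nonzero}, and similarly for $g \in \Hom(\cC(A_2),A_3)$. One checks that $\Phi$ sends a degree-zero element $x = V' + U' \in L$ to $p_{A_3}\circ\widehat{V'}\circ\widehat{U'} \in \Hom(\cC(A_1),A_3)$: by \eqref{cU-pr-zero} only the mixed components $\cU'\big(g\otimes(f_1\cdots f_n)\big)$ with a single $\Hom(\cC(A_2),A_3)$-input survive, and resumming these over $n$ reproduces exactly the corestriction of $\widehat{V'}\circ\widehat{U'}$. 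Thus the claim ``$\cU'$ is Maurer--Cartan'' is the off-shell, input-by-input multilinear (``polarized'') counterpart of the familiar facts that a composite of $\infty$-morphisms is an $\infty$-morphism and, more generally (cf.\ \eqref{composition}), that composition is compatible with twisting by Maurer--Cartan elements -- which is exactly the datum turning the coefficients \eqref{cU-pr-nonzero} into a morphism of $\sLie$-algebras.

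To turn this into a proof I would verify the Maurer--Cartan equation
\[
\{\cU'\} + \sum_{m\ge 2}\frac{1}{m!}\{\cU',\dots,\cU'\} = 0
\]
in $\Hom\bigl(\coCom(L),\Hom(\cC(A_1),A_3)\bigr)$ directly. The crucial reduction is that $\cU'$ is supported in arity $\ge 2$ (by \eqref{cU-pr-zero}), hence the $m$-fold bracket is supported in arity $\ge 2m$; so at each fixed arity only finitely many terms contribute and convergence of the series follows from Proposition~\ref{prop:filtered}. Evaluating the arity-$N$ component on a generator $g\otimes(f_1\cdots f_n)$ and expanding the brackets via \eqref{1-bracket}--\eqref{m-bracket}, one cancels terms in pairs using: (i) coassociativity of the comultiplications $\D_\bullet$ on $\cC(A_1)$ together with their compatibility with the differential $\pa + Q_{A_1}$; (ii) the Maurer--Cartan equations for $Q_{A_1}$, $Q_{A_2}$, $Q_{A_3}$ in $\coDer(\cC(A_1))$, $\coDer(\cC(A_2))$, $\coDer(\cC(A_3))$; and (iii) the fact that each $\widehat{f_i}$ intertwines the comultiplications. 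This is the very cancellation that shows composition of \emph{dg} $\cC$-coalgebra morphisms to be differential-preserving and associative; the only new feature is that $g, f_1,\dots,f_n$ are now arbitrary, so one must track the ``defect'' contributions of $Q_{A_2}$ that arise when $\widehat{f_i}$ fails to be a dg morphism, and these are exactly absorbed by the terms of the direct-sum $\sLie$-bracket on $L$ coming from the $\Hom(\cC(A_1),A_2)$-summand (which enter $\{\cU'\}$ through $\pa_{\coCom(L)}$).

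The main obstacle is organizational rather than conceptual: keeping the Koszul signs consistent with the symmetrization factors $1/m!$, and in particular bookkeeping how the (deconcatenation-type) comultiplication on $\und S(L)$ distributes the single $\Hom(\cC(A_2),A_3)$-valued input against the symmetric block $f_1,\dots,f_n$ across the several copies of $\cU'$ in an $m$-fold bracket. A way to circumvent most of this, if one prefers, is to observe that $\Phi$ manifestly induces, for every pro-nilpotent commutative dg algebra $\mathfrak m$, a natural transformation $\MC(L\hotimes\mathfrak m)\to\MC(\Hom(\cC(A_1),A_3)\hotimes\mathfrak m)$ -- this is simply composition of $\infty$-morphisms over the base $\mathfrak m$ -- and then to extract the $\infty$-morphism equation by feeding in a ``generic'' Maurer--Cartan element; the cost there is making the universal element precise in the present, possibly infinite-dimensional and merely pro-nilpotent, situation. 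I expect the paper to favor the direct computation, streamlined via the coalgebra-morphism interpretation of $\cU'$.
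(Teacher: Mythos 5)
Your overall strategy---verify the Maurer--Cartan equation \eqref{MC-U-pr} for $\cU'$ directly, guided by the interpretation of $\cU'$ as the polarization of composition with the coalgebra-morphism lifts $\widehat f$---is the same as the paper's, and your remarks on convergence (support of $\cU'$ in arity $\ge 2$ together with Proposition \ref{prop:filtered}) are fine. The gap is in the verification itself: you only evaluate the equation on generators $g\otimes(f_1\cdots f_n)$ containing a \emph{single} factor from $\Hom(\cC(A_2),A_3)$. On such inputs every term of the series $\sum_{m\ge 2}\frac{1}{m!}\{\cU',\dots,\cU'\}_m$ vanishes, because the $\coCom$-comultiplication distributes the inputs into $m\ge 2$ nonempty blocks and, by \eqref{cU-pr-zero}, any block without exactly one $\Hom(\cC(A_2),A_3)$-input kills the corresponding copy of $\cU'$. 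So what you actually check is only the ``derivation-type'' part of \eqref{MC-U-pr} (the terms with $d_{13}$, $d_{12}$, $d_{23}$, $Q_{12}$), consistent with the fact that your cancellation mechanism only invokes $Q_{A_2}$-defects absorbed by the brackets of the $\Hom(\cC(A_1),A_2)$-summand. The components of the MC equation on generators $g_1\cdots g_m\, f_1\cdots f_n$ with $m\ge 2$ factors from $\Hom(\cC(A_2),A_3)$ are never addressed; there the equation reduces to the identity
\begin{equation*}
\tfrac{1}{m!}\{\cU',\dots,\cU'\}_m(g_1,\dots,g_m,f_1,\dots,f_n)
=\cU'\bigl(\{g_1,\dots,g_m\}_m,\, f_1,\dots,f_n\bigr)\,,
\end{equation*}
where $\{\cdot,\dots,\cdot\}_m$ is the $m$-bracket of $\Hom(\cC(A_2),A_3)$, i.e.\ to the cancellation of the higher-bracket sum against $\cU'\circ(Q_{23}\otimes 1)$. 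That family of identities is precisely the ``bulky part'' to which the paper devotes its entire displayed computation, via the cooperadic coassociativity identity \eqref{Delta-Delta-X} for the trees $\bt^{\pf}_{k_1,\dots,k_m}$ and the $S_n$-invariance of $\D_n$, which allows the resummation over shuffles in \eqref{UUU-brack-unfold}--\eqref{UUU-brack-OK}. Without it, the proposition---which is exactly the statement that your map is compatible with the brackets built from $Q_{A_3}$, not merely with the differentials and the $L_{12}$-brackets---is unproven.

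The conceptual reductions do not fill this hole. Invoking Lemma \ref{lem:infty-morph} merely rephrases ``$\cU'$ is MC'' as ``the coefficients \eqref{cU-pr-nonzero}, \eqref{cU-pr-zero} define an $\infty$-morphism,'' so no work is saved; and the familiar facts that composites of $\infty$-morphisms are $\infty$-morphisms, or that composition is compatible with twisting, only constrain the value of the would-be morphism on actual MC elements, whereas the proposition is the stronger off-shell, multilinear statement---as you yourself note. Your fallback (naturality of composition over all pro-nilpotent bases $\mathfrak{m}$ plus a ``generic'' MC element) is the right kind of polarization argument, but it is exactly the step you leave imprecise, and making it precise essentially forces one to redo the multi-input computation anyway. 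To complete the proof, carry out the $m\ge 2$ cancellation explicitly along the lines of the paper's argument.
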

\begin{proof}
Let us denote by $L_{ij}$ the $\sLie$-algebra $\Hom(\cC(A_i), A_j)$
and by $d_{ij}$ the differential on $L_{ij}$. We also denote by 
$Q_{ij}$ the degree $1$ coderivation on the $\coCom$-coalgebra 
\begin{equation}
\label{und-S-Lij}
\und{S} \big( L_{ij} \big) 
\end{equation}
corresponding to the  $\sLie$-algebra $L_{ij}$\,. 
By abuse of notation, $d_{ij}$ also denotes the differential on 
\eqref{und-S-Lij} coming from the one on $L_{ij}$. 

In terms of this notation, our goal is to prove that $\cU'$ satisfies 
the equation\footnote{We sometimes use the subscript $m$ in $\{~,~,\dots, ~\}_m$ 
to denote the number of entries of the corresponding multi-bracket.}   
\begin{equation}
\label{MC-U-pr}
d_{13} \circ \cU' - \cU' \circ \big( (d_{23} + Q_{23}) \otimes 1 + 1 \otimes (d_{12} +Q_{12})  \big) +
\sum_{m=2}^{\infty} \frac{1}{m!} \{\cU', \cU', \dots, \cU' \}_m = 0\,.
\end{equation}

We will present the most bulky part of the proof of \eqref{MC-U-pr}.
Namely, we will show in detail that the sum
\begin{equation}
\label{sum-UUU-brack}
\sum_{m=2}^{\infty} \frac{1}{m!} \{\cU', \cU', \dots, \cU' \}_m 
\end{equation}
cancels with the term $-\cU' \circ (Q_{23} \otimes 1)$ in \eqref{MC-U-pr}. 
The remaining cancellations are much more straightforward and 
we leave them to the reader.

In the computations given below, we do not specify explicitly sign factors
coming from the Koszul sign rule. We address this 
issue by a short comment at the end of the proof.  

Let $g_1, \dots, g_k \in L_{23}$ and $f_1, \dots, f_n \in L_{12}$. 
Due to \eqref{cU-pr-zero}, 
$$
\frac{1}{m!} \{\cU', \cU', \dots, \cU' \}_m (g_1, \dots, g_k, f_1, \dots, f_n) = 0
$$
if $k \neq m$ or $n < k$\,. 

Unfolding the term 
\begin{equation}
\label{UUU-brack}
\frac{1}{m!} \{\cU', \cU', \dots, \cU' \}_m (g_1, \dots, g_m, f_1, \dots, f_n)
\end{equation}
with $n \ge m$, we get 
\begin{multline}
\label{UUU-brack-unfold}
\frac{1}{m!} \{\cU', \cU', \dots, \cU' \}_m (g_1, \dots, g_m, f_1, \dots, f_n) = \\
\sum_{\substack{k_1 + \dots + k_m = n \\[0.1cm] k_j \ge 1}}~
\sum_{\tau \in S_m} 
\sum_{\si \in \Sh_{k_1, k_2, \dots, k_m}} \frac{\pm 1}{m!}
\big\{
\cU'(g_{\tau(1)}, f_{\si(1)}, \dots, f_{\si(k_1)}), 
\cU'(g_{\tau(2)}, f_{\si(k_1+1)}, \dots, f_{\si(k_1+k_2)}),
\dots  \\[0.1cm]
\dots \cU'(g_{\tau(m)}, f_{\si(n-k_m+1)}, \dots, f_{\si(n)})  \big\}^{L_{13}}_m\,,
\end{multline}
where $\{~, ~, \dots, ~\}^{L_{13}}_m$ denotes the corresponding multi-bracket
on $L_{13}$ and the sign factors in the right hand side are determined by the Koszul rule.

Since  $\{~, ~, \dots, ~\}^{L_{13}}_m$ is (graded) symmetric in its argument, we can simplify 
term \eqref{UUU-brack} further: 
\begin{multline}
\label{UUU-brack-better}
\frac{1}{m!} \{\cU', \cU', \dots, \cU' \}_m (g_1, \dots, g_m, f_1, \dots, f_n) = \\
\sum_{\substack{k_1 + \dots + k_m = n \\[0.1cm] k_j \ge 1}}~
\sum_{\si \in \Sh_{k_1, k_2, \dots, k_m}} \pm
\big\{
\cU'(g_{1}, f_{\si(1)}, \dots, f_{\si(k_1)}), 
\cU'(g_{2}, f_{\si(k_1+1)}, \dots, f_{\si(k_1+k_2)}),
\dots  \\[0.1cm]
\dots \cU'(g_{m}, f_{\si(n-k_m+1)}, \dots, f_{\si(n)})  \big\}^{L_{13}}_m\,.
\end{multline}

Let $X \in \cC(A_1)$ and 
\begin{equation}
\label{Delta-X}
\D_m (X) = \sum_{\al} (\ga_{\al}; X_{\al,1}, X_{\al, 2}, \dots, X_{\al, m})  \in \big( \cC(m) \otimes \cC(A_1)^{\otimes\, m} \big)^{S_m}\,.  
\end{equation}

Then, applying equation \eqref{UUU-brack-better} and using the definition of the 
multi-bracket on $L_{13} = \Hom( \cC(A_1), A_3)$, we get 
\begin{multline}
\label{UUU-brack-applied}
\frac{1}{m!} \{\cU', \cU', \dots, \cU' \}_m (g_1, \dots, g_m, f_1, \dots, f_n) (X) = \\[0.1cm]
\sum_{\al}
\sum_{\substack{k_1 + \dots + k_m = n \\[0.1cm] k_j \ge 1}}~
\sum_{\si \in \Sh_{k_1, k_2, \dots, k_m}} 
\pm 
Q_{A_3} 
\big( \ga_{\al} ; 
\cU'(g_{1}, f_{\si(1)}, \dots, f_{\si(k_1)}) (X_{\al,1}),   \\[0.1cm]
\cU'(g_{2}, f_{\si(k_1+1)}, \dots, f_{\si(k_1 + k_2)}) (X_{\al, 2}), 
\dots,  
\cU'(g_{m}, f_{\si(n-k_m+1)}, \dots, f_{\si(n)}) (X_{\al, m}) \big)\,,
\end{multline}
where $Q_{A_3}$ denotes the coderivation of $\cC(A_3)$ corresponding 
to the $\Cobar(\cC)$-algebra structure on $A_3$. 

Using \eqref{cU-pr-nonzero}, we deduce that 
\begin{multline}
\label{UUU-brack1}
\frac{1}{m!} \{\cU', \cU', \dots, \cU' \}_m (g_1, \dots, g_m, f_1, \dots, f_n) (X) = \\[0.1cm]
\sum_{\substack{k_1 + \dots + k_m = n \\[0.1cm] k_j \ge 1}}~
\sum_{\si \in \Sh_{k_1, k_2, \dots, k_m}} 
\pm Q_{A_3} \circ
(1 \otimes g_{1}\otimes f_{\si(1)} \otimes \dots \otimes f_{\si(k_1)}
\otimes \dots \otimes g_{m} \otimes  
f_{\si(n-k_m+1)} \otimes \dots \otimes f_{\si(n)})  \\[0.1cm] 
\circ (1 \otimes \D_{k_1} \otimes \D_{k_2} \otimes \dots \otimes \D_{k_m} ) \circ \D_m (X)\,.
\end{multline}

By the axioms of the $\cC$-coalgebra structure on $\cC(A_1)$, we have 
\begin{equation}
\label{Delta-Delta-X}
(1 \otimes \D_{k_1} \otimes \D_{k_2} \otimes \dots \otimes \D_{k_m} ) \circ \D_m (X) = 
\mb \circ (\D_{\bt^{\pf}_{k_1, \dots, k_m}} \otimes 1^{\otimes\, n} )  \circ  \D_n(X)\,, 
\end{equation}
where $n = k_1 + k_2 + \dots + k_m$, 
$\D_{\bt^{\pf}_{k_1, \dots, k_m}}$ is the cooperadic comultiplication 
$$
\D_{\bt^{\pf}_{k_1, \dots, k_m}} : \cC(n) \to \cC(m) \otimes \cC(k_1) \otimes 
\cC(k_2) \otimes \dots \otimes \cC(k_m)
$$
corresponding to the planar tree $\bt^{\pf}_{k_1, \dots, k_m}$ shown on 
figure \ref{fig:fork-kkk}, and $\mb$ is the braiding isomorphism which 
``changes the positions'' of tensor factors appropriately.  
\begin{figure}[htp]
\centering 
\begin{tikzpicture}[scale=0.5]
\tikzstyle{w}=[circle, draw, minimum size=3, inner sep=1]
\tikzstyle{b}=[circle, draw, fill, minimum size=3, inner sep=1]
\node[b] (l1) at (0, 8) {};
\draw (0,8.6) node[anchor=center] {{\small $1$}};
\draw (1.5,8) node[anchor=center] {{\small $\dots$}};
\node[b] (lk1) at (3, 8) {};
\draw (3,8.6) node[anchor=center] {{\small $k_1$}};
\node[w] (n2) at (1.5, 6) {};
\node[b] (lk11) at (6, 8) {};
\draw (6,8.6) node[anchor=center] {{\small $k_1+1$}};
\draw (7.5,8) node[anchor=center] {{\small $\dots$}};
\node[b] (lk1k2) at (9, 8) {};
\draw (9.2,8.6) node[anchor=center] {{\small $k_1+k_2$}};
\node[w] (n3) at (7.5, 6) {};
\draw (11.3,7) node[anchor=center] {{\small $\dots$}};
\node[b] (lnkm1) at (14, 8) {};
\draw (13.6,8.6) node[anchor=center] {{\small $n-k_m+1$}};
\draw (15.5,8) node[anchor=center] {{\small $\dots$}};
\node[b] (ln) at (17, 8) {};
\draw (17,8.6) node[anchor=center] {{\small $n$}};
\node[w] (nm1) at (15.5, 6) {};
\node[w] (n1) at (8.5, 3) {};
\node[b] (r) at (8.5, 2) {};
\draw (n2) edge (l1);
\draw (n2) edge (lk1);
\draw (n3) edge (lk11);
\draw (n3) edge (lk1k2);
\draw (nm1) edge (lnkm1);
\draw (nm1) edge (ln);
\draw (n1) edge (n2);
\draw (n1) edge (n3);
\draw (n1) edge (nm1);
\draw (r) edge (n1);
\end{tikzpicture}
\caption{The labeled planar tree $\bt^{\pf}_{k_1, \dots, k_m}$} 
\label{fig:fork-kkk}
\end{figure}

Unfolding $\D_n (X)$ 
$$
\D_n(X) =  \sum_{\beta} (\ti{\ga}_{\beta}; \ti{X}_{\beta, 1}, \dots, \ti{X}_{\beta, n})
$$ 
and using the fact that $\D_n$ lands in the space of $S_n$-invariants, we rewrite 
the expression $(\D_{\bt^{\pf}_{k_1, \dots, k_m}} \otimes 1^{\otimes\, n} )  \circ  \D_n(X)$
as follows: 
\begin{multline}
\label{compute-Delta-Delta}
(\D_{\bt^{\pf}_{k_1, \dots, k_m}} \otimes 1^{\otimes\, n} )  \circ  \D_n(X) =  \\[0.1cm]
\sum_{\beta} \pm
(\D_{\bt^{\pf}_{k_1, \dots, k_m} } \otimes 1^{\otimes\, n} ) 
( \si^{-1} (\ti{\ga}_{\beta}); \ti{X}_{\beta, \si(1)}, \dots, \ti{X}_{\beta, \si(n)}) 
=   \\[0.1cm]
\sum_{\beta} \pm
(\D_{\si(\bt^{\pf}_{k_1, \dots, k_m}) } \otimes 1^{\otimes\, n} ) 
( \ti{\ga}_{\beta}; \ti{X}_{\beta, \si(1)}, \dots, \ti{X}_{\beta, \si(n)}) 
\end{multline}
for any $\si \in S_n$\,.

Combining this observation with \eqref{Delta-Delta-X}, we conclude that 
\begin{multline}
\label{UUU-brack11}
\frac{1}{m!} \{\cU', \cU', \dots, \cU' \}_m (g_1, \dots, g_m, f_1, \dots, f_n) (X)  \\[0.1cm]
= \sum_{\beta} 
\sum_{\substack{k_1 + \dots + k_m = n \\[0.1cm] k_j \ge 1}}~
\sum_{\si \in \Sh_{k_1, k_2, \dots, k_m}} 
\pm Q_{A_3} \circ (1 \otimes g_1 \otimes \dots \otimes g_m)  \\[0.1cm] \circ 
(\D_{\si(\bt^{\pf}_{k_1, \dots, k_m}) } \otimes 1^{\otimes\, n} ) 
( \ti{\ga}_{\beta}; f_{\si(1)}(\ti{X}_{\beta, \si(1)}), \dots, f_{\si(n)}(\ti{X}_{\beta, \si(n)})) \,.
\end{multline}

On the other hand, 
\begin{multline}
\label{line-Delta-Delta}
\sum_{\beta} 
\sum_{\substack{k_1 + \dots + k_m = n \\[0.1cm] k_j \ge 1}}~
\sum_{\si \in \Sh_{k_1, k_2, \dots, k_m}} 
\pm (\D_{\si(\bt^{\pf}_{k_1, \dots, k_m}) } \otimes 1^{\otimes\, n} ) 
( \ti{\ga}_{\beta}; f_{\si(1)}(\ti{X}_{\beta, \si(1)}), \dots, f_{\si(n)}(\ti{X}_{\beta, \si(n)}))  \\[0.1cm]
= \D_m \Big(  \sum_{\beta} \pm (\ti{\ga}_{\beta}; f_{1}(\ti{X}_{\beta, 1}), \dots, f_n(\ti{X}_{\beta, n}))  \Big)\,.
\end{multline}
 
Therefore, by definition of the $\sLie$-structure on $L_{23} = \Hom(\cC(A_2), A_3)$, we have  
\begin{multline}
\label{UUU-brack-OK}
\frac{1}{m!} \{\cU', \cU', \dots, \cU' \}_m (g_1, \dots, g_m, f_1, \dots, f_n) (X) = \\[0.1cm]
 \cU'  (\{g_1, \dots, g_m \}^{L_{23}}_m, f_1, \dots, f_n) \, (X)\,.
\end{multline}

Let us also observe that, due to  \eqref{cU-pr-zero},   
$$
\big( \cU' \circ (Q_{23} \otimes 1) (g_1, \dots, g_m, f_1, \dots, f_n) \big) (X) = 
\cU'  (\{g_1, \dots, g_m \}^{L_{23}}_m, f_1, \dots, f_n) (X)\,.
$$

Thus, 
\begin{multline}
\label{UUU-brack-OK1}
\frac{1}{m!} \{\cU', \cU', \dots, \cU' \}_m (g_1, \dots, g_m, f_1, \dots, f_n) (X) = \\[0.1cm]
\big( \cU' \circ  (Q_{23} \otimes 1)  (g_1, \dots, g_m, f_1, \dots, f_n) \big) (X)
\end{multline}
which implies the desired cancellation of sum 
\eqref{sum-UUU-brack} with the term $- \cU' \circ (Q_{23} \otimes 1)$ 
in \eqref{MC-U-pr}.  

Let us now address the issue of sign factors.  
The sign factor in front of the term 
\begin{equation}
\label{term-UUU-brack}
Q_{A_3} \circ (1 \otimes g_1 \otimes \dots \otimes g_m)  \\[0.1cm] \circ 
(\D_{\si(\bt^{\pf}_{k_1, \dots, k_m}) } \otimes 1^{\otimes\, n} ) 
( \ti{\ga}_{\beta}; f_{\si(1)}(\ti{X}_{\beta, \si(1)}), \dots, f_{\si(n)}(\ti{X}_{\beta, \si(n)}))
\end{equation}
in \eqref{UUU-brack11} comes from rearranging the homogeneous vectors
\begin{equation}
\label{stand-order}
f_1, f_2, \dots, f_n,  \ti{\ga}_{\beta}, \ti{X}_{\beta, 1}, \ti{X}_{\beta, 2}, 
\dots \ti{X}_{\beta, n}
\end{equation}
from their standard order in \eqref{stand-order} to the order in which they appear 
in \eqref{term-UUU-brack}. It is easy to see that we get the same sign factors
in front of the corresponding terms, when we unfold the right hand 
side of \eqref{UUU-brack-OK}. 

Proposition \ref{prop:comp} is proved. 
\end{proof}

Combining Lemma \ref{lem:infty-morph} with Proposition \ref{prop:comp}
we deduce that 
\begin{cor}
\label{cor:cU-intro}
The map $\cU'$ defined by equations \eqref{cU-pr-nonzero} 
and \eqref{cU-pr-zero} lifts to an $\infty$-morphism 
\begin{equation}
\label{cU-comp}
\cU \maps \map(A_2, A_3) \oplus \map(A_1, A_2) \to \map(A_1, A_3)\,.
\end{equation}
\qed
\end{cor}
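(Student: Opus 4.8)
The plan is to derive Corollary \ref{cor:cU-intro} as a formal consequence of Lemma \ref{lem:infty-morph} and Proposition \ref{prop:comp}; essentially no computation beyond what has already been done is required. The one thing to notice is the correct algebraic framing: by \eqref{sLie-oper}, $\sLie$-algebras are precisely algebras over $\Cobar(\coCom)$, so Lemma \ref{lem:infty-morph} may be applied with the cooperad $\cC$ taken to be $\coCom$. Put
\[
L \;:=\; \map(A_2,A_3) \oplus \map(A_1,A_2), \qquad L' \;:=\; \map(A_1,A_3),
\]
both viewed as $\sLie$-algebras, the direct sum on the left carrying the $\sLie$-structure recalled in Section \ref{sec:tLie}. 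Since $\coCom(W) = \und{S}(W)$ for every cochain complex $W$, the convolution $\sLie$-algebra $\Conv\bigl(\coCom(L), L'\bigr)$ of Proposition \ref{prop:Conv-V-A} is exactly the $\sLie$-algebra \eqref{Conv-for-composition} that figures in Proposition \ref{prop:comp}, and the degree $0$ map $\cU'$ of \eqref{comp_map} is a degree $0$ element of it.

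First I would verify the hypothesis of Lemma \ref{lem:infty-morph} for $\cC = \coCom$: it asks that $\coCom(0) = \bfzero$, which holds by the running convention $\Com(0)=\bfzero$ (and $\coCom$ also meets the standing assumptions of Section \ref{sec:prereq-HA}). Therefore $\Hom(\coCom(L), L')$ equals $\cF^{\ari}_1\Hom(\coCom(L),L')$, is pro-nilpotent, and the Maurer--Cartan equation is defined on all of its degree $0$ part; Lemma \ref{lem:infty-morph} then gives a bijection between those MC elements and $\infty$-morphisms of $\sLie$-algebras $L \to L'$, sending an $\infty$-morphism $U$ to $U' = p_{L'} \circ U$. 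Proposition \ref{prop:comp} asserts precisely that $\cU'$ is a MC element of $\Hom(\coCom(L),L')$, so running the bijection from MC elements to $\infty$-morphisms produces the unique $\infty$-morphism of $\sLie$-algebras
\[
\cU \maps \map(A_2,A_3) \oplus \map(A_1,A_2) \longrightarrow \map(A_1,A_3)
\]
whose corestriction $p_{L'}\circ\cU$ is $\cU'$; this is the lift claimed in \eqref{cU-comp}.

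The only point that still wants attention --- though hardly an obstacle, since the substantive argument is entirely inside Proposition \ref{prop:comp} --- is that $\cU$ should be a morphism in the \emph{filtered} sense, i.e.\ continuous for the topologies induced by the arity filtrations \eqref{Conv-filter} on source and target. This follows by inspection of \eqref{cU-pr-nonzero}--\eqref{cU-pr-zero}: when the $f_i$ lie in deep pieces of \eqref{Conv-filter}, the comultiplications $\D_n$ force $\cU'\bigl(g\otimes(f_1\cdots f_n)\bigr)$ into a correspondingly deep piece of the arity filtration on $\map(A_1,A_3)$, so $\cU'$, and hence the coalgebra map $\cU$, respects the filtrations. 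For completeness one should also observe that identifying the convolution $\sLie$-structure \eqref{1-bracket}--\eqref{m-bracket} on \eqref{Conv-for-composition} with the structure implicitly used in the proof of Proposition \ref{prop:comp} is a matter of unwinding definitions --- the coderivation $Q_{L'}$ of $\coCom(L')$ encodes the $\sLie$-brackets of $L'$, and the $\D_n$ on $\coCom(L)=\und{S}(L)$ are the canonical comultiplications of the cofree $\coCom$-coalgebra --- so nothing beyond Proposition \ref{prop:comp} is needed.
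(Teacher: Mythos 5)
Your proposal is correct and is essentially the paper's own argument: the paper deduces the corollary by combining Lemma \ref{lem:infty-morph} (applied with the cooperad $\coCom$, so that $\coCom(L)=\und{S}(L)$ and $\sLie=\Cobar(\coCom)$) with Proposition \ref{prop:comp}, exactly as you do. Your extra remarks on $\coCom(0)=\bfzero$ and on continuity with respect to the arity filtrations are sound details that the paper leaves tacit (continuity of $\infty$-morphisms of filtered $\sLie$-algebras is a standing convention from Section \ref{sec:tLie}).
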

\begin{remark}
\label{rem:U-pr-again}
We would like to remark that the map 
$$
\cU'  \maps \und{S} \big( \Hom(\cC(A_2), A_3) \oplus  \Hom(\cC(A_1), A_2)  \big) 
\to  \Hom(\cC(A_1), A_3),
$$
can be equivalently defined by the single formula: 
\begin{equation} 
\label{U-pr-again}
\cU' \bigl( (g_1 \oplus f_1),(g_2  \oplus f_2),\ldots,(g_n  \oplus f_n) \bigr) = 
\sum_{i=1}^{n} \pm g_{i} \circ  \bigl(1 \tensor f_1 \tensor \cdots \tensor
\widehat{f_{i}} \tensor \cdots \tensor f_{n} \bigr) \circ \D_{n-1}
\end{equation}
where $(g_i \oplus f_i) \in \Hom(\cC(A_2), A_3)
\oplus  \Hom(\cC(A_1), A_2)$ and $\pm$ is the usual Koszul sign factor.  
\end{remark}

The following theorem shows that the composition in 
$\HoAlg_{\cC}$ given by $\infty$-morphism \eqref{cU-comp}
is associative. 
\begin{thm} 
\label{thm:comp-assoc}
Let $A_1, \dots, A_4$ be $\Cobar(\cC)$-algebras and let 
$$
\cU_{i_1 i_2 i_3} \maps \map(A_{i_2} , A_{i_3}) \oplus \map(A_{i_1}, A_{i_2}) \to \map(A_{i_1}, A_{i_3})
$$
be the $\infty$-morphism given in Corollary \ref{cor:cU-intro}. Then the 
following diagram 
\begin{equation} 
\label{pent}
\begin{tikzpicture}[ext/.style={rectangle, minimum size=4, inner sep=1}]
\node[ext] (123-4) at (0,1.5) {$\map(A_3, A_4) \oplus \big( \map(A_2,A_3) \oplus \map(A_1,A_2) \big)$};
\node[ext] (1-234) at (0,0) {$\big( \map(A_3, A_4) \oplus  \map(A_2,A_3) \big) \oplus \map(A_1,A_2)$};
\node[ext] (134) at (6, 3) {$\map(A_3, A_4) \oplus \map(A_1,A_3)$};
\node[ext] (124) at (6, -1.5) {$\map(A_2, A_4) \oplus \map(A_1,A_2)$};
\node[ext] (14) at (10, 0.75) {$\map(A_1, A_4) $};
\path[->,font=\scriptsize]
(0,1.1) edge  node[auto] {$\cong$} (0, 0.4)
(1,2)  edge  node[auto] {$~~\id \otimes \cU_{123}$} (3,2.8)
(1,-0.5) edge  node[below] {$ \cU_{234} \otimes \id~~$} (3,-1.3) 
(8,2.5) edge  node[auto] {$ \cU_{134}$} (9.5,1.2) 
(8,-1) edge  node[below] {$~~~~\cU_{124}$} (9.5,0.3) ;
\end{tikzpicture}
\end{equation}
commutes.
\end{thm}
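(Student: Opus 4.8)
The plan is to verify the pentagon diagram \eqref{pent} by reducing it to an identity between MC elements and then, via Lemma \ref{lem:infty-morph} and the explicit composition formula \eqref{composition} for enhanced morphisms, to an identity of the ``primed'' maps $\cU'$. Concretely, each of the two composite paths around the pentagon is an $\infty$-morphism
\[
\map(A_3,A_4)\oplus\map(A_2,A_3)\oplus\map(A_1,A_2)\ \longrightarrow\ \map(A_1,A_4),
\]
and by Lemma \ref{lem:infty-morph} such an $\infty$-morphism is completely determined by its composition with the projection onto $\map(A_1,A_4)=\Hom(\cC(A_1),A_4)$, i.e. by a single ``$U'$-type'' map out of $\und S\bigl(\map(A_3,A_4)\oplus\map(A_2,A_3)\oplus\map(A_1,A_2)\bigr)$. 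So the first step is to write down, using \eqref{composition} and the twisting formulas, the primed map attached to each path. Here I would use the clean closed form \eqref{U-pr-again} from Remark \ref{rem:U-pr-again}: because $\cU'$ in that form is ``linear in the $g$'s and inserts the $f$'s'', composing two copies of it produces, for the upper path (first $\id\otimes\cU_{123}$, then $\cU_{134}$), a sum of terms of the shape $h\circ(1\otimes g_1\otimes\cdots)\circ\D\circ(1\otimes f_1\otimes\cdots)\circ\D$ with $h\in\map(A_3,A_4)$, $g_j\in\map(A_2,A_3)$, $f_i\in\map(A_1,A_2)$, and similarly for the lower path.

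The second step is the combinatorial heart: show these two sums agree. Both sides amount to: pick the single $h\in\map(A_3,A_4)$, distribute the remaining $g$'s and $f$'s, and then contract against iterated comultiplications $\D$. The key input is coassociativity of the $\cC$-coalgebra structure on $\cC(A_1)$, in the form of the tree identity \eqref{Delta-Delta-X} already used in the proof of Proposition \ref{prop:comp}: iterating $\D$'s along a two-level planar tree and then along a refined tree equals iterating along the composite tree. Bracketing on the left (upper path) versus on the right (lower path) corresponds precisely to the two ways of building the same composite tree $\bt^{\pf}$ out of elementary grafts — this is exactly the shape of identity that makes a pentagon commute. So after unfolding, both composites become the same sum indexed by: a choice of which argument is the ``outermost'' $h$, an ordered partition of the remaining indices recording which $g$'s sit over which block of $f$'s, and shuffles within blocks, all contracted via $\D_\bullet$ along the appropriate planar tree. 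I would organize the bookkeeping with the $\Tree(n)$ formalism so that coassociativity is invoked once, cleanly, rather than repeatedly.

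The third step is signs. As in the proof of Proposition \ref{prop:comp}, I would not track Koszul signs term-by-term during the main computation; instead, after establishing that both composites are sums over the same index set with the same ``underlying'' operadic terms, I would argue that the sign in front of a given term on each side is obtained by rearranging the fixed tuple $(h,g_1,\dots,g_m,f_1,\dots,f_n,$ and the relevant $X$-components$)$ from a common standard order into the order in which the entries actually appear in that term — and this rearrangement, hence the sign, is the same on both sides because the final term is literally the same expression. This is the same device used verbatim at the end of the proof of Proposition \ref{prop:comp}, so I would simply refer back to it.

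I expect the main obstacle to be the bookkeeping in step two: correctly matching the index sets of the two iterated-composition sums, i.e. checking that ``shuffle the $f$'s among the $g$-blocks, then among the $h$-block'' (upper path) and ``group the $f$'s first, then regroup'' (lower path) enumerate the same decorated trees with the same multiplicities — in particular that no spurious factorials survive once one passes between the coinvariant ($\und S$, with $1/k!$'s) and invariant ($\cC(\cdot)^{\inv}$) descriptions, exactly as flagged around \eqref{U-and-diffs}. Once the two sides are seen to be indexed by the same set of planar trees with leaves decorated by the $f$'s and internal vertices decorated by $g$'s and the top by $h$, commutativity of \eqref{pent} follows, and the verification of the remaining (strict) compatibilities of $\cU'$ with differentials, which also enter \eqref{composition}, is the same kind of straightforward check already indicated as ``left to the reader'' in Proposition \ref{prop:comp}.
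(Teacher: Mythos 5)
Your proposal follows essentially the same route as the paper's proof: compose each path of the pentagon with the projection onto $\map(A_1,A_4)$, unfold the resulting primed maps (the lower path giving the nested expression, the upper path the shuffle sum), and identify them via the coassociativity/tree identity \eqref{Delta-Delta-X} already established in the proof of Proposition \ref{prop:comp}, with Koszul signs handled by the same rearrangement argument. The only small slip is attributing the determination of an $\infty$-morphism by its projection to Lemma \ref{lem:infty-morph} rather than to cofreeness of $\und{S}(\map(A_1,A_4))$, but this does not affect the argument.
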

\begin{proof}
Let  $h \in \map(A_3,A_4)$,   $g_{1}, \ldots, g_{m} \in \map(A_2,A_3)$, 
and $f_{1}, \ldots, f_{n} \in \map(A_1,A_2)$\,. Composing the 
lower arrows in \eqref{pent} with the canonical projection 
\begin{equation}
\label{p-14}
p_{14} :  \und{S}(\map(A_1,A_4)) \to  \map(A_1, A_4), 
\end{equation}
we get 
\begin{equation} 
\label{enrich_eq1}
\begin{split}
&p_{14}\circ  \cU_{124} \circ  (\cU_{234} \tensor \id )(h, g_1, \dots, g_m, 
f_1, \dots, f_n ) =\\
& \quad  h
\Bigl( (1 \tensor g_1 \tensor \cdots \tensor g_{m})\circ  \Delta_{m} \circ
 (1 \tensor f_{1} \tensor \cdots \tensor f_{n}) \circ \Delta_{n} \Bigr)\,.
\end{split}
\end{equation}

Similarly, composing the upper arrows in \eqref{pent}
with canonical projection \eqref{p-14}, we get   
\begin{equation} 
\label{enrich_eq2}
\begin{split}
& p_{14} \circ \cU_{134} \circ (\id \tensor \, \cU_{123})(h, g_1, \dots, g_m, f_1, \dots, f_n) = \\
&\sum_{\substack{k_1+\cdots + k_{m} =n \\ \sigma \in
    \Sh(k_1,\ldots,k_m)}} \pm
h \Bigl ( 1 \tensor g_1(1 \tensor f_{\sigma(1)} \tensor
f_{\sigma(2)} \tensor   \cdots \tensor f_{\sigma(k_1)}) 
 \tensor  g_2(1 \tensor f_{\sigma(k_1 +1)} \tensor \cdots \tensor f_{\sigma(k_1+
  k_2)}) \tensor
 \cdots  \\
&\quad \quad \tensor g_m(1 \tensor f_{\sigma(n-k_m +1)} \tensor \cdots
 \tensor f_{\sigma(n)})  \circ (1 \tensor \Delta_{k_1} \tensor
 \Delta_{k_2} \tensor  \cdots \tensor \Delta_{k_m} ) \Bigr) \Delta_m,
\end{split}
\end{equation}
where the sign factors are determined by the Koszul rule. 

Using equation \eqref{Delta-Delta-X}, computation \eqref{compute-Delta-Delta}, and 
equation \eqref{line-Delta-Delta} from the proof of Proposition \ref{prop:comp},
we conclude that the left hand side of 
\eqref{enrich_eq1} coincides with the left hand side of \eqref{enrich_eq2}. 
Since any $\infty$-morphism to $\map(A_1, A_4)$ is uniquely determined by 
its composition with projection \eqref{p-14}, we deduce that 
$$
 \cU_{124} \circ  (\cU_{234} \tensor \id ) =  \cU_{134} \circ (\id \tensor\, \cU_{123})\,.
$$

Thus diagram \eqref{pent} is indeed commutative. 
\end{proof}

\subsection{The proof of the unit axiom}

Let us recall that $\bfzero$ is the unit object in the category $\tLie$ and 
observe that for every $\Cobar(\cC)$-algebra $A$ we have a canonical 
enhanced morphism 
\begin{equation}
\label{unit-morphism}
\bfzero ~\xto{(\id_A, \, 0)}~ \map(A,A)\,,
\end{equation}
where $\id_A$ is the MC element of $\map(A,A) = \Hom(\cC(A), A)$
corresponding to the identity $\infty$-morphism
$$
\id_{\cC(A)} : \cC(A) \to  \cC(A)
$$ 
and $0$ is the unique $\sLie$-morphism from $\bfzero $ to $\map(A,A)$.

We claim that 
\begin{prop}
\label{prop:unit}
For every pair $A, B$ of homotopy algebras of type $\cC$, 
the diagrams 
\begin{equation}
\label{unit-right}
\begin{tikzpicture}
\matrix (m) [matrix of math nodes, row sep=3em, column sep=3em]
{\map(A,B) \oplus \map(A,A) & \map(A,B)   ~ \\
 \map (A,B) \oplus \bfzero & ~ \\ };
\path[->, font=\scriptsize]
(m-1-1) edge node[above] {$\cU$} (m-1-2) 
(m-2-1) edge  node[auto] {$ \id_{\map(A,B)} \otimes (\id_A, \, 0)$} (m-1-1)
edge (m-1-2);
\end{tikzpicture}
\end{equation}
\begin{equation}
\label{unit-left}
\begin{tikzpicture}
\matrix (m) [matrix of math nodes, row sep=3em, column sep=3em]
{\map(B,B) \oplus \map(A,B) & \map(A,B)   ~ \\
  \bfzero \oplus \map(A,B) & ~ \\ };
\path[->, font=\scriptsize]
(m-1-1) edge node[above] {$\cU$} (m-1-2) 
(m-2-1) edge  node[auto] {$(\id_B, \, 0) \otimes  \id_{\map(A,B)}$} (m-1-1)
edge (m-1-2); 
\end{tikzpicture}
\end{equation}
commute.
\end{prop}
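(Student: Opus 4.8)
The plan is to unwind each triangle in $\tLie$ using the composition law \eqref{composition} for enhanced morphisms and to compare the outcome with the canonical unit isomorphism of the monoidal category $\tLie$ from Section~\ref{sec:tLie}. Observe first that, viewed as an enhanced morphism, this canonical isomorphism $\map(A,B)\oplus\bfzero\to\map(A,B)$ (resp.\ $\bfzero\oplus\map(A,B)\to\map(A,B)$) is the pair $(0,\iota)$, where $\iota$ is the strict $\infty$-morphism equal to the identity on the underlying complex. Since $\cU$ itself (Corollary~\ref{cor:cU-intro}) is a genuine $\infty$-morphism, i.e.\ the enhanced morphism $(0,\cU)$, it therefore suffices to check that each of the two composite enhanced morphisms has trivial Maurer--Cartan component and strict $\infty$-morphism component $\iota$; and since an $\infty$-morphism with target $\map(A,B)$ is determined by its composition with the projection onto $\map(A,B)$, it is enough to compare the ``primed'' Taylor components, exactly as in the proof of Theorem~\ref{thm:comp-assoc}.

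For diagram \eqref{unit-right}, write $\al:=0\oplus\id_A$ for the MC element underlying $\id_{\map(A,B)}\otimes(\id_A,0)$ and $\iota_1\maps\map(A,B)\to\map(A,B)\oplus\map(A,A)$ for the underlying strict $\infty$-morphism, namely the inclusion of the first summand. By \eqref{composition} the composite is $\bigl(\cU_*(\al),\,\cU^{\al}\circ\iota_1\bigr)$. I would first note, straight from formula \eqref{U-pr-again}, that every summand of $\cU'$ carries exactly one tensor factor from the $\map(A_2,A_3)$-slot; since $\al$ is zero there, all Taylor components of $\cU$ vanish on $\al$, so $\cU_*(\al)=0$. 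Then I would compute the primed components of $\cU^{\al}\circ\iota_1$ via the standard twisting formula $(\cU^{\al})'_n(x_1,\dots,x_n)=\sum_{k\ge 0}\tfrac1{k!}\,\cU'(x_1,\dots,x_n,\al,\dots,\al)$ (see \cite[Section~2]{EnhancedLie}): feeding in $x_j=h_j\oplus 0$ and inspecting \eqref{U-pr-again}, a summand survives only when the distinguished index points at one of the $h_j$, and then the leftover zero entries force $n=1$; the surviving contributions give $(\cU^{\al}\circ\iota_1)'_1(h)=\sum_{k\ge 0}\tfrac1{k!}\,h\circ(1\otimes\id_A^{\otimes k})\circ\D_k$ and $(\cU^{\al}\circ\iota_1)'_n=0$ for $n\ge 2$. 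Finally I would recognize the sum: since $\id_A\in\Hom(\cC(A),A)$ is exactly the cogenerator component $p_A$ of $\id_{\cC(A)}$, the cofree-coalgebra structure gives $\sum_{k}\tfrac1{k!}(1\otimes p_A^{\otimes k})\circ\D_k=\id_{\cC(A)}$, whence $(\cU^{\al}\circ\iota_1)'_1(h)=h$ and the composite equals $(0,\iota)$.

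For diagram \eqref{unit-left} the argument is mirror-symmetric, with $\beta:=\id_B\oplus 0$ and $\iota_2$ the inclusion of the second summand of $\map(B,B)\oplus\map(A,B)$. Here $\cU_*(\beta)=0$ because $\beta$ is zero in every $\map(A_1,A_2)$-slot, so each term of $\cU'$ evaluated on copies of $\beta$ has a leftover zero tensor factor. The twist-and-compose computation now collapses to the $k=1$ term of the twisting sum, giving $(\cU^{\beta}\circ\iota_2)'_n(f_1,\dots,f_n)=p_B\circ(1\otimes f_1\otimes\cdots\otimes f_n)\circ\D_n$; this equals $f_1$ for $n=1$ (since $\D_1=\id$ and $p_B$ restricts to the identity on the arity-one component of $\cC(B)$) and vanishes for $n\ge 2$ (since $p_B$ annihilates the higher-arity components). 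Hence this composite is also $(0,\iota)$, and both triangles commute.

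Throughout, the Koszul signs cause no trouble, since every surviving term only commutes the degree-zero elements $\id_A$ or $\id_B$ past other inputs, so the sign is always $+1$. The only genuinely non-formal input I foresee is the cofree-coalgebra identity $\sum_k\tfrac1{k!}(1\otimes p_A^{\otimes k})\circ\D_k=\id_{\cC(A)}$ (and its one-input shadow $p_B\circ(1\otimes f)\circ\D_1=f$): reconciling the factor $\tfrac1{k!}$ produced by the twisting formula with the invariants/coinvariants conventions of Section~\ref{sec:prereq-HA} is the one place that needs care, although this identity is already implicit in Lemma~\ref{lem:infty-morph}.
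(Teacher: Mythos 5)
Your proposal is correct and takes essentially the same route as the paper: the published proof of \eqref{unit-right} is exactly your computation — unfold the composition of enhanced morphisms, note that every Taylor component with more than one input from $\map(A,B)$ vanishes, and use the identity $\sum_{k}\tfrac{1}{k!}\,(1\otimes p_A^{\otimes k})\circ\D_k=\id_{\cC(A)}$ coming from the invariants/coinvariants identification — while \eqref{unit-left}, which you spell out, is left to the reader there. The only imprecision is your parenthetical justification in the left-unit case: $\D_1$ is a map $\cC(A)\to\cC(1)\otimes\cC(A)$ and is not the identity when $\cC(1)\neq\bbk$, and $p_B$ acts on the arity-one component $\cC(1)\otimes B$ by the cooperad counit tensored with $\id_B$, so the identity $p_B\circ(1\otimes f_1)\circ\D_1=f_1$ you need is the counit axiom for the $\cC$-coalgebra $\cC(A)$ rather than ``$\D_1=\id$''; with that correction the argument is complete.
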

\begin{proof}
Let us denote by 
$$
\cK ~:~  \map (A,B) \oplus \bfzero ~\to~  \map (A,B)
$$
the composition of the vertical arrow and the horizontal arrow in  \eqref{unit-right}
and let $\cK'$ be the corresponding element in 
$$
\Hom \big( \und{S}( \map (A,B) \oplus \bfzero),\, \map(A,B) \big)\,.
$$ 

Unfolding the definition of  composition of enhanced morphisms in $\tLie$ (see Proposition 3.4 
in \cite{EnhancedLie}), we get that 
\begin{equation}
\label{cK-pr}
\cK'(g_1, \dots, g_m) = \sum_{n \ge 0} \frac{1}{n!} 
 \cU'\big( (g_1, \dots, g_m) \otimes \id^{n}_A \big)\,,
\end{equation}
where $g_1, \dots, g_m \in \map (A,B)$.

Hence, using \eqref{cU-pr-nonzero} and \eqref{cU-pr-zero} we deduce that 
$$
\cK'(g_1, \dots, g_m) = 0 \qquad \forall ~~ m \ge 2
$$
and, for every $X \in \cC(A)$,
\begin{equation}
\label{cK-pr-unfold}
\cK'(g_1) (X) =  \sum_{n \ge 0} \frac{1}{n!} \,
g_1 \big( (1\otimes  \id^{n}_A)   \D_n(X) \big) =  g_1(X)\,,
\end{equation}
where the last equality follows from the identification 
of $\cC(\cC(A) )^{\inv}$ and $\cC(\cC(A))$ via the inverse 
of isomorphism \eqref{coinvar-invar}. 

Thus diagram \eqref{unit-right} indeed commutes. 

The proof of the commutativity of \eqref{unit-left} is easier.
So we leave it to the reader. 
\end{proof}

\subsection{The simplicial category $\HoAlg_{\cC}^{\sD}$ of homotopy algebras}

Let $\Omega_{n}=\Omega^{\bullet}(\Delta^{n})$ denote the polynomial de
Rham complex on the $n$-simplex with coefficients in $\bbk$, and
$\{\Omega_{n} \}_{n \geq 0}$ the associated simplicial
dg commutative $\bbk$-algebra. Let us recall \cite[Proposition 4.1]{EnhancedLie} that 
for every filtered $\sLie$-algebra $L$ the simplicial set $\mMC_{\bul}(L)$ 
with\footnote{Recall that $\MC(L)$ denotes the set of MC elements 
of a filtered $\sLie$-algebras $L$ \cite{EnhancedLie}.} 
$$
\mMC_n(L) : = \MC(L \hotimes \Om_n )
$$
is a Kan complex (a.k.a. an $\infty$-groupoid). We call the simplicial set 
$\mMC_{\bul}(L)$ the {\it Deligne-Getzler-Hinich (DGH) $\infty$-groupoid}. 

Let us also recall (see Theorem 4.2 in \cite{EnhancedLie}) that applying 
the functor $\mMC_{\bul}$ to mapping spaces of any $\tLie$-enriched category, 
we get a simplicial category whose mapping spaces are Kan complexes.  
Thus, applying \cite[Theorem 4.2]{EnhancedLie} to the $\tLie$-enriched category  
$\HoAlg_{\cC}$ and using Lemma \ref{lem:infty-morph} we deduce the 
following theorem:
\begin{thm}
\label{thm:HoAlg-Delta}
Let $\cC$ be a coaugmented dg cooperad satisfying 
Conditions \eqref{cond:cC-circ-filtered} and \eqref{cC-reduced}. 
Then the assignment  
$$
(A, B) ~\in~ \Objects(\Cat_{\cC})  \times  \Objects(\Cat_{\cC}) ~~ \mapsto ~~ 
\mMC_{\bul}(\map(A,B)) 
$$
gives us a category enriched over $\infty$-groupoids (a.k.a. Kan complexes).
Moreover, for every pair of $\Cobar(\cC)$-algebras $A, B$, the set $\mMC_{0}(\map(A,B))$
is in bijection with the set of $\infty$-morphisms from $A$ to $B$\,. \qed 
\end{thm}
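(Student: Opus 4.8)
The plan is to \emph{apply the machinery already assembled in the excerpt}, so that almost nothing remains to be proved. First I would observe that by Theorem \ref{thm:comp-assoc} and Proposition \ref{prop:unit}, the $\infty$-morphisms $\cU$ of Corollary \ref{cor:cU-intro} together with the unit enhanced morphisms \eqref{unit-morphism} equip the collection $\{\map(A,B) = \Hom(\cC(A),B)\}$ with the structure of a category enriched over the symmetric monoidal category $\tLie$; call this enriched category $\HoAlg_{\cC}$. This is exactly the input required by \cite[Theorem 4.2]{EnhancedLie}, which states that applying the functor $\mMC_{\bul}$ to the mapping objects of any $\tLie$-enriched category produces a simplicial category all of whose mapping spaces are Kan complexes (the Kan condition for each individual $\mMC_{\bul}(L)$ being \cite[Proposition 4.1]{EnhancedLie}). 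Feeding $\HoAlg_{\cC}$ into that theorem yields the first assertion: the assignment $(A,B)\mapsto \mMC_{\bul}(\map(A,B))$ is a simplicial (indeed Kan-complex-enriched) category.

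The one technical point that is genuinely local to this setting, and which I would spell out, is that $\mMC_{\bul}$ applies to $\map(A,B)$ at all: \cite[Proposition 4.1]{EnhancedLie} and \cite[Theorem 4.2]{EnhancedLie} require the mapping $\sLie$-algebras to be \emph{filtered} in the sense of Section \ref{sec:tLie}, i.e.\ equipped with a complete descending filtration compatible with the brackets. For this I would invoke Proposition \ref{prop:filtered}: the arity filtration \eqref{Conv-filter} on $\Hom(\cC(A),B)$ is bracket-compatible and complete, and under the hypothesis $\cC(0)=\bfzero$ of \eqref{cC-reduced} one has $\Hom(\cC(A),B) = \cF^{\ari}_1\Hom(\cC(A),B)$ by Lemma \ref{lem:infty-morph}, so the filtration indeed starts in degree $1$ as the definition in Section \ref{sec:tLie} demands. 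Hence each $\map(A,B)$ is an object of $\tLie$, and the composition $\infty$-morphisms $\cU$ and units are morphisms in $\tLie$ by the results already cited, so $\HoAlg_{\cC}$ is honestly $\tLie$-enriched.

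For the second assertion I would unwind the definition $\mMC_0(\map(A,B)) = \MC(\map(A,B)\hotimes\Om_0) = \MC(\map(A,B))$, using that $\Om_0 = \Omega^{\bullet}(\Delta^0) = \bbk$ so that $\map(A,B)\hotimes\Om_0 = \map(A,B)$ as filtered $\sLie$-algebras. Then Lemma \ref{lem:infty-morph} identifies $\MC(\Hom(\cC(A),B))$ with the set of $\infty$-morphisms from $A$ to $B$ via $U\mapsto U' = p_B\circ U$, which is precisely the claimed bijection.

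I do not expect a serious obstacle here; the theorem is essentially a bookkeeping corollary of \cite[Theorems 4.1, 4.2]{EnhancedLie} applied to the $\tLie$-enriched category whose existence was established in the preceding three propositions/theorems. The only place demanding a moment's care is the verification that the relevant filtrations are complete and supported in degrees $\ge 1$ so that the hypotheses of the imported theorems are literally met — and that has already been arranged by Proposition \ref{prop:filtered} and Lemma \ref{lem:infty-morph}, which is exactly why the statement of Theorem \ref{thm:HoAlg-Delta} includes Condition \eqref{cC-reduced} among its hypotheses.
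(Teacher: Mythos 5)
Your proposal is correct and follows essentially the same route as the paper: the theorem is deduced by applying \cite[Theorem 4.2]{EnhancedLie} to the $\tLie$-enriched category $\HoAlg_{\cC}$ built from Corollary \ref{cor:cU-intro}, Theorem \ref{thm:comp-assoc}, and Proposition \ref{prop:unit}, with Proposition \ref{prop:filtered} and Lemma \ref{lem:infty-morph} (under Condition \eqref{cC-reduced}) guaranteeing the mapping objects are filtered $\sLie$-algebras and identifying $\mMC_0(\map(A,B))$ with $\infty$-morphisms. Your explicit check that the filtration starts in degree $1$ and that $\Om_0=\bbk$ only makes the paper's implicit bookkeeping more visible.
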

 
\section{$\pi_0 \big(\HoAlg^{\sD}_{\cC} \big)$ is a correct homotopy category of homotopy algebras}
\label{sec:HT}

Let $A$ and $B$ be homotopy algebras of type $\cC$ and
$F$ be an $\infty$-morphism from $A$ to $B$: 
$$
F ~:~ \Big( \cC(A), \pa + Q_A  \Big) ~\to~ \Big( \cC(B), \pa + Q_B  \Big)\,.
$$ 

Composing $F$ with a canonical projection $p_B :  \cC(B) \to B$, 
and restricting this composition to $A \subset \cC(A)$, we get 
a map of cochain complexes:
\begin{equation}
\label{F-lin-term}
p_B  \circ F \Big|_{A} : A \to B\,.
\end{equation}
We refer to \eqref{F-lin-term} as {\it the linear term} of the $\infty$-morphism $F$.  
Recall that an $\infty$-morphism $F$ is called an $\infty$ {\it quasi-isomorphism} if 
its linear term \eqref{F-lin-term} is a quasi-isomorphism of cochain complexes. 

Let us recall that $\Cat_{\cC}$ is the category of $\Cobar(\cC)$-algebras with 
morphisms being $\infty$-morphisms, and observe that we have the obvious functor 
\begin{equation}
\label{mF}
\mF : \Cat_{\cC} \to \pi_0 \big(\HoAlg^{\sD}_{\cC} \big) 
\end{equation}
which acts by identity on objects and assigns to every $\infty$-morphism 
the isomorphism class of the corresponding MC element. 

The goal of this section is to prove that the category $\pi_0 \big(\HoAlg^{\sD}_{\cC} \big)$ is 
the homotopy category for $\Cat_{\cC}$. Namely, 
\begin{thm}
\label{thm:pi-0-HoAlg}
The functor $\mF$ sends $\infty$ quasi-isomorphisms to isomorphisms
and it is a universal functor with this property. I.e., if  $\mG : \Cat_{\cC} \to \mD$ is a functor 
which sends $\infty$ quasi-isomorphisms to isomorphisms in $\mD$ then 
there exists a unique functor  
$$
\mG' : \pi_0\big( \HoAlg^{\sD}_{\cC} \big) \to \mD
$$
such that $\mG = \mG' \circ \mF$\,.
\end{thm}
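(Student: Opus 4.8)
This statement has two halves: $\mF$ inverts $\infty$ quasi-isomorphisms, and $\big(\pi_0(\HoAlg^{\sD}_{\cC}), \mF\big)$ is universal with this property. The plan is to obtain the first half from the Goldman--Millson theorem of \cite{GMtheorem}, applied to the composition $\infty$-morphisms of Corollary \ref{cor:cU-intro} with one entry frozen at an $\infty$ quasi-isomorphism, and to obtain the second half by a direct argument using the path object $B\otimes\Omega_1$ together with the fact that every morphism of $\pi_0(\HoAlg^{\sD}_{\cC})$ is represented by an $\infty$-morphism.

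\textbf{$\mF$ inverts $\infty$ quasi-isomorphisms.} Let $F\maps A\to B$ be an $\infty$ quasi-isomorphism, write $F'=p_B\circ F\in\MC(\map(A,B))$, and let $\phi\maps A\to B$ be its linear term, a quasi-isomorphism of cochain complexes. For any $\Cobar(\cC)$-algebra $C$, inserting $F'$ into the appropriate slot of $\cU$ produces, by \eqref{cU-pr-zero} (which annihilates the would-be Maurer--Cartan part), genuine $\infty$-morphisms of filtered $\sLie$-algebras $\cU^{F'}_{\mathrm{post}}\maps\map(C,A)\to\map(C,B)$ and $\cU^{F'}_{\mathrm{pre}}\maps\map(B,C)\to\map(A,C)$. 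The key computation is that both are filtered quasi-isomorphisms with respect to the arity filtration \eqref{Conv-filter}: its $n$-th associated graded piece of $\map(C,A)$ is $\Hom(\cC(n)\otimes_{S_n}C^{\otimes n},A)$, and, unwinding \eqref{cU-pr-nonzero} and \eqref{U-pr-again} together with the cofree-coalgebra reconstruction of $F$ from $F'$, the linear terms of $\cU^{F'}_{\mathrm{post}}$ and $\cU^{F'}_{\mathrm{pre}}$ induce on these pieces, respectively, the maps $\Hom(\cC(n)\otimes_{S_n}C^{\otimes n},\phi)$ and $\Hom(\cC(n)\otimes_{S_n}\phi^{\otimes n},C)$. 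Since $\bbk$ is a field of characteristic zero, $\phi$ being a quasi-isomorphism forces $\phi^{\otimes n}$ and hence $\cC(n)\otimes_{S_n}\phi^{\otimes n}$ to be quasi-isomorphisms (K\"unneth and exactness of $S_n$-coinvariants), and both $\Hom(-,W)$ and $\Hom(U,-)$ preserve quasi-isomorphisms of complexes of $\bbk$-vector spaces. By \cite{GMtheorem}, $\mMC_{\bul}$ therefore sends $\cU^{F'}_{\mathrm{post}}$ and $\cU^{F'}_{\mathrm{pre}}$ to homotopy equivalences of Kan complexes. In particular, since composition in $\HoAlg^{\sD}_{\cC}$ is $\mMC_{\bul}(\cU)$, post-composition with $\mF(F)$ induces bijections $\pi_0\map(C,A)\to\pi_0\map(C,B)$, natural in $C$; by the Yoneda lemma in $\pi_0(\HoAlg^{\sD}_{\cC})$, $\mF(F)$ is an isomorphism.

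\textbf{Universality.} Let $\mG\maps\Cat_{\cC}\to\mD$ invert $\infty$ quasi-isomorphisms. By Theorem \ref{thm:HoAlg-Delta}, every morphism of $\pi_0(\HoAlg^{\sD}_{\cC})$ from $A$ to $B$ is the path component of a $0$-simplex of $\mMC_{\bul}(\map(A,B))$, hence of an $\infty$-morphism, so the only candidate is $\mG'([F]):=\mG(F)$, which forces uniqueness. For well-definedness one must show $\mG(F_1)=\mG(F_2)$ whenever $F_1,F_2\maps A\to B$ lie in the same path component, and since any such pair is joined by a finite zig-zag of $1$-simplices it suffices to treat one $1$-simplex $H$ from $F_1$ to $F_2$. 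Identifying, via Lemma \ref{lem:infty-morph}, such an $H$ with an $\infty$-morphism $A\to B\otimes\Omega_1$ for the evident $\Cobar(\cC)$-algebra structure on $B\otimes\Omega_1$, its two endpoint restrictions are $F_1$ and $F_2$. Now the endpoint evaluations $d_0,d_1\maps B\otimes\Omega_1\to B$ and the unit $s_0\maps B\to B\otimes\Omega_1$ are strict $\infty$-morphisms which are $\infty$ quasi-isomorphisms (the maps of commutative dg algebras $\Omega_1\to\bbk$ and $\bbk\to\Omega_1$ being quasi-isomorphisms) and satisfy $d_0\circ s_0=d_1\circ s_0=\id_B$; hence $\mG(d_0)=\mG(s_0)^{-1}=\mG(d_1)$, so $\mG(F_1)=\mG(d_1H)=\mG(d_1)\mG(H)=\mG(d_0)\mG(H)=\mG(d_0H)=\mG(F_2)$. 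Finally $\mG'$ is a functor: it preserves identities, and since composition in $\pi_0(\HoAlg^{\sD}_{\cC})$ restricted to classes of $\infty$-morphisms agrees with composition of $\infty$-morphisms (by the construction of $\cU$, Corollary \ref{cor:cU-intro}), it sends $[F_2]\circ[F_1]$ to $\mG(F_2\circ F_1)=\mG(F_2)\mG(F_1)$; and $\mG'\circ\mF=\mG$ holds by construction.

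\textbf{Main obstacle.} The only non-formal ingredient is the computation, on the associated graded of the arity filtration, of the linear terms of the twisted composition $\infty$-morphisms, which reduces the first half to a single invocation of Goldman--Millson; the remaining steps are bookkeeping, with the mild technical point of comparing $\map(A,B)\hotimes\Omega_1$ with $\Hom(\cC(A),B\otimes\Omega_1)$ when identifying $1$-simplices with homotopies of $\infty$-morphisms.
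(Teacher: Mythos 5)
Your overall strategy coincides with the paper's: the first half rests on showing that the (pre)composition $\sLie$-morphisms obtained by twisting $\cU$ by $F'$ are filtered quasi-isomorphisms and then invoking the Goldman--Millson theorem of \cite{GMtheorem}, and the second half is essentially the paper's Proposition \ref{prop:homot-implies-OK} (the path object $B\otimes\bbk[t,dt]$, the maps $p_0,p_1,i$, and the zig-zag reduction); your Yoneda finish is a legitimate shortcut replacing the explicit construction of a homotopy inverse in Corollary \ref{cor:invertible}. However, your ``key computation'' has a genuine gap. You work with the associated graded of the arity filtration \eqref{Conv-filter} and claim that its $n$-th piece is $\Hom(\cC(n)\otimes_{S_n}C^{\otimes n},A)$ with the na\"ive Hom-differential and that the induced maps are $\Hom(-,\phi)$ and $\Hom(\cC(n)\otimes_{S_n}\phi^{\otimes n},-)$. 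This is only true when $\cC(1)=\bbk$. In the generality the paper allows (Condition \ref{cond:cC-circ-filtered} permits a nontrivial, merely conilpotent, $\cC(1)_{\c}$), the coderivations $Q_A$, $Q_B$ and the MC element $F'$ have arity-preserving components coming from unary cooperations; these survive to the arity-associated graded, both in the differentials and in the comparison maps, so the graded pieces are twisted complexes and $\phi$ is not a priori a quasi-isomorphism for them. This is exactly why the paper's proof of Proposition \ref{prop:cU-F-q-iso} does not argue on $\Gr$ of the arity filtration: it splits off the arity-one summand and, on each $\cF^{\ari}_m$, introduces the second filtration $\cF^{\cC}_{\bul}$ induced by \eqref{cC-circ-filtr}, whose associated graded kills \emph{all} the twistings (because of \eqref{D-bt-filtr} together with $\cF^0\cC_{\c}=\bfzero$), and then concludes by completeness via Lemma D.1 of \cite{DeligneTw}.

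A second, smaller omission: the Goldman--Millson theorem as used here requires quasi-isomorphisms on every filtration stage $\cF^{\ari}_m$ (this is precisely what Proposition \ref{prop:cU-F-q-iso} supplies), not merely on the associated graded; even granting your $\Gr$-level statement you still need a convergence/completeness argument (of the Lemma D.1 type, applied to each $\cF^{\ari}_m$ with its induced complete, left-bounded filtration) to pass from the associated graded to the filtration stages. This step is routine to add; the first issue, by contrast, requires redoing the computation with the cooperad filtration and is the technical heart of the first half, so as written your argument proves the theorem only under the extra hypothesis $\cC(1)=\bbk$.
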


\subsection{``Inverting'' $\infty$ quasi-isomorphisms}
Let $A_1, A_2, A_3$ be $\Cobar(\cC)$-algebras, $F$ be an 
$\infty$-morphism from $A_1$ to $A_2$, and $F'$ be the MC element of $\map(A_1, A_2)$
corresponding to $F$. Let us denote by $F' \oplus 0$, $0 \oplus F'$ the corresponding 
MC elements of the $\sLie$-algebras
$$
\map(A_1, A_2) \oplus \map(A_3 , A_1) 
$$ 
and 
$$
\map(A_2, A_3) \oplus \map(A_1, A_2)\,,
$$
respectively. 

Twisting the $\sLie$-morphisms 
$$
\cU : \map(A_1, A_2) \oplus \map(A_3 , A_1)  \to \map(A_3, A_2) 
$$
and 
$$
\cU : \map(A_2, A_3) \oplus \map(A_1, A_2)  \to \map(A_1, A_3) 
$$
by the MC elements  $F' \oplus 0$, $0 \oplus F'$, respectively, and 
composing the resulting  $\sLie$-morphisms with the canonical maps 
$$
f \mapsto 0 \oplus f ~: ~ 
\map(A_3, A_1)  \to   \map(A_1, A_2) \oplus \map(A_3 , A_1)
$$
$$
f \mapsto f \oplus 0 ~: ~ 
 \map(A_2, A_3)  \to  \map(A_2, A_3) \oplus \map(A_1, A_2)
$$ 
we get two $\sLie$-morphisms 
\begin{equation}
\label{comp-A-312}
\cU_{A_3 A_1 A_2} :  \map(A_3, A_1) \to  \map(A_3, A_2) 
\end{equation}
and 
\begin{equation}
\label{comp-A-123}
\cU_{A_1 A_2  A_3} :  \map(A_2, A_3) \to \map(A_1, A_3)\,.
\end{equation}

The following proposition says that the induced maps of MC elements 
\begin{equation}
\label{comp-F-312}
(\cU_{A_3 A_1 A_2})_* : \MC( \map(A_3, A_1) ) \to \MC(  \map(A_3, A_2)  )
\end{equation}
and 
\begin{equation}
\label{comp-F-123}
(\cU_{A_1 A_2 A_3})_* : \MC( \map(A_2, A_3) ) \to \MC( \map(A_1, A_3) )
\end{equation}
correspond to the composition (resp. the pre-composition) of 
an $\infty$-morphism from $A_3$ to $A_1$ (resp. from $A_2$ to $A_3$)
with $F$: 
\begin{prop}
\label{prop:cU-star-comp}
If $G$ is an $\infty$-morphism from $A_3$ to $A_1$ and $G'$ is the 
corresponding MC element of $\map(A_3, A_1)$ then the MC element 
$(\cU_{A_3 A_1 A_2})_* (G')$ of $\map(A_3, A_2)$ corresponds to 
the composition $F \circ G$. Similarly, if $G$ is an $\infty$-morphism from $A_2$ to 
$A_3$ and $G'$ is the 
corresponding MC element of $\map(A_2, A_3)$ then the MC element 
$(\cU_{A_1 A_2 A_3})_* (G')$ of $ \map(A_1, A_3) $ corresponds to 
the composition $G \circ F$\,.
\end{prop}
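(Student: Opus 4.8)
The plan is to unwind the two twisted-composition maps $\cU_{A_3 A_1 A_2}$ and $\cU_{A_1 A_2 A_3}$ using the explicit formula for the composition $\infty$-morphism $\cU$ together with the formula \eqref{composition} for the composition of enhanced morphisms in $\tLie$, and then to recognize the resulting MC element as the one attached to $F\circ G$ (resp. $G\circ F$) via Lemma \ref{lem:infty-morph}. Since an $\infty$-morphism into $\map(A_3,A_2)$ (resp. into $\map(A_1,A_3)$) is uniquely determined by its composition with the canonical projection onto the cofree generators, it suffices to compute $p_{A_3A_2}\circ(\cU_{A_3A_1A_2})_*(G')$ and match it, term by term, with the MC element corresponding to $F\circ G$.

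First I would treat the composition case \eqref{comp-A-312}. By construction $\cU_{A_3A_1A_2}$ is obtained by twisting $\cU\maps \map(A_1,A_2)\oplus\map(A_3,A_1)\to\map(A_3,A_2)$ by the MC element $F'\oplus 0$ and precomposing with $f\mapsto 0\oplus f$. Twisting by an MC element $\alpha$ of the source replaces each arity-$n$ structure map by the sum over how many of the $n$ inputs are ``filled'' by $\alpha$; here $\alpha=F'\oplus 0$ has no $\map(A_3,A_1)$-component, so when we then precompose with $0\oplus f$ only the single $\map(A_3,A_1)$-input survives and all remaining inputs are filled by $F'$. Using the single-formula description \eqref{U-pr-again} of $\cU'$, the $k$ inputs filled by $F'$ together with the one surviving input $G'$ assemble into exactly one $g$-slot (necessarily $G'$, since $G'$ is the only element whose first tensor component lives in $\map(A_3,A_1)$, i.e. maps $\cC(A_3)$ to $A_1$) and $k$ $f$-slots filled by $F'$. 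Summing over $k\geq 0$ with the factor $1/k!$ yields
\[
p_{A_3A_2}\circ(\cU_{A_3A_1A_2})_*(G') \;=\; \sum_{k\ge 0}\frac{1}{k!}\, F'\circ\bigl(1\otimes (G')^{\otimes ?}\bigr)\cdots
\]
which, after identifying $\cC(A_2)^{\inv}$ with $\cC(A_2)$ exactly as in the proof of Lemma \ref{lem:infty-morph} (the $1/k!$ absorbing the symmetrization), is precisely $p_{A_3A_2}\circ(F\circ G)$, the linear-with-projection data of the coalgebra map $F\circ G\maps\cC(A_3)\to\cC(A_2)$. By Lemma \ref{lem:infty-morph} this identifies the MC element $(\cU_{A_3A_1A_2})_*(G')$ with the one corresponding to $F\circ G$.

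The precomposition case \eqref{comp-A-123} is entirely analogous: now the twist is by $0\oplus F'$ and we precompose with $g\mapsto g\oplus 0$, so the surviving input $G'$ plays the role of the unique $g$-slot while the $f$-slots get filled by copies of $F'$; the same bookkeeping with \eqref{U-pr-again} and the identification of $\cC(A_3)^{\inv}$ with $\cC(A_3)$ shows that $(\cU_{A_1A_2A_3})_*(G')$ corresponds to $G\circ F$. I expect the main obstacle to be purely organizational rather than conceptual: carefully tracking the Koszul signs through the twist--precompose--project composite and verifying that the combinatorial factors $1/k!$ from the twisting formula match the symmetrization factors coming from identifying coinvariants with invariants (formula \eqref{coinvar-invar}), exactly the sign issue already flagged at the end of the proof of Proposition \ref{prop:comp}; since the signs there were shown to work out, the same argument applies here verbatim, so no new difficulty arises.
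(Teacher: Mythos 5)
Your overall strategy is the intended one (the paper itself only says the verification is straightforward, and the explicit formulas you would need, \eqref{cU-A-312-pr} and \eqref{cU-A-123-pr}, appear later in the proof of Proposition \ref{prop:cU-F-q-iso}), but your bookkeeping for the composition case $\cU_{A_3A_1A_2}$ is wrong as written. In the ambient morphism $\cU\maps \map(A_1,A_2)\oplus\map(A_3,A_1)\to\map(A_3,A_2)$, formula \eqref{cU-pr-nonzero} applies the $f$-slot entries to the $\cC(A_3)$-tensor factors of $\D_n(X)$, $X\in\cC(A_3)$, and the single $g$-slot entry to the resulting element of $\cC(A_1)$; so the $f$-slots must be occupied by elements of $\Hom(\cC(A_3),A_1)$ and the $g$-slot by an element of $\Hom(\cC(A_1),A_2)$. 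Hence after twisting by $F'\oplus 0$ it is $F'$ that sits in the unique $g$-slot (and, by \eqref{cU-pr-zero}, exactly one copy of $F'$ survives -- there is no sum over the number of twists here), while the copies of $G'$ fill the $f$-slots. Your prose asserts the opposite (``the one surviving input $G'$ assembles into exactly one $g$-slot \dots and $k$ $f$-slots filled by $F'$''), which is type-incorrect: $F'\in\Hom(\cC(A_1),A_2)$ cannot be applied to the $\cC(A_3)$-factors. Your displayed formula $F'\circ(1\otimes(G')^{\otimes ?})$ has the correct shape, but it contradicts the sentence that is supposed to justify it.

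Relatedly, you attribute the series $\sum_k \tfrac{1}{k!}$ in the composition case to the twisting; in fact it comes from pushing forward the MC element, $(\cU_{A_3A_1A_2})_*(G')=\sum_{n\ge1}\tfrac{1}{n!}\,(\cU_{A_3A_1A_2})_n(G',\dots,G')=\sum_{n\ge1}\tfrac{1}{n!}\,F'\circ(1\otimes(G')^{\otimes n})\circ\D_n$, and these $\tfrac{1}{n!}$ are exactly the factors appearing when one reconstructs the coalgebra morphism $G\maps\cC(A_3)\to\cC(A_1)$ from $G'$ via the coinvariants--invariants identification (as in \eqref{U-and-diffs} in the proof of Lemma \ref{lem:infty-morph}), so that the sum equals $p_{A_1\to A_2}$-projected $F\circ G$, i.e. $F'\circ G$. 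It is only in the precomposition case $\cU_{A_1A_2A_3}$ that the series is produced by the twist (the inner slots get filled by arbitrarily many copies of $F'$, giving \eqref{cU-A-123-pr}, which reconstructs $F$ from $F'$), while the MC push-forward collapses because the twisted morphism vanishes on two or more inputs from $\map(A_2,A_3)$. Your description of that second case is correct; once you swap the roles of $F'$ and $G'$ in the first case and source the factorials correctly, the argument goes through as you outline, with the Koszul-sign remark carrying over as you say.
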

\begin{proof} The proof of these statements is straightforward. \\
\end{proof}

We also claim that
\begin{prop}
\label{prop:cU-F-q-iso}
$\sLie$-morphisms \eqref{comp-A-312} and \eqref{comp-A-123}
are compatible with the filtrations $\cF^{\ari}_{\bul}$ from \eqref{Conv-filter}.
Furthermore,  if $F$ is an $\infty$ quasi-isomorphism, 
then \eqref{comp-A-312} and \eqref{comp-A-123} give us  
$\sLie$ quasi-isomorphisms
\begin{equation}
\label{q-iso-cF-m}
\cF^{\ari}_{m}  \map(A_3, A_1) \to \cF^{\ari}_{m} \map(A_3, A_2) 
\quad \textrm{and} \quad 
\cF^{\ari}_{m} \map(A_2, A_3) \to \cF^{\ari}_{m} \map(A_1, A_3)
\end{equation}
respectively, for every $m \ge 1$\,.
\end{prop}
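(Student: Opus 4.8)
The plan is to reduce the statement to the behavior of the composition $\infty$-morphism $\cU$ on the associated graded of the arity filtration, where everything becomes linear and transparent. First I would verify that the $\sLie$-morphisms \eqref{comp-A-312} and \eqref{comp-A-123} preserve $\cF^{\ari}_\bullet$. This follows from Proposition \ref{prop:filtered}: $\cU$ is compatible with arity filtrations on all the convolution $\sLie$-algebras involved, and twisting by $F' \oplus 0$ or $0 \oplus F'$ — which are both elements of $\cF^{\ari}_1$ when $\cC(0) = \bfzero$ — preserves the filtration because the twisted brackets are sums of iterated brackets with the MC element, and these only increase arity. The canonical inclusions $f \mapsto f \oplus 0$ and $f \mapsto 0 \oplus f$ also respect arities by the definition of $\cF^{\ari}_\bullet$ on a direct sum. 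Hence \eqref{comp-A-312} and \eqref{comp-A-123} restrict to maps on each $\cF^{\ari}_m$.

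Next, for the quasi-isomorphism claim, I would pass to the associated graded. Since both sides are complete with respect to $\cF^{\ari}_\bullet$, a filtered morphism inducing a quasi-isomorphism on each $\gr_m = \cF^{\ari}_m / \cF^{\ari}_{m+1}$ is automatically a quasi-isomorphism on each $\cF^{\ari}_m$ (standard spectral sequence / inductive argument using completeness). So it suffices to compute $\gr_m$ of \eqref{comp-A-312} and \eqref{comp-A-123}. On the associated graded, the twisting disappears in the sense that only the differentials change, and the \emph{linear part} of $\cU$ is what acts: unpacking formula \eqref{U-pr-again} (or \eqref{cU-pr-nonzero} with $n = 0$ and $n = 1$ components), the induced map on $\gr_m \map(A_3, A_1) = \Hom\big(\cC(m) \otimes_{S_m} A_1^{\otimes m},\, A_1\big)$ is, up to signs, post-composition with the linear term $f_1 := p_{A_2} \circ F|_{A_1} : A_1 \to A_2$ of $F$, i.e. $\phi \mapsto f_1 \circ \phi$. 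Dually, for \eqref{comp-A-123} the induced map on $\gr_m$ is pre-composition of $\phi : \cC(m)\otimes_{S_m} A_2^{\otimes m} \to A_3$ with $1 \otimes f_1^{\otimes m} : \cC(m) \otimes_{S_m} A_1^{\otimes m} \to \cC(m) \otimes_{S_m} A_2^{\otimes m}$.

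If $F$ is an $\infty$ quasi-isomorphism, then $f_1 : A_1 \to A_2$ is a quasi-isomorphism of cochain complexes. Then $\phi \mapsto f_1 \circ \phi$ is a quasi-isomorphism on $\Hom\big(\cC(m) \otimes_{S_m} A_1^{\otimes m}, \, - \big)$ because $\Hom$ out of a (cofibrant, or here just a complex of vector spaces over a field of characteristic zero) fixed complex preserves quasi-isomorphisms in the target; over a field every complex is cofibrant and $\Hom(W, -)$ is exact, so this is immediate. For the pre-composition case, $1 \otimes f_1^{\otimes m}$ is a quasi-isomorphism (tensor product of quasi-isomorphisms over a field, then take $S_m$-coinvariants, which is exact in characteristic zero since $S_m$-coinvariants agree with $S_m$-invariants and averaging is exact), and $\Hom(-, A_3)$ sends quasi-isomorphisms between complexes of vector spaces to quasi-isomorphisms. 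Therefore each $\gr_m$ of \eqref{comp-A-312} and \eqref{comp-A-123} is a quasi-isomorphism, and completeness upgrades this to \eqref{q-iso-cF-m}.

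The main obstacle I anticipate is the bookkeeping needed to identify the associated-graded map precisely — in particular checking that the higher ($n \ge 2$) terms of $\cU'$ and all the twisting corrections land in strictly deeper filtration and thus vanish on $\gr_m$, so that only the linear post/pre-composition survives. This is exactly the kind of arity-counting already carried out in the proof of Proposition \ref{prop:comp} (the trees $\bt^{\pf}_{k_1,\dots,k_m}$ there show each insertion strictly raises arity), so it should go through cleanly, but it is where the real work lies; the homological-algebra conclusions afterward are formal.
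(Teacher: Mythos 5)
Your first step (compatibility of \eqref{comp-A-312} and \eqref{comp-A-123} with $\cF^{\ari}_{\bul}$) is fine and matches the paper's direct check. The gap is in the second step: you filter by \emph{arity} and claim that on $\gr_m$ only post/pre-composition with the linear term $\vf = p_{A_2}\circ F\big|_{A_1}$ survives, the higher terms landing in strictly deeper filtration. That is false in the generality of the paper, because $\cC$ is only assumed coaugmented, so $\cC_{\c}(1)$ may be nonzero. Concretely: the unary bracket \eqref{1-bracket} on $\map(W,A_3)$ contains precomposition with the coderivation $Q_W$ and the term $p_{A_3}\circ Q_{A_3}(1\otimes f(\D_1(\cdot)))$, and the components of $Q_{A_1}$, $Q_{A_2}$, $Q_{A_3}$ on $\cC_{\c}(1)\otimes A_i$ preserve arity; likewise, in the formulas for the linear terms of \eqref{comp-A-312} and \eqref{comp-A-123} (e.g.\ $h \mapsto \sum_k \tfrac{1}{k!}\, h\big(1\otimes F'\otimes\dots\otimes F'(\D_k(\cdot))\big)$), the contribution with all inner arities equal to $1$ involves the full restriction of $F'$ to $\cC(1)\otimes A_1$, not just $\vf$, and it too preserves arity. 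So the arity-associated graded is not the naive $\Hom\big(\cC(m)\otimes_{S_m}A_1^{\otimes m},\,A_3\big)$ with the untwisted differential, the induced map is not plain composition with $\vf$ (or $1\otimes\vf^{\otimes m}$), and your appeal to exactness of $\Hom$ plus K\"unneth does not apply without a further argument. Your own ``anticipated obstacle'' is exactly where the argument breaks: those corrections do \emph{not} sit in deeper arity filtration.

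This is precisely why the paper introduces, on $\cF^{\ari}_m\Hom(\cC_{\c}(W),A_3)$ (after splitting off $\Hom(W,A_3)$ in the case $m=1$), the descending filtration $\cF^{\cC}_{\bul}$ induced by the ascending filtration \eqref{cC-circ-filtr} on $\cC_{\c}$ from Condition \ref{cond:cC-circ-filtered}, rather than the arity filtration. Since $\cF^0\cC_{\c}=\bfzero$ and the cooperadic comultiplications respect \eqref{D-bt-filtr}, on the $\cF^{\cC}$-associated graded all contributions of $Q_{A_1},Q_{A_2},Q_{A_3}$ and of the higher Taylor components of $F'$ disappear, leaving exactly the map \eqref{assoc-Gr-map-dfn} induced by $\vf$ on the complexes $E_{m,q}(\cC_{\c},W)$; there K\"unneth, $\mathrm{char}(\bbk)=0$, and exactness of $\Hom(-,-)$ apply, and completeness plus boundedness of $\cF^{\cC}_{\bul}$ (via Lemma D.1 of \cite{DeligneTw} applied to the cone) give the quasi-isomorphism. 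Your formal closing step (graded quasi-isomorphism plus completeness implies quasi-isomorphism) is the same kind of argument the paper uses, and your route would indeed work in the special case $\cC_{\c}(1)=\bfzero$; but as written it does not prove the proposition under Condition \ref{cond:cC-circ-filtered}, and the missing ingredient is exactly the cooperad filtration.
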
 
\begin{proof}
Due to Remark \ref{rem:U-pr-again}, the composition 
$$
\cU'_{A_3 A_1 A_2} : = p_{\map(A_3, A_2)} \circ  \cU_{A_3 A_1 A_2} :  \und{S}\big(\map(A_3, A_1)\big) \to \map(A_3, A_2)
$$
is given by the formula 
\begin{equation}
\label{cU-A-312-pr}
\cU'_{A_3 A_1 A_2} (g_1, \dots, g_n) (X) =  
F' \big( 1 \otimes g_{1} \otimes \dots \otimes g_{n} \, (\D_n(X)) \big)\,,  
\end{equation}
where $X \in \cC(A_3)$ and $g_1, \dots, g_n \in \map(A_3, A_1)$\,. 
Similarly, the composition 
$$
\cU'_{A_1 A_2 A_3} : = p_{\map(A_1, A_3)} \circ \cU_{A_1 A_2 A_3} :  
\und{S}\big( \map(A_2, A_3) \big) \to \map(A_1, A_3)
$$
is given by the formula 
 \begin{equation}
\label{cU-A-123-pr}
\cU'_{A_1 A_2 A_3} (h_1, \dots, h_n) (Y) =
\begin{cases}
\displaystyle \sum_{m \ge 1}
\frac{1}{m!} h_1 \big( 1 \otimes F' \otimes \dots \otimes F' \, (\D_m(Y)) \big)   \qquad {\rm if} ~~  n=1 \,, \\[0.5cm]
 0 \qquad {\rm otherwise}\,,
\end{cases}
\end{equation}
where $Y \in \cC(A_1)$ and $h_1, \dots, h_n \in \map(A_2, A_3)$\,. 
  
The compatibility of map $\cU'_{A_3 A_1 A_2}$ and $\cU'_{A_1 A_2 A_3}$ with the filtrations 
$\cF^{\ari}_{\bul}$ can be checked directly by unfolding the right hand side 
of \eqref{cU-A-312-pr} and the right hand side of \eqref{cU-A-123-pr}, respectively. 

We also see that the linear terms
\begin{equation}
\label{cU-1-A-312}
\cU_{1, A_3 A_1 A_2} :  \map(A_3, A_1) \to  \map(A_3, A_2) 
\end{equation}
and 
\begin{equation}
\label{cU-1-A-123}
\cU_{1, A_1 A_2 A_3} :   \map(A_2, A_3) \to \map(A_1, A_3)
\end{equation}
of the $\sLie$-morphisms $\cU_{A_3 A_1 A_2}$ and $\cU_{A_1 A_2 A_3}$ are 
given by the formulas: 
$$
\cU_{1, A_3 A_1 A_2} (g) (X) = F' \big( (1 \otimes g) \circ \D_1(X) \big)
$$
and 
$$
\cU_{1, A_1 A_2 A_3}(h) (Y) = \sum_{k \ge 1}
\frac{1}{k!} h \big( 1 \otimes F' \otimes \dots \otimes F' \, (\D_k(Y)) \big)\,, 
$$
respectively, where $g \in \map(A_3, A_1)$, $h \in \map(A_2, A_3)$  $X \in \cC(A_3)$ and $Y \in \cC(A_1)$.   

Let $\vf :  A_1 \to A_2$ be the linear term 
\begin{equation}
\label{vf}
\vf : = p_{A_2} \circ F \big|_{A_1}
\end{equation}
of the $\infty$-morphism $F$ and assume that 
$\vf$ is a quasi-isomorphism of cochain complexes. 

To prove that $\cU_{1, A_1 A_2 A_3}$ induces an isomorphism 
$$
H^{\bul} \big( \cF^{\ari}_{m} \map(A_2, A_3) \big) \to H^{\bul} \big( \cF^{\ari}_{m} \map(A_1, A_3) \big)
$$
for every $m$, we observe that, for $m \ge 2$ 
$$
\cF^{\ari}_{m} \map(W, A_3)  =   \cF^{\ari}_{m} \Hom(\cC_{\c}(W), A_3) 
$$
and  $\map(W, A_3) = \cF^{\ari}_{1} \map(W, A_3)$ splits into the direct sum of 
cochain complexes
\begin{equation}
\label{Map-W-A3-splits}
\map(W, A_3) =  \Hom(W, A_3) ~\oplus~ \Hom(\cC_{\c}(W), A_3)\,,
\end{equation}
where $W$ is either $A_2$ or $A_1$, and $\cC_{\c}$ is the cokernel of 
the coaugmentation of $\cC$.  
 
We also observe that the chain map $\cU_{1, A_1 A_2 A_3}$ is 
compatible with decomposition \eqref{Map-W-A3-splits} and 
the corresponding chain map 
$$
\Hom(A_2, A_3) \to  \Hom(A_1, A_3)
$$  
is a quasi-isomorphism because the functor $\Hom(-,-)$ preserves 
quasi-isomorphisms in $\Ch_{\bbk}$\,.

So we should now prove that the chain map 
\begin{equation}
\label{cU1-123-cC-circ}
\cU_{1, A_1 A_2 A_3} \Big|_{\cF^{\ari}_{m} \Hom(\cC_{\c}(A_2), A_3) }   ~:~ \cF^{\ari}_{m} \Hom(\cC_{\c}(A_2), A_3)
\to   \cF^{\ari}_{m} \Hom(\cC_{\c}(A_1), A_3)  
\end{equation}
induces an isomorphism on cohomology for every $m \ge 1$\,.
 
For this purpose, we equip the cochain complex $\cF^{\ari}_{m}  \Hom(\cC_{\c}(W), A_3)$
with the following descending filtration
$$
\cF^{\ari}_{m}  \Hom(\cC_{\c}(W), A_3) = \cF^{\cC}_0 \cF^{\ari}_{m}  \Hom(\cC_{\c}(W), A_3)
\supset  \cF^{\cC}_1 \cF^{\ari}_{m}  \Hom(\cC_{\c}(W), A_3)  \supset
$$
$$
\supset  \cF^{\cC}_2 \cF^{\ari}_{m}  \Hom(\cC_{\c}(W), A_3) \supset \dots
$$
\begin{equation}
\label{cF-cC-Map}
\cF^{\cC}_q \, \cF^{\ari}_{m}  \Hom(\cC_{\c}(W), A_3) ~ : = ~
\end{equation}
$$
\big\{ g \in  \cF^{\ari}_{m}   \Hom(\cC_{\c}(W), A_3) ~~ \big|~~ g (X)  = 0 ~~\forall~ X \in \cF^{q}\cC_{\c}(W) \big\}\,,
$$
where $\cF^{\bul} \cC_{\c}$ is the ascending filtration on the pseudo-cooperad
$\cC_{\c}$ from \eqref{cC-circ-filtr} and $W$ is, as above, either $A_1$ or $A_2$. 

Due to inclusion \eqref{D-bt-filtr}, the differential on $\map(W,A_3)$ is 
compatible with the filtration. Moreover, condition \eqref{cocomplete}
implies that  $\cF^{\ari}_{m}  \Hom(\cC_{\c}(W), A_3)$ is complete with respect to
this filtration, i.e.
$$
\cF^{\ari}_{m}   \Hom(\cC_{\c}(W), A_3) = \lim_q ~ \cF^{\ari}_{m}   \Hom(\cC_{\c}(W), A_3)  ~
\big/ ~\cF^{\cC}_q \cF^{\ari}_{m}  \Hom(\cC_{\c}(W), A_3)\,.
$$

Let us denote by $E_{m,q}(\cC, W)$ the following cochain complex 
\begin{equation}
\label{E-m-q}
E_{m,q}(\cC_{\c}, W) : = 
\bigoplus_{n \ge m} \Big(  \big( \cF^q \cC_{\c}(n) / \cF^{q-1} \cC_{\c}(n) \big) \,  \otimes \, W^{\otimes\, n}  \Big)_{S_n} \,. 
\end{equation}

It is clear that,  the associated graded complex 
$$
\Gr_{\cF^{\cC}} \, \cF^{\ari}_m  \Hom(\cC_{\c}(W), A_3) 
$$
is isomorphic to  
\begin{equation}
\label{assoc-Gr-m-ge2}
\bigoplus_{q \ge 1} \Hom\big(E_{m,q}(\cC_{\c}, W) ,   A_3  \big)\,.
\end{equation}
Furthermore, the differential $\pa_{\Gr}$ on \eqref{assoc-Gr-m-ge2} comes from 
those on $W$, $\cC_{\c}$ and $A_3$\,.

The map between the associated graded complexes
\begin{equation}
\label{assoc-Gr-map}
\cU^{\Gr}_{1, A_1 A_2 A_3} ~ : ~  
\Hom\big(E_{m,q}(\cC_{\c}, A_2) ,   A_3  \big) \to  \Hom\big(E_{m,q}(\cC_{\c}, A_1) ,   A_3  \big)
\end{equation}
induced by \eqref{cU1-123-cC-circ} is given by the formula
\begin{equation}
\label{assoc-Gr-map-dfn}
\cU^{\Gr}_{1, A_1 A_2 A_3} (g)(\ga; a_1, \dots, a_n)  = g (\ga; \vf(a_1), \dots, \vf(a_n))\,,
\end{equation}
where $\vf$ is the linear term of the $\infty$-morphism $F$ and
$\ga \in  \cF^q \cC_{\c}(n) / \cF^{q-1} \cC_{\c}(n)$\,.
 
Using the K\"unneth theorem, the fact that 
$\vf$ induces an isomorphism $H^{\bul}(A_1) \to H^{\bul}(A_2)$,
and $\textrm{char}(\bbk) = 0$ we conclude that the chain map 
$$
E_{m,q}(\cC_{\c}, A_1)  \to E_{m,q}(\cC_{\c}, A_2)
$$ 
induced by $\vf$ is a quasi-isomorphism.  

Therefore, since the functor $\Hom(-,-)$ preserves quasi-isomorphisms
in $\Ch_{\bbk}$, we deduce chain map \eqref{assoc-Gr-map} between the 
associated graded complexes is a quasi-isomorphism.  
  
Since  $\cF^{\ari}_{m}  \Hom(\cC_{\c}(W), A_3)$ is complete with respect to filtration 
\eqref{cF-cC-Map},  and filtration \eqref{cF-cC-Map} is bounded from the 
left, applying Lemma D.1 from \cite{DeligneTw} to the cone of chain map 
\eqref{cU1-123-cC-circ}, we conclude that  \eqref{cU1-123-cC-circ}, and hence
\eqref{cU-1-A-123}, is indeed a quasi-isomorphism 
of cochain complexes. 
 
A similar argument shows that map \eqref{cU-1-A-312} is a quasi-isomorphism 
of cochain complexes, provided so is $\vf$ \eqref{vf}. 

Proposition \ref{prop:cU-F-q-iso} is proved. 
\end{proof} 
 
Let $A$ and $B$ be $\Cobar(\cC)$-algebras.  
We will now prove that every $\infty$ quasi-morphism $F$ from $A$ to $B$  
is ``invertible'' in the following sense: 
\begin{cor}
\label{cor:invertible}
Let $F$ be an $\infty$ quasi-isomorphism from $A$ to $B$. 
Then there exists an $\infty$-morphism $G$ from $B$ to $A$
such that the MC element $(G \circ F)'$ (resp.  $(F \circ G)'$)
of the $\sLie$-algebra $\map(A,A)$ (resp. $\map(B,B)$) 
corresponding to the composition $G \circ F$ (resp. $F \circ G$)
is isomorphic in $\mMC_{\bul}(\map(A,A))$ (resp. in $\mMC_{\bul}(\map(B,B))$)
to the MC element $\id'_A$ (resp. $\id'_B$) corresponding to the 
identity morphism $\id_A$ (resp. $\id_B$). If $\wt{G}$ is another 
$\infty$-morphism from $B$ to $A$ satisfying the above properties then
the MC element $\wt{G}' \in \map(B,A)$ corresponding to $\wt{G}$ is isomorphic 
to $G'$ in $\mMC_{\bul}(\map(B,A))$\,.
\end{cor}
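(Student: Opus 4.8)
The plan is to realize $G$ as a homotopy preimage of the identity under \emph{pre}-composition with $F$, to produce a one-sided homotopy inverse of $F$ on the other side via \emph{post}-composition with $F$, and then to merge the two by the standard ``a map admitting a left and a right homotopy inverse is invertible'' argument, carried out inside the homotopy category $\pi_0(\HoAlg^{\sD}_{\cC})$. The only nonformal inputs are Propositions \ref{prop:cU-F-q-iso} and \ref{prop:cU-star-comp} together with the filtered Goldman--Millson theorem \cite{GMtheorem}.

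First I would specialize Proposition \ref{prop:cU-F-q-iso} to $(A_1,A_2,A_3)=(A,B,A)$. Since $F$ is an $\infty$ quasi-isomorphism, the $\sLie$-morphism $\cU_{ABA}\maps \map(B,A)\to\map(A,A)$ of pre-composition with $F$ (this is map \eqref{comp-A-123} for these labels) restricts to a quasi-isomorphism on each filtration piece $\cF^{\ari}_m$; by the five lemma applied to the short exact sequences $0\to\cF^{\ari}_{m+1}\to\cF^{\ari}_m\to\cF^{\ari}_m/\cF^{\ari}_{m+1}\to0$ it is therefore a quasi-isomorphism on the associated graded, and both $\sLie$-algebras are complete with respect to $\cF^{\ari}_{\bul}$ by Proposition \ref{prop:filtered}. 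Hence \cite{GMtheorem} applies and the induced simplicial map
\[
(\cU_{ABA})_*\maps \mMC_{\bul}(\map(B,A))\longrightarrow\mMC_{\bul}(\map(A,A))
\]
is a homotopy equivalence of Kan complexes; in particular it is a bijection on $\pi_0$. Choose an $\infty$-morphism $G\maps B\to A$ whose class is sent to the class of $\id'_A$. By Proposition \ref{prop:cU-star-comp} we have $(\cU_{ABA})_*(G')=(G\circ F)'$, so $(G\circ F)'$ is isomorphic to $\id'_A$ in $\mMC_{\bul}(\map(A,A))$.

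Running the same argument for $(A_1,A_2,A_3)=(A,B,B)$ shows that the $\sLie$-morphism $\cU_{BAB}\maps\map(B,A)\to\map(B,B)$ of post-composition with $F$ (map \eqref{comp-A-312} for these labels) likewise induces a homotopy equivalence $(\cU_{BAB})_*$ on $\mMC_{\bul}$, so there is an $\infty$-morphism $\widetilde G_0\maps B\to A$ with $(F\circ\widetilde G_0)'$ isomorphic to $\id'_B$. Now I would pass to $\pi_0(\HoAlg^{\sD}_{\cC})$, which is an honest category by Theorem \ref{thm:comp-assoc} and Proposition \ref{prop:unit} (it is the homotopy category $\pi_0$ of the simplicial category $\HoAlg^{\sD}_{\cC}$ of Theorem \ref{thm:HoAlg-Delta}), and in which composition of $\infty$-morphisms descends to isomorphism classes of MC elements. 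Using $[\id_B]=[F\circ\widetilde G_0]$, $[G\circ F]=[\id_A]$, associativity and the unit axiom, one computes there
\[
[G]=[G]\circ[\id_B]=[G]\circ[F\circ\widetilde G_0]=[G\circ F]\circ[\widetilde G_0]=[\id_A]\circ[\widetilde G_0]=[\widetilde G_0],
\]
whence $[F\circ G]=[F]\circ[G]=[F]\circ[\widetilde G_0]=[F\circ\widetilde G_0]=[\id_B]$, i.e.\ $(F\circ G)'$ is isomorphic to $\id'_B$ in $\mMC_{\bul}(\map(B,B))$. Thus $G$ has both desired properties. For the uniqueness clause, if $\widetilde G$ is another $\infty$-morphism from $B$ to $A$ with $(\widetilde G\circ F)'$ isomorphic to $\id'_A$, then $(\cU_{ABA})_*$ sends the classes of both $\widetilde G'$ and $G'$ to the class of $\id'_A$; since $(\cU_{ABA})_*$ is injective on $\pi_0$, the classes of $\widetilde G'$ and $G'$ coincide, i.e.\ $\widetilde G'$ is isomorphic to $G'$ in $\mMC_{\bul}(\map(B,A))$.

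The main obstacle is the passage from Proposition \ref{prop:cU-F-q-iso} — which is stated level by level — to a genuine application of Goldman--Millson: one must check that the hypotheses of \cite{GMtheorem} (completeness of both sides, and the fact that $\cU_{ABA}$ and $\cU_{BAB}$ are filtered, continuous $\sLie$-morphisms inducing quasi-isomorphisms on the associated graded) are literally in force. Once that is secured, the remainder is the purely formal manipulation of one-sided inverses in $\pi_0(\HoAlg^{\sD}_{\cC})$, using only associativity (Theorem \ref{thm:comp-assoc}) and the unit axiom (Proposition \ref{prop:unit}).
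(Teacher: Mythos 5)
Your proposal is correct, and its first half (producing $G$ via the bijection on $\pi_0$ induced by $\cU_{ABA}$, using Proposition \ref{prop:cU-F-q-iso}, Proposition \ref{prop:cU-star-comp} and \cite{GMtheorem}) as well as the uniqueness clause coincide with the paper's argument. Where you genuinely diverge is the second step, i.e.\ showing $(F\circ G)'\sim \id'_B$: the paper never introduces a second one-sided inverse; it compares everything inside $\mMC_{\bul}(\map(A,B))$, noting that $(F\circ G\circ F)'=(\cU_{ABB})_*(F\circ G)'=(\cU_{AAB})_*(G\circ F)'$ is connected to $F'=(\cU_{ABB})_*(\id'_B)$, and then uses the injectivity on $\pi_0$ of $(\cU_{ABB})_*$ (again Proposition \ref{prop:cU-F-q-iso} plus \cite{GMtheorem}) to conclude. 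You instead use the surjectivity on $\pi_0$ of the post-composition map $\cU_{BAB}$ to produce a right homotopy inverse $\widetilde G_0$ and then run the standard ``left inverse equals right inverse'' computation in $\pi_0\big(\HoAlg^{\sD}_{\cC}\big)$. This is legitimate, but note what it consumes: besides Proposition \ref{prop:cU-F-q-iso} for a second instance ($\cU_{BAB}$ rather than $\cU_{ABB}$), you need that $\pi_0\big(\HoAlg^{\sD}_{\cC}\big)$ is an honest category (Theorems \ref{thm:comp-assoc}, \ref{thm:HoAlg-Delta} and Proposition \ref{prop:unit}) \emph{and} that composition of homotopy classes is compatible with composition of $\infty$-morphisms, i.e.\ the functoriality of $\mF$ \eqref{mF}; the latter is asserted as ``obvious'' in the paper and ultimately rests on Proposition \ref{prop:cU-star-comp} and a small twisting computation, so there is no circularity, but it is an extra ingredient your write-up leans on implicitly at the steps $[G]\circ[\id_B]=[G]\circ[F\circ\widetilde G_0]$ and $[G\circ F]\circ[\widetilde G_0]=[G]\circ[F]\circ[\widetilde G_0]$. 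The paper's sandwich argument buys independence from the enriched-category axioms (it only uses strict associativity of coalgebra-map composition and the twisted maps), while your route buys a cleaner, purely formal finish once the homotopy category is in place. Your reduction of the Goldman--Millson hypotheses to levelwise quasi-isomorphisms on $\cF^{\ari}_m$ together with completeness (Proposition \ref{prop:filtered}) matches the paper's use of \cite[Theorem 2.2]{GMtheorem}, so the point you flag as the ``main obstacle'' is indeed already secured by Proposition \ref{prop:cU-F-q-iso}.
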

\begin{proof} Let us start with the question of existence of $G$.

Due to Proposition \ref{prop:cU-star-comp}, it suffices to 
prove that there exists a MC element $G'$ of $\map(B,A)$ 
such that the $0$-cell
\begin{equation}
\label{G-pr-to-A}
(\cU_{A B A})_* (G') \in \mMC_0(\map(A,A))
\end{equation}
is connected to $\id'_A$ and the $0$-cell  
\begin{equation}
\label{G-pr-to-B}
(\cU_{B A B})_* (G') \in \mMC_0(\map(B,B))
\end{equation}
is connected to $\id'_B$. 

Proposition \ref{prop:cU-F-q-iso} implies that the $\sLie$-morphism
$$
\cU_{A B A} : \map(B,A) \to \map(A,A)
$$
is a quasi-isomorphism and, moreover, it satisfies 
the necessary conditions of Theorem 2.2 from \cite{GMtheorem}.
This theorem, in turn, implies that $\cU_{A B A}$  induces a bijection of sets 
\begin{equation}
\label{cU-ABA-pi0}
\pi_0\Big( \mMC_{\bul}(\map(B,A)) \Big)
 \xto{\cong}
\pi_0\Big( \mMC_{\bul}(\map(A,A)) \Big) \,.
\end{equation}

Therefore, there exists a MC element $G' \in \map(B,A)$ such 
that the $0$-cell 
\begin{equation}
\label{GF-pr}
(G \circ F)'= (\cU_{A B A})_* (G')
\end{equation}
is connected to $\id'_A$\,.

To prove that the $0$-cell $(F\circ G)' = (\cU_{B A B})_* (G')$ is connected to $\id'_B$, 
we consider the $\sLie$-morphisms 
\begin{equation}
\label{ABB}
\cU_{A  B  B} :  \map(B, B)  \to \map(A, B)\,,
\end{equation}
and
\begin{equation}
\label{AAB}
\cU_{AAB} :  \map(A, A)  \to \map(A, B)
\end{equation}
constructed, as above, using the $\infty$-morphism $F$. 

Proposition \ref{prop:cU-star-comp} implies that, if $K'$ is a MC element
of $\map(B, B)$ (resp.  $\map(A, A)$) corresponding to an $\infty$-morphism 
$K$ from $B$ to $B$ (resp. from $A$ to $A$) then the MC element 
$(\cU_{A B B})_*(K')$ (resp. $(\cU_{A A B})_*(K')$)
corresponds to the composition $K \circ F$ (resp. $F \circ K$).

Let us now consider the composition 
$F \circ  G$ and denote by $(F\circ G)'$ the $0$-cell of $\mMC_{\bul}(\map(B,B))$ corresponding 
to the $\infty$-morphism $F \circ G$. 

Due to the above observation, the MC element 
$(F\circ G \circ F)' \in \map(A, B)$ corresponding to the $\infty$-morphism 
$F\circ G \circ F$ satisfies the equations
\begin{equation}
\label{MC-FGF-FG}
(F\circ G \circ F)'  = (\cU_{A B B})_*\, (F\circ G)'
\end{equation}
and 
\begin{equation}
\label{MC-FGF-GF}
(F\circ G \circ F)'  = (\cU_{A A B})_* \, (G\circ F)'\,.
\end{equation}  

Therefore, since the $0$-cell $(G \circ F)'$ is connected to the 
$0$-cell $\id'_A$, the $0$-cell $(F\circ G \circ F)'$ is connected 
to $F'$ in $\mMC_{\bul} (\map(A,B))$\,.  
 
On other hand, $F' = (\cU_{A B B})_* (\id'_B)$, and hence 
the $0$-cells 
$$
(\cU_{A B B })_* (\id'_B) \qquad \textrm{and} \qquad  (\cU_{A B B})_*\, (F\circ G)'
$$
are connected in  $\mMC_{\bul} (\map(A,B))$\,.

Since $F$ is an $\infty$ quasi-isomorphism,  Proposition \ref{prop:cU-F-q-iso} and 
\cite[Theorem 2.2]{GMtheorem} imply that $(\cU_{A B B })_*$ induces 
a bijection 
$$
\pi_0 \Big( \mMC_{\bul} (\map(B,B)) \Big) \xto{\cong}  \pi_0 \Big( \mMC_{\bul} (\map(A,B)) \Big)\,.
$$

Thus the $0$-cells $(F\circ G)'$ and $\id'_B$ are also connected in $\mMC_{\bul} (\map(B,B))$
and the existence of a desired $\infty$-morphism $G$ is proved. 

Let $\wt{G}$ be another $\infty$-morphism from $B$ to $A$ such 
that $0$-cells \eqref{GF-pr} and $\id'_A$ are connected in 
$\mMC_{\bul}( \map(A,A) )$.  

Therefore, since $\cU_{ABA}$ induces bijection \eqref{cU-ABA-pi0} 
and the $0$-cells $(\cU_{ABA})_* (G')$ and $\id'_A$ are connected, 
we conclude that the $0$-cells $\wt{G}'$ and $G'$ are also connected.  
\end{proof}  

Thus we proved the first part of Theorem \ref{thm:pi-0-HoAlg}.  

We would like to conclude this subsection with the observation that 
mapping spaces of the simplicial category $\HoAlg^{\sD}_{\cC}$ enjoy 
the following remarkable property
\begin{cor}
\label{cor:q-iso-gives}
Let $A_1, A_2, A_3$ be $\Cobar(\cC)$-algebras and $F$ be 
an $\infty$ quasi-isomorphism from $A_1$ to $A_2$. Then the composition 
(resp. pre-composition) with $F$ induces the weak equivalences of simplicial sets: 
$$
\mMC_{\bul}(\map(A_3, A_1)) \to \mMC_{\bul}(\map(A_3, A_2))\,,
$$
$$
\mMC_{\bul}(\map(A_2, A_3)) \to \mMC_{\bul}(\map(A_1, A_3))\,.
$$
\end{cor}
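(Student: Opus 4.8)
The plan is to obtain this corollary directly from Proposition~\ref{prop:cU-F-q-iso} and the version of the Goldman--Millson theorem in \cite{GMtheorem}, reusing the very $\sLie$-morphisms constructed in the previous subsection. Recall that the $\infty$ quasi-isomorphism $F$ from $A_1$ to $A_2$ gives rise to the two $\sLie$-morphisms $\cU_{A_3 A_1 A_2} \maps \map(A_3,A_1)\to \map(A_3,A_2)$ and $\cU_{A_1 A_2 A_3} \maps \map(A_2,A_3)\to \map(A_1,A_3)$, and that by Proposition~\ref{prop:cU-star-comp} the induced maps on Maurer--Cartan elements are precisely post-composition with $F$ and pre-composition with $F$, respectively. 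So the first step is just to observe that the two maps in the statement of the corollary are obtained by applying the functor $\mMC_{\bul}$ to these $\sLie$-morphisms.

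Next I would check that these $\sLie$-morphisms satisfy the hypotheses of \cite[Theorem~2.2]{GMtheorem}. This is where Proposition~\ref{prop:cU-F-q-iso} does all the work: it says that $\cU_{A_3 A_1 A_2}$ and $\cU_{A_1 A_2 A_3}$ are compatible with the arity filtrations $\cF^{\ari}_{\bul}$ and, because $F$ is an $\infty$ quasi-isomorphism, restrict to $\sLie$ quasi-isomorphisms $\cF^{\ari}_m \map(-,-)\to \cF^{\ari}_m \map(-,-)$ for every $m\ge 1$. Combined with the completeness of the convolution $\sLie$-algebras with respect to $\cF^{\ari}_{\bul}$ (Proposition~\ref{prop:filtered}), this is exactly a filtered $\sLie$ quasi-isomorphism between complete filtered $\sLie$-algebras, so \cite[Theorem~2.2]{GMtheorem} applies and yields that $\mMC_{\bul}(\cU_{A_3 A_1 A_2})$ and $\mMC_{\bul}(\cU_{A_1 A_2 A_3})$ are weak equivalences of Kan complexes. (The same theorem was already invoked, for the induced bijection on $\pi_0$ only, in the proof of Corollary~\ref{cor:invertible}.)

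Finally, I would identify $\mMC_{\bul}(\cU_{A_3 A_1 A_2})$ and $\mMC_{\bul}(\cU_{A_1 A_2 A_3})$ with the maps induced by post- and pre-composition with $F$ on the full simplicial sets, and not just on $0$-cells. On $0$-cells this is exactly Proposition~\ref{prop:cU-star-comp}; at simplicial level $n$ one tensors the defining formulas \eqref{cU-A-312-pr} and \eqref{cU-A-123-pr} with $\Om_n$ and uses that everything in sight is $\bbk$-linear and $\Om_n$ is flat, so the identification is purely formal. I do not expect a genuine obstacle here: the only point requiring care is matching the precise filtration and completeness hypotheses in the statement of the Goldman--Millson theorem of \cite{GMtheorem}, and this matching has already been arranged by Proposition~\ref{prop:cU-F-q-iso}. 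In effect the corollary is a direct repackaging of results established earlier in this section.
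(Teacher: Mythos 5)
Your proposal is correct and follows exactly the paper's route: the paper proves this corollary in one line, as a direct consequence of Proposition \ref{prop:cU-F-q-iso} together with Theorem 2.2 of \cite{GMtheorem}, applied to the $\sLie$-morphisms $\cU_{A_3 A_1 A_2}$ and $\cU_{A_1 A_2 A_3}$ whose action on MC elements is identified with post- and pre-composition with $F$ by Proposition \ref{prop:cU-star-comp}. Your additional remarks on verifying the filtration/completeness hypotheses and on the identification at higher simplicial levels are sound elaborations of the same argument.
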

\begin{proof}
The desired statement is a direct consequence of Proposition \ref{prop:cU-F-q-iso}
and \cite[Theorem 2.2]{GMtheorem}.
\end{proof}

\subsection{The functor $\mF$ from Theorem \ref{thm:pi-0-HoAlg} has the desired universal property}

The proof of the universal property of the functor $\mF$ is based on
the following proposition: 
\begin{prop}
\label{prop:homot-implies-OK}
Let $\mG : \Cat_{\cC} \to \mD$ be a functor which sends $\infty$ quasi-isomorphisms 
to isomorphisms in $\mD$. Let $A$, $B$ be $\Cobar(\cC)$-algebras and $F, G$ be 
$\infty$-morphisms from $A$ to $B$. If the corresponding 
$0$-cells $F'$ and $G'$ of $\mMC_{\bul}(\map(A,B))$ are connected 
then 
$$
\mG(F) = \mG(G)\,.
$$
\end{prop}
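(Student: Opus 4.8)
The plan is to reduce the statement to the case where $F'$ and $G'$ are connected by a single edge in $\mMC_{\bul}(\map(A,B))$, i.e. there is a $1$-simplex $\Phi \in \MC(\map(A,B) \hotimes \Om_1)$ whose endpoints are $F'$ and $G'$. Since ``being connected'' is generated by such edges and $\mG$ preserves composition, it suffices to handle one edge at a time. So fix such a $\Phi$ and let $\Om_1 = \Om^{\bullet}(\Delta^1)$ be the polynomial de Rham complex of the $1$-simplex, with the two face maps $\Om_1 \to \bbk$ evaluating at the endpoints $0$ and $1$.

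The key idea is to realize the homotopy $\Phi$ as an $\infty$-morphism into an algebra of ``forms on $A$''. Concretely, I would consider the $\Cobar(\cC)$-algebra $B \hotimes \Om_1$ (using that $\Om_1$ is a dg commutative algebra, so tensoring preserves $\Cobar(\cC)$-algebra structures), together with the two evaluation $\infty$-morphisms $\mathrm{ev}_0, \mathrm{ev}_1 : B \hotimes \Om_1 \to B$. The crucial point, which I would check using the definition of the convolution $\sLie$-algebra and the completion identity $\map(A, B\hotimes\Om_1) \cong \map(A,B)\hotimes\Om_1$ (valid by Proposition \ref{prop:filtered} since everything is pro-nilpotent/complete by Lemma \ref{lem:infty-morph}), is that a $1$-simplex $\Phi \in \MC(\map(A,B)\hotimes\Om_1)$ is exactly the datum of an $\infty$-morphism $\widehat\Phi : A \to B \hotimes \Om_1$, and that composing $\widehat\Phi$ with $\mathrm{ev}_0$ (resp. $\mathrm{ev}_1$) recovers $F$ (resp. $G$) as $\infty$-morphisms. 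Here I would use Proposition \ref{prop:cU-star-comp} to identify the composite $\infty$-morphisms $\mathrm{ev}_i \circ \widehat\Phi$ with the image of $\Phi$ under the relevant $\mathrm{ev}_i$-twisted-composition $\sLie$-morphism, whose effect on $0$-cells is precisely evaluation at the endpoint.

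Given this, applying $\mG$ yields $\mG(F) = \mG(\mathrm{ev}_0)\circ\mG(\widehat\Phi)$ and $\mG(G) = \mG(\mathrm{ev}_1)\circ\mG(\widehat\Phi)$, so it remains to show $\mG(\mathrm{ev}_0) = \mG(\mathrm{ev}_1)$ as morphisms $\mG(B\hotimes\Om_1) \to \mG(B)$ in $\mD$. For this I would use that $\mathrm{ev}_0$ and $\mathrm{ev}_1$ are each $\infty$ quasi-isomorphisms: their linear terms are the maps $B\hotimes\Om_1 \to B$ induced by evaluation at a vertex, which are quasi-isomorphisms of cochain complexes because $\Om_1 \to \bbk$ is a quasi-isomorphism of dg algebras (the de Rham complex of the interval is acyclic in positive degree) and $B$ is a complex over a field. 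Hence $\mG(\mathrm{ev}_0)$ and $\mG(\mathrm{ev}_1)$ are both isomorphisms in $\mD$. Moreover, if $\iota : B \to B\hotimes\Om_1$ denotes the unit $\infty$-morphism $b \mapsto b\otimes 1$ (a strict morphism, hence an $\infty$-morphism), then $\mathrm{ev}_0 \circ \iota = \mathrm{ev}_1 \circ \iota = \id_B$, so $\mG(\mathrm{ev}_0)\circ\mG(\iota) = \mG(\mathrm{ev}_1)\circ\mG(\iota) = \id_{\mG(B)}$; since $\mG(\mathrm{ev}_0)$ and $\mG(\mathrm{ev}_1)$ are isomorphisms, this forces $\mG(\mathrm{ev}_0) = \mG(\iota)^{-1} = \mG(\mathrm{ev}_1)$, and the proof is complete.

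The main obstacle I anticipate is the bookkeeping in the second paragraph: carefully verifying that a $1$-simplex of $\mMC_{\bul}(\map(A,B))$ genuinely corresponds to an $\infty$-morphism $A \to B\hotimes\Om_1$ whose endpoint restrictions are $F$ and $G$, with all the signs and completions handled correctly. This requires pinning down the compatibility of the convolution $\sLie$-algebra construction with the base change $B \rightsquigarrow B\hotimes\Om_1$ and with the evaluation maps, i.e. that the $\sLie$-isomorphism $\map(A,B)\hotimes\Om_1 \cong \map(A, B\hotimes\Om_1)$ intertwines twisting-by-a-form with the $\Cobar(\cC)$-algebra structure on $B\hotimes\Om_1$, and that endpoint evaluation on the form side matches the composition-with-$\mathrm{ev}_i$ $\sLie$-morphism from Proposition \ref{prop:cU-star-comp}. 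Everything else is formal once this dictionary is in place.
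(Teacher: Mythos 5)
Your argument is essentially the paper's own proof: you form the cylinder $B \otimes \bbk[t,dt]$, view the connecting $1$-cell as an $\infty$-morphism $K \maps A \to B \otimes \bbk[t,dt]$ whose endpoint evaluations recover $F$ and $G$, and then use that $\mG$ inverts quasi-isomorphisms among the structure maps of the cylinder (the evaluations and the unit inclusion $b \mapsto b \otimes 1$) to force $\mG(p_0)=\mG(p_1)$, hence $\mG(F)=\mG(G)$. The only caveat is that your ``completion identity'' $\map(A,B)\hotimes\Om_1 \cong \map(A,B\otimes\bbk[t,dt])$ is an overstatement---in general one has only a natural strict $\sLie$-morphism $\map(A,B)\hotimes\Om_1 \to \map(A,B\otimes\bbk[t,dt])$, which is exactly what the paper uses---but since this is the only direction your argument needs, nothing breaks.
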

\begin{proof}
By the condition of the proposition, there exists a $1$-cell 
\begin{equation}
\label{K-pr}
K'  \in \Hom(\cC(A), B) \hotimes \bbk[t, \, dt]
\end{equation}
such that 
\begin{equation}
\label{proj-0}
K' \Big|_{t=d t = 0} = F'
\end{equation}
and 
\begin{equation}
\label{proj-1}
K' \Big|_{t=1, ~ d t = 0} = G'\,.
\end{equation}

Since 
$$
\cC(A) = \bigoplus_n \big( \cC(n) \otimes A^{\otimes \, n} \big)_{S_n}
$$
and $\Hom \big(\cC(A), B \big)$ is considered with the
topology coming from filtration \eqref{Conv-filter},
we have the natural strict $\sLie$-morphism
\begin{equation}
\label{isom-hotimes}
\Hom \big(\cC(A), B \big)  \hotimes \bbk[t, \, dt]  ~\to~
\Hom \big( \cC(A), B \otimes  \bbk[t, \, dt] \big) \,.
\end{equation}

Therefore the $1$-cell $K'$ gives us an $\infty$-morphism 
$K$ from $A$ to $B \otimes   \bbk[t, \, dt] $ which fits into 
the following commutative diagram 
\begin{equation}
\label{diag-K}
\begin{tikzpicture}
\matrix (m) [matrix of math nodes, row sep=2em, column sep=4em]
{   ~& ~  & B \\
A & B \otimes   \bbk[t, \, dt] & ~ \\
~ & ~ & B\,,\\ };
\path[->, font=\scriptsize]
(m-2-1) edge node[above] {$K$} (m-2-2) 
edge[bend left=15] node[above] {$F$} (m-1-3)  edge[bend right=15] node[below] {$G$} (m-3-3)
(m-2-2) edge node[above] {$p_0$} (m-1-3)  edge node[below] {$p_1$} (m-3-3);
\end{tikzpicture}
\end{equation}
where $ B \otimes   \bbk[t, \, dt]$ is considered with the differential 
$\pa_{B} + dt \,\pa_t $ and the natural $\Cobar(\cC)$-structure coming 
from the one on $B$. Moreover,  $p_0$ and $p_1$ are the obvious (strict)
morphisms of $\Cobar(\cC)$-algebras
\begin{equation}
\label{p-0}
p_0 (v) : = v \big|_{t=dt=0}  ~:~  B \otimes   \bbk[t, \, dt] \to B 
\end{equation}
and
\begin{equation}
\label{p-1}
p_1 (v) : = v \big|_{t=1,~ dt=0}  ~:~  B \otimes   \bbk[t, \, dt] \to B\,. 
\end{equation}

Let us observe that the maps $p_0$ and $p_1$ fit into the 
commutative diagram 
\begin{equation}
\label{diag-p0p1}
\begin{tikzpicture}
\matrix (m) [matrix of math nodes, row sep=2em, column sep=4em]
{   ~& ~  & B \\
B & B \otimes   \bbk[t, \, dt] & ~ \\
~ & ~ & B\,,\\ };
\path[->, font=\scriptsize]
(m-2-1) edge node[above] {$~~i$} (m-2-2) 
edge[bend left=15] node[above] {$\id_B$} (m-1-3)  edge[bend right=15] node[below] {$\id_B$} (m-3-3)
(m-2-2) edge node[above] {$p_0$} (m-1-3)  edge node[below] {$p_1$} (m-3-3);
\end{tikzpicture}
\end{equation}
where $i: B \to  B \otimes   \bbk[t, \, dt]$ is the natural embedding given by 
$$
i (v) : = v \otimes 1\,.
$$

Applying the functor $\mG$ to \eqref{diag-p0p1}, we get 
$$
\mG(p_0) \circ \mG(i) = \mG(p_1) \circ \mG(i) = \id_{\mG(B)}\,.
$$
Hence, since $i$ is obviously a quasi-isomorphism, we deduce that
$$
\mG(p_0) = \mG(p_1)\,.
$$

Finally, applying  the functor $\mG$ to \eqref{diag-K}, we get 
$$
\mG(p_0) \circ \mG(K) = \mG(F)\,, \qquad 
\mG(p_1) \circ \mG(K) = \mG(G)
$$  
which implies that $\mG(F) = \mG(G)$\,.
\end{proof}

Proposition \ref{prop:homot-implies-OK} motivates the following 
definition\footnote{For several other justifications of this definition, we refer the 
reader to paper \cite{DotsPoncin} by V. Dotsenko and N. Poncin.}
\begin{defi}
\label{dfn:homotopy}
Let $A,B$ be $\Cobar(\cC)$-algebras. We say that $\infty$-morphisms $F,G$ 
from $A$ to $B$ are \emph{homotopic} if the corresponding 
$0$-cells $F'$ and $G'$ are connected in $\mMC_{\bul}(\map(A,B))$.  
\end{defi}   
 
We can now prove the universal property of functor \eqref{mF}.
 
Indeed, let $\mG$ be a functor from $\Cat_{\cC}$ to some category $\mD$ 
which sends $\infty$ quasi-isomorphisms to isomorphisms.

For objects of $A,B, \dots$ of $\pi_0 \big( \HoAlg^{\sD}_{\cC} \big)$ we set 
$$
\mG'(A) : = \mG(A)\,.
$$

Next, given two $\Cobar(\cC)$-algebras $A,B$ and
an isomorphism class 
$$
[F'] \in \pi_0 \big( \mMC_{\bul}(\map(A,B)) \big) 
$$
of a MC element $F'$ in $\map(A,B)$ we set 
\begin{equation}
\label{mG-pr-morph}
\mG([F']) : = \mG(F)\,,
\end{equation}
where $F$ is the $\infty$-morphism from $A$ to $B$ corresponding 
to the MC element $F'$. 
 
Due to Proposition \ref{prop:homot-implies-OK}, the right hand side of 
\eqref{mG-pr-morph} does not depend on the choice of the MC element 
$F'$ in its isomorphism class $[F']$. 

It is clear that, this way, we get a functor 
$$
\mG' : \pi_0 \big( \HoAlg^{\sD}_{\cC} \big) \to \mD
$$
satisfying $\mG' \circ \mF = \mG$\,. 

It is also clear that such a functor $\mG'$ is unique and the 
proof of Theorem \ref{thm:pi-0-HoAlg} is complete. 
We can use the above results to obtain the following converse to
Cor.\ \ref{cor:q-iso-gives}. Together they give a
recognition principal for $\infty$ quasi-isomorphisms, in analogy
with the characterization of weak equivalences via function complexes in a simplicial model category.

\begin{cor}
\label{cor:q-iso-gives-converse}
For every $\infty$-morphism $F \maps A \to B$ of $\Cobar(\cC)$
algebras, the following statements are equivalent:
\begin{enumerate}

\item  $F$ is an $\infty$-quasi-isomorphism. 

\item The $\sLie$-morphisms
\[
\cU_{AAB} \maps \map(A,A) \to \map(A,B)  \quad \textrm{ and } \quad
\cU_{BAB} \maps \map(B,A) \to \map(B,B)
\]
induce homotopy equivalences of simplicial sets
\[
\mMC_{\bul}(\map(A, A)) \xto{\sim} \mMC_{\bul}(\map(A, B)) 
~ \textrm{ and } ~
\mMC_{\bul}(\map(B, A)) \xto{\sim} \mMC_{\bul}(\map(B, B))
\]
respectively. 
 
\item  The $\sLie$-morphisms
\[
\cU_{ABA} \maps \map(B,A) \to \map(A,A) 
\quad \textrm{ and } \quad
\cU_{ABB} \maps \map(B, B) \to \map(A, B) 
\]
induce homotopy equivalences of simplicial sets
\[
\mMC_{\bul}(\map(B, A)) \xto{\sim} \mMC_{\bul}(\map(A, A)) 
~ \textrm{ and } ~
\mMC_{\bul}(\map(B, B)) \xto{\sim} \mMC_{\bul}(\map(A, B))
\]
respectively. 
\end{enumerate}
\end{cor}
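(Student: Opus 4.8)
The plan is to prove $(1)\Rightarrow(2)$, $(1)\Rightarrow(3)$, and then the two converses $(2)\Rightarrow(1)$ and $(3)\Rightarrow(1)$. The forward implications are immediate from Corollary~\ref{cor:q-iso-gives}: by Proposition~\ref{prop:cU-star-comp} the four $\sLie$-morphisms in the statement are exactly the maps induced on mapping spaces by post-composition and pre-composition with $F$ (they are the twisted convolution morphisms $\cU_{A_3A_1A_2}$ and $\cU_{A_1A_2A_3}$ of Section~\ref{sec:HT} specialized to $A_1=A$, $A_2=B$, $A_3\in\{A,B\}$), and Corollary~\ref{cor:q-iso-gives} says that each of these induces a weak equivalence of DGH $\infty$-groupoids as soon as $F$ is an $\infty$ quasi-isomorphism. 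So statements $(2)$ and $(3)$ hold in that case.

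For the converses I would first reduce to a statement about homotopy inverses. The assignment sending an $\infty$-morphism to its linear term \eqref{F-lin-term} is a functor $\Cat_{\cC}\to\Ch_{\bbk}$ which, by the very definition of $\infty$ quasi-isomorphism, sends $\infty$ quasi-isomorphisms to quasi-isomorphisms; composing it with the canonical functor from $\Ch_{\bbk}$ to its localization at quasi-isomorphisms and invoking the universal property in Theorem~\ref{thm:pi-0-HoAlg}, I obtain a functor $\pi_0\bigl(\HoAlg^{\sD}_{\cC}\bigr)\to (\textrm{localized }\Ch_{\bbk})$ through which it factors. Consequently, if I can exhibit an $\infty$-morphism $G\maps B\to A$ for which $G\circ F$ is homotopic to $\id_A$ and $F\circ G$ is homotopic to $\id_B$ (in the sense of Definition~\ref{dfn:homotopy}), then $[F]$ is invertible in $\pi_0\bigl(\HoAlg^{\sD}_{\cC}\bigr)$ with inverse $[G]$, hence the class of the linear term of $F$ is invertible in the localized $\Ch_{\bbk}$, i.e.\ the linear term of $F$ is a quasi-isomorphism and $F$ is an $\infty$ quasi-isomorphism. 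So it remains, under either $(2)$ or $(3)$, to produce such a $G$.

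Assume $(3)$. Since $\cU_{ABA}\maps\map(B,A)\to\map(A,A)$ induces a homotopy equivalence it is in particular surjective on $\pi_0$, so there is an $\infty$-morphism $G\maps B\to A$ with $(\cU_{ABA})_*(G')=(G\circ F)'$ connected to $\id'_A$; thus $G\circ F$ is homotopic to $\id_A$. Applying to this homotopy the $\sLie$-morphism $\cU_{AAB}\maps\map(A,A)\to\map(A,B)$ (post-composition with $F$, which I do \emph{not} need to be invertible, only a map of simplicial sets), and using $(\cU_{AAB})_*\bigl((G\circ F)'\bigr)=(F\circ G\circ F)'$ and $(\cU_{AAB})_*(\id'_A)=F'$, I get that $F\circ G\circ F$ is homotopic to $F$. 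On the other hand $(F\circ G\circ F)'=(\cU_{ABB})_*\bigl((F\circ G)'\bigr)$ and $F'=(\cU_{ABB})_*(\id'_B)$, and $\cU_{ABB}\maps\map(B,B)\to\map(A,B)$ is injective on $\pi_0$ by $(3)$; hence $(F\circ G)'$ is connected to $\id'_B$, i.e.\ $F\circ G$ is homotopic to $\id_B$. By the previous paragraph $F$ is then an $\infty$ quasi-isomorphism. The implication $(2)\Rightarrow(1)$ is entirely parallel: use that $\cU_{BAB}$ is surjective on $\pi_0$ to obtain $G$ with $F\circ G$ homotopic to $\id_B$, then apply the auxiliary morphism $\cU_{ABB}$ (pre-composition with $F$) to get $F\circ G\circ F$ homotopic to $F$, and finally use that $\cU_{AAB}$ is injective on $\pi_0$ to conclude $G\circ F$ homotopic to $\id_A$.

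There is no deep obstacle here once Corollary~\ref{cor:q-iso-gives}, Theorem~\ref{thm:pi-0-HoAlg}, and the Goldman--Millson-type input of \cite{GMtheorem} (already used in Proposition~\ref{prop:cU-F-q-iso} and Corollary~\ref{cor:invertible}) are in hand. The one point that I expect to require care is the bookkeeping: matching each use of ``$\pi_0$-surjective'' and ``$\pi_0$-injective'' to the correct one of the two hypothesized equivalences, and observing that the third composition morphism occurring in the sandwich $F\circ G\circ F$ enters only through the functoriality of $\mMC_{\bul}$ and need not itself be assumed invertible.
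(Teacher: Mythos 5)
Your proposal is correct, and its core bookkeeping coincides with the paper's: the forward implications are quoted from Corollary \ref{cor:q-iso-gives}, and for the converse you produce $G$ from $\pi_0$-surjectivity of one hypothesized equivalence, push the resulting homotopy through the auxiliary (not assumed invertible) composition morphism, and use $\pi_0$-injectivity of the other to get a two-sided homotopy inverse --- exactly the paper's middle steps for $2\Rightarrow 1$, and you also spell out $3\Rightarrow 1$, which the paper leaves to the reader. Where you genuinely diverge is the finish. The paper converts the $1$-cells connecting $F\circ G$ to $\id_B$ and $G\circ F$ to $\id_A$ into explicit chain homotopies $s_B$, $s_A$ on the linear terms (via the cylinder $B\otimes\bbk[t,dt]$ and integration over the fiber), concluding directly that $f$ is surjective and injective on cohomology. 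You instead invoke the universal property of Theorem \ref{thm:pi-0-HoAlg}: since $[F]$ is invertible in $\pi_0\big(\HoAlg^{\sD}_{\cC}\big)$, any functor out of $\Cat_{\cC}$ inverting $\infty$ quasi-isomorphisms sends $F$ to an isomorphism, and applying this to ``linear term followed by localization of $\Ch_{\bbk}$'' forces $f$ to be a quasi-isomorphism. This is valid and arguably slicker, but note two points you elide: (i) functoriality of the linear term (the linear term of a composite is the composite of linear terms --- true, using $\cC(0)=\bfzero$, but worth a line); and (ii) the saturation of quasi-isomorphisms, i.e.\ that a chain map invertible in $\Ch_{\bbk}[\textrm{qis}^{-1}]$ is itself a quasi-isomorphism (plus the existence of that localization as a category). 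Both issues evaporate if you replace the localization by the functor $A\mapsto H^{\bul}(A)$ to graded vector spaces, which inverts $\infty$ quasi-isomorphisms by definition and detects quasi-isomorphisms; I would recommend that formulation. Also be aware that the paper's Proposition \ref{prop:homot-implies-OK}, on which the universal property rests, is itself proved by the same cylinder-and-evaluation construction, so your argument repackages rather than avoids that ingredient; what it buys is that the explicit homotopies $s_A$, $s_B$ need not be written down, at the cost of routing through Theorem \ref{thm:pi-0-HoAlg}.
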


\begin{proof}
The implications $1 \Rightarrow 2$ and $1 \Rightarrow 3$ are particular 
cases of  Corollary \ref{cor:q-iso-gives}. Let us prove the implication $2 \Rightarrow 1$. 

Since $F$ induces a homotopy equivalence of simplicial sets
\[
\mMC_{\bul}(\map(B, A)) \xto{\sim} \mMC_{\bul}(\map(B, B)),
\]
we have an isomorphism
\[
\pi_0 \Bigl(\mMC_{\bul}(\map(B, A))\Bigr ) \cong \pi_0
\Bigl(\mMC_{\bul}(\map(B, B)) \Bigr),
\]
which implies that there exists an $\infty$-morphism $G \maps B \to A$
such that $F \circ G$ and $\id_B$ are homotopic, in the sense of Def.\ \ref{dfn:homotopy}.
As in the proof of Prop.\ \ref{prop:homot-implies-OK}, this gives a
commutative diagram of $\Cobar(\cC)$ algebras and $\infty$-morphisms:
\begin{equation*}
\begin{tikzpicture}
\matrix (m) [matrix of math nodes, row sep=2em, column sep=4em]
{   ~& ~  & B \\
B & B \otimes   \bbk[t, \, dt] & ~ \\
~ & ~ & B\,\\ };
\path[->, font=\scriptsize]
(m-2-1) edge node[above] {$H$} (m-2-2) 
edge[bend left=15] node[above] {$F\circ G$} (m-1-3)  edge[bend right=15] node[below] {$\id_B$} (m-3-3)
(m-2-2) edge node[above] {$p_0$} (m-1-3)  edge node[below] {$p_1$} (m-3-3);
\end{tikzpicture}
\end{equation*}
By taking the linear terms of the $\infty$-morphisms, this diagram, in turn, gives a diagram of cochain complexes
\begin{equation*}
\begin{tikzpicture}
\matrix (m) [matrix of math nodes, row sep=2em, column sep=4em]
{   ~& ~  & B \\
B & B \otimes   \bbk[t, \, dt] & ~ \\
~ & ~ & B\,,\\ };
\path[->, font=\scriptsize]
(m-2-1) edge node[above] {$h$} (m-2-2) 
edge[bend left=15] node[above] {$f\circ g$} (m-1-3)  edge[bend right=15] node[below] {$\id_B$} (m-3-3)
(m-2-2) edge node[above] {$p_0$} (m-1-3)  edge node[below] {$p_1$} (m-3-3);
\end{tikzpicture}
\end{equation*}
where $h$ and $f\circ g$ are the linear terms $p_{B\tensor \bbk[t,dt]}
\circ H \vert_{B}$, and $p_{B} \circ F\circ G \vert_{B}$, respectively.
Let $I \maps B\tensor \bbk[t,dt] \to B$ denote  ``integration over the fiber'', i.e.
the degree $-1$ linear map
\[
I\Bigl( b \tensor q(t) + \tilde{b} \tensor \tilde{q}(t)dt \Bigr) = (-1)^{\vert
  \tilde{b} \vert} ~ \tilde{b}\int^{1}_{0} \tilde{q}(t)dt.
\]
Then one can show that the map $s_B \maps B \to B$ defined as
\[
s_B : = I \circ h
\]
is a chain homotopy
\[
\id_B - f \circ g  = \partial_B s_B + s_B \partial_B.
\]
Hence, the map $f$ is surjective on cohomology. 

Now consider the $\sLie$-morphism
\[
\cU_{ABB} \maps \map(B,B) \to \map(A,B) 
\]
which sends a MC element $K'$ to $(K \circ F)'$. This gives a map
between sets
\[
\pi_0 \Bigl(\mMC_{\bul}(\map(B, B))\Bigr ) \to \pi_0\Bigl(\mMC_{\bul}(\map(A, B)) \Bigr).
\]
Since the MC elements $(F \circ G)'$ and $(\id_B)'$ are connected by a $1$-cell, 
we conclude that $F'$ is connected to $(F \circ G \circ F)'$. 
By the first part of statement 2, the $\sLie$-morphism
\[
\cU_{AAB} \maps \map(A,A) \to \map(A,B) 
\]
induces a bijection
\[
\pi_0 \Bigl(\mMC_{\bul}(\map(A, A))\Bigr ) \cong \pi_0
\Bigl(\mMC_{\bul}(\map(A, B)) \Bigr).
\]
sending $[(\id_A)']$ to $[F']$,
and $[(G \circ F)']$ to $[(F \circ G \circ F)']=[F']$. Hence, we
see that $\id_A$ is homotopic to $G \circ F$. We produce a chain
homotopy $s_A \maps A \to A$  using the same construction as before,
which shows that $f$ is also injective on cohomology. Hence, $F \maps A \to
B$ is an $\infty$-quasi-isomorphism.

The proof of the implication $3 \Rightarrow 1$ is very similar to that of $2 \Rightarrow 1$. 
So we leave it to the reader.
\end{proof}

\section{The Homotopy Transfer Theorem is a simple consequence of the Goldman-Millson theorem}
\label{sec:HTT}

In this section we give an elegant proof of the Homotopy Transfer Theorem
for $\Cobar(\cC)$-algebras. This proof is based on a construction from \cite{DefHomotInvar}
and a version of the Goldman-Millson theorem from  \cite{GMtheorem}.

Let us consider a dg cooperad $\cC$ for which $\cC_{\c}$ carries 
ascending filtration \eqref{cC-circ-filtr} satisfying condition \eqref{cocomplete}
and let $A$, $B$ be cochain complexes. 

In  \cite[Section 3.1]{DefHomotInvar}, we equipped the cochain complex 
\begin{equation}
\label{Cyl-cC-A-B}
\Cyl(\cC, A, B) : = \bsi \Hom(\cC_{\c}(A), A) ~\oplus~ \Hom(\cC(A), B)~ \oplus~ 
 \bsi \Hom(\cC_{\c}(B), B)
\end{equation}
with a natural $\sLie$-algebra structure\footnote{In \cite{DefHomotInvar}, we actually 
introduce an $L_{\infty}$-structure on the suspension of \eqref{Cyl-cC-A-B}. But the latter is,
of course, equivalent to introducing a $\sLie$-algebra structure on  \eqref{Cyl-cC-A-B}.} 

According to \cite[Section 3.1]{DefHomotInvar}, MC elements of $\Cyl(\cC, A, B)$ are 
triples: 
\begin{itemize}

\item a $\Cobar(\cC)$-algebra structure on $A$, 

\item a $\Cobar(\cC)$-algebra structure on $B$, and 

\item an $\infty$-morphism from $A$ to $B$\,.

\end{itemize}

In particular, any chain map $\vf : A \to B$ gives a MC element $Q_{\vf}$
corresponding to the zero  $\Cobar(\cC)$-algebra structures on $A$, $B$, 
and the strict $\infty$-morphism from $A$ to $B$.

Let us twist the $\sLie$-algebra structure on  \eqref{Cyl-cC-A-B}, 
and denote the new $\sLie$-algebra by 
\begin{equation}
\label{Cyl-cC-AB-twisted}
\Cyl(\cC, A, B)^{Q_{\vf}}\,.
\end{equation}

It is not hard to see that the graded subspace
\begin{equation}
\label{Cyl-cC-0-AB}
\Cyl_{\c}(\cC, A, B)^{Q_{\vf}} : = \bsi \Hom(\cC_{\c}(A), A) ~\oplus~ \Hom(\cC_{\c}(A), B)~ \oplus~ 
 \bsi \Hom(\cC_{\c}(B), B)
\end{equation}
is a $\sLie$-subalgebra of \eqref{Cyl-cC-AB-twisted} and 
filtration  \eqref{cC-circ-filtr} on $\cC_{\c}$ allows us to equip 
\eqref{Cyl-cC-0-AB} with a natural complete descending filtration 
$\cF_{\bul} \Cyl_{\c}(\cC, A, B)^{Q_{\vf}}$ (see Remark 2 in \cite[Section 3.2]{DefHomotInvar})
such that 
\begin{equation}
\label{Cyl-cC-0-AB-OK} 
\Cyl_{\c}(\cC, A, B)^{Q_{\vf}} = \cF_{1} \Cyl_{\c}(\cC, A, B)^{Q_{\vf}}\,.
\end{equation}
In other words, \eqref{Cyl-cC-0-AB} is a filtered $\sLie$-algebra.

Furthermore, according to \cite[Section 3.2]{DefHomotInvar},  
MC elements of \eqref{Cyl-cC-0-AB} are in bijection with triples: 
\begin{itemize}

\item a $\Cobar(\cC)$-algebra structure on $A$, 

\item a $\Cobar(\cC)$-algebra structure on $B$, and 

\item an $\infty$-morphism $F$ from $A$ to $B$ whose linear term is $\vf$.

\end{itemize}

The Homotopy Transfer Theorem can be now formulated as follows: 
\begin{thm}
\label{thm:HTT}
Let $B$ be a $\Cobar(\cC)$-algebra, $A$ be a cochain complex 
and $\vf : A \to B$ be a quasi-isomorphism of cochain complexes. 
Then there exists a  $\Cobar(\cC)$-algebra structure $Q_A$ on $A$, 
a $\Cobar(\cC)$-algebra structure $Q_B$ on $B$ (which is homotopy 
equivalent to the original one) and an $\infty$-morphism $F$ from 
$(A, Q_A)$ to $(B,Q_B)$ whose linear term is $\vf$. If 
$(\ti{Q}_A, \ti{Q}_B, \ti{F})$ is another triple
satisfying the above properties then the MC elements 
corresponding to $(Q_A, Q_B, F)$ and  $(\ti{Q}_A, \ti{Q}_B, \ti{F})$
are isomorphic in 
$$
\mMC_{\bul} \big(\, \Cyl_{\c}(\cC, A, B)^{Q_{\vf}}\, \big)\,.
$$
\end{thm}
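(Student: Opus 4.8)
The plan is to deduce everything from a single application of the Goldman--Millson-type statement \cite[Theorem 2.2]{GMtheorem}, in the same spirit as the proof of Corollary \ref{cor:invertible}. Consider the projection onto the third summand in \eqref{Cyl-cC-0-AB},
\[
\pi_B ~:~ \Cyl_{\c}(\cC, A, B)^{Q_{\vf}} ~\longrightarrow~ \bsi \Hom(\cC_{\c}(B), B)\,.
\]
First I would check that $\pi_B$ is a \emph{strict} surjective morphism of filtered $\sLie$-algebras: its kernel
\[
\cK ~:=~ \bsi \Hom(\cC_{\c}(A), A) ~\oplus~ \Hom(\cC_{\c}(A), B)
\]
is a filtered $\sLie$-ideal of $\Cyl_{\c}(\cC, A, B)^{Q_{\vf}}$, since no multi-bracket of \eqref{Cyl-cC-0-AB} with at least one argument in $\cK$ can return a vector supported in the $B$-slot $\bsi\Hom(\cC_{\c}(B),B)$. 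On Maurer--Cartan sets, $\pi_B$ sends a triple $(Q_A, Q_B, F)$ --- a $\Cobar(\cC)$-structure on $A$, a $\Cobar(\cC)$-structure on $B$, and an $\infty$-morphism of linear term $\vf$ --- to the $\Cobar(\cC)$-structure $Q_B$ on $B$, and $\MC\big(\bsi\Hom(\cC_{\c}(B),B)\big)$ is exactly the set of $\Cobar(\cC)$-structures on $B$.

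The main step --- and I expect it to be the only real work --- is to show that $\pi_B$ is a quasi-isomorphism on each level of the arity filtration; equivalently, that the ideal $\cK$, equipped with the differential obtained from the $\sLie$-structure of \eqref{Cyl-cC-0-AB} twisted by $Q_{\vf}$, is acyclic, and stays so on each $\cF_m \cK$. Since the two structure components of $Q_{\vf}$ vanish, on the associated graded of the arity filtration the twisted differential on $\cK$ reduces to the internal differential together with post-composition by $\vf$; in other words $\cK$ becomes (a shift of) the mapping cone of
\[
\vf \circ (-) ~:~ \Hom\big(\cC_{\c}(A), A\big) ~\longrightarrow~ \Hom\big(\cC_{\c}(A), B\big)\,.
\]
Because $\vf$ is a quasi-isomorphism and the functor $\Hom(-,-)$ preserves quasi-isomorphisms in $\Ch_{\bbk}$, this map is a quasi-isomorphism, so its cone is acyclic; completeness of the filtration on $\Cyl_{\c}(\cC,A,B)^{Q_{\vf}}$ together with Lemma D.1 of \cite{DeligneTw} (applied to the cone of $\pi_B$, exactly as in the proof of Proposition \ref{prop:cU-F-q-iso}) then upgrades this to acyclicity of $\cK$ and of every $\cF_m\cK$. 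Hence $\pi_B$ is a filtered quasi-isomorphism, and \cite[Theorem 2.2]{GMtheorem} yields a weak equivalence of Kan complexes
\[
\mMC_{\bul}\big(\Cyl_{\c}(\cC, A, B)^{Q_{\vf}}\big) ~\xto{\sim}~ \mMC_{\bul}\big(\bsi\Hom(\cC_{\c}(B),B)\big)\,,
\]
and in particular a bijection on $\pi_0$.

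It remains to read off existence and uniqueness. The original $\Cobar(\cC)$-structure on $B$ is a $0$-cell of $\mMC_{\bul}\big(\bsi\Hom(\cC_{\c}(B),B)\big)$; surjectivity of the $\pi_0$-bijection produces a $0$-cell of $\mMC_{\bul}\big(\Cyl_{\c}(\cC, A, B)^{Q_{\vf}}\big)$, i.e.\ a triple $(Q_A, Q_B, F)$ with $F$ of linear term $\vf$ (by the description of the Maurer--Cartan set of \eqref{Cyl-cC-0-AB}) whose image $\pi_B(Q_A, Q_B, F) = Q_B$ is connected in $\mMC_{\bul}\big(\bsi\Hom(\cC_{\c}(B),B)\big)$ to the original structure --- that is, $Q_B$ is a $\Cobar(\cC)$-structure on $B$ homotopy equivalent to it. This is the desired triple. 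For uniqueness, if $(\ti{Q}_A, \ti{Q}_B, \ti{F})$ is another triple with the stated properties, then $\ti{Q}_B = \pi_B(\ti{Q}_A, \ti{Q}_B, \ti{F})$ and $Q_B = \pi_B(Q_A, Q_B, F)$ are both homotopy equivalent to the original structure on $B$, hence lie in the same $\pi_0$-class downstairs; injectivity of the $\pi_0$-bijection forces the two triples to lie in the same $\pi_0$-class of $\mMC_{\bul}\big(\Cyl_{\c}(\cC, A, B)^{Q_{\vf}}\big)$, i.e.\ to be isomorphic there. The one delicate point in the whole argument is the precise identification of the twisted differential on $\cK$ and the ensuing acyclicity; everything else is bookkeeping with the cylinder $\sLie$-algebra of \cite{DefHomotInvar} and a direct invocation of \cite[Theorem 2.2]{GMtheorem}.
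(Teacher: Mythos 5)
Your proposal is correct and takes essentially the same route as the paper: the paper's proof also consists of showing that the projection $\pi_B \colon \Cyl_{\c}(\cC, A, B)^{Q_{\vf}} \to \bsi \Hom(\cC_{\c}(B), B)$ is a strict quasi-isomorphism compatible with the filtrations and a quasi-isomorphism on each filtration level (quoting this from \cite[Proposition 3.2]{DefHomotInvar} rather than re-deriving it, as you do, from the identification of the kernel with the cone of $\vf \circ (-)$), and then applying the Goldman--Millson theorem (the paper invokes Theorem 1.1 of \cite{GMtheorem} for strict morphisms, while your use of Theorem 2.2 also applies) and reading off existence and uniqueness exactly as you describe. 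The only detail to adjust is that the relevant filtration on $\Cyl_{\c}(\cC, A, B)^{Q_{\vf}}$ is the one induced by the cooperad filtration \eqref{cC-circ-filtr} (see Remark 2 in \cite[Section 3.2]{DefHomotInvar}), not the arity filtration, though your associated-graded argument goes through verbatim with that filtration.
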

\begin{proof}
Let us identify the graded vector space of 
the convolution Lie algebra 
$$
\Conv(\cC_{\c}, \End_B)
$$
with $\Hom(\cC_{\c}(B), B)$ and consider $\bsi \Hom(\cC_{\c}(B), B)$ with 
the corresponding $\sLie$-algebra structure.  

Due to \cite[Proposition 3.2]{DefHomotInvar}, the canonical projection 
\begin{equation}
\label{pi-B}
\pi_B ~:~   \Cyl_{\c}(\cC, A, B)^{Q_{\vf}} \to \bsi \Hom(\cC_{\c}(B), B)
\end{equation}
is a strict quasi-isomorphism of $\sLie$-algebras which is 
obviously compatible with the descending filtrations coming 
from  \eqref{cC-circ-filtr}.

Using the same arguments, as in the proof of  \cite[Proposition 3.2]{DefHomotInvar},
it is easy to see that 
$$
\pi_B ~:~   \cF_m \Cyl_{\c}(\cC, A, B)^{Q_{\vf}} \to \bsi \cF_m \Hom(\cC_{\c}(B), B) 
$$
is a quasi-isomorphism of cochain complexes for every $m \ge 1$. 

Therefore, applying Theorem 1.1 from \cite{GMtheorem} to \eqref{pi-B}, 
we conclude that $\pi_B$ induces a weak equivalence of simplicial sets 
$$
\mMC_{\bul} \big(\, \Cyl_{\c}(\cC, A, B)^{Q_{\vf}}\, \big) \to 
\mMC_{\bul} \big( \bsi \Hom(\cC_{\c}(B), B) \big)
$$
and hence a bijection
$$
\pi_0\Big( \mMC_{\bul} \big(\, \Cyl_{\c}(\cC, A, B)^{Q_{\vf}}\, \big) \Big) \to 
\pi_0\Big( \mMC_{\bul} \big( \bsi \Hom(\cC_{\c}(B), B) \big) \Big)\,.
$$

Thus Theorem \ref{thm:HTT} is a simple consequence of the fact 
that MC elements of the $\sLie$-algebra 
\begin{equation}
\label{bsi-Hom-cCB-B}
\bsi \Hom(\cC_{\c}(B), B)
\end{equation}
are in bijection with 
$\Cobar(\cC)$-algebra structures on $B$. Moreover, homotopy equivalent 
$\Cobar(\cC)$-algebra structures on $B$ correspond precisely to isomorphic 
MC elements of \eqref{bsi-Hom-cCB-B}.
\end{proof}

\begin{remark}
\label{rem:HTT}
In the usual version of the Homotopy Transfer Theorem (see  \cite[Theorem 10.3.2]{LV}) 
one constructs a $\Cobar(\cC)$-algebra structure on $A$ and an $\infty$ quasi-isomorphism 
$F$ from $A$ to $B$ with the original $\Cobar(\cC)$-algebra structure, while in the 
above theorem, $F$ lands in $(B, Q_B)$ where $Q_B$ is only homotopy equivalent to 
the original one. On the other hand, given an isomorphism connecting two MC elements
$\ti{Q}_B$ and $Q_B$ in 
$$
\mMC_{\bul} \big( \bsi \Hom(\cC_{\c}(B), B) \big)\,,
$$
it is easy to construct an $\infty$-morphism $G$ from $(B, Q_B)$ to 
$(B, \ti{Q}_B)$ whose linear term is $\id_B$\,. So the ``usual'' version of 
the Homotopy Transfer Theorem follows from Theorem \ref{thm:HTT}. 
\end{remark}

\appendix

\section{Proof of Proposition \ref{prop:Conv-V-A}}
\label{app:proof-Conv}

Although very similar claims to Proposition \ref{prop:Conv-V-A}
appeared in the literature (see, for example, \cite{Berglund}, 
\cite{FMYau}, \cite{KhPQ}, \cite{MVnado1}) we still decided to give its proof
for convenience of the reader.  

Let us denote by $Q'_A$ the composition 
$$
p_A \circ Q_A : \cC(A) \to A\,.
$$

To prove that multi-bracket \eqref{m-bracket} is symmetric 
in its arguments, we let
\begin{equation}
\label{Delta-m}
\D_m(v) = \sum_{\al} (\ga_{\al}; v^{\al}_1, v^{\al}_{2}, \dots, v^{\al}_m)
\end{equation}
and recall that $\D_m(v)$ lands in $S_m$-invariants of $ \cC(m) \otimes V^{\otimes\, m} $.
In other words, for every $\si \in S_m$ we have 
\begin{equation}
\label{S-m-invar}
\sum_{\al} \big( \si^{-1}(\ga_{\al}); v^{\al}_{\si(1)}, v^{\al}_{\si(2)}, \dots, v^{\al}_{\si(m)} \big) =
 \sum_{\al} (\ga_{\al}; v^{\al}_1, v^{\al}_{2}, \dots, v^{\al}_m)\,.
\end{equation}

Therefore, for every $\si \in S_m$, we have 
\begin{multline*}
\{f_{\si(1)}, \dots, f_{\si(m)}\} (v) = \\
\sum_{\al} Q'_A \circ (1 \otimes f_{\si(1)}\otimes \dots \otimes f_{\si(m)} ) 
 \big( \si^{-1}(\ga_{\al}); v^{\al}_{\si(1)}, v^{\al}_{\si(2)}, \dots, v^{\al}_{\si(m)} \big)=\\
\sum_{\al} \pm\, Q'_A \big( \si^{-1}(\ga_{\al});   f_{\si(1)}(v^{\al}_{\si(1)}), 
f_{\si(2)}(v^{\al}_{\si(2)}), \dots, f_{\si(m)}(v^{\al}_{\si(m)}) \big)\,.
\end{multline*}

Thus, since $Q_A$ is compatible with the action of the symmetric group, 
\begin{multline*}
\{f_{\si(1)}, \dots, f_{\si(m)}\} (v) =\\
\sum_{\al} \pm\, Q'_A \big( \si^{-1}(\ga_{\al});   f_{\si(1)}(v^{\al}_{\si(1)}), 
f_{\si(2)}(v^{\al}_{\si(2)}), \dots, f_{\si(m)}(v^{\al}_{\si(m)}) \big) =\\
\sum_{\al} \pm\, Q'_A \big( \ga_{\al};   f_{1}(v^{\al}_{1}), 
f_{2}(v^{\al}_{2}), \dots, f_{m}(v^{\al}_{m}) \big) =\\
 \sum_{\al} \pm Q'_A \circ (1 \otimes f_{1}\otimes \dots \otimes f_{m} )
 (\ga_{\al}; v^{\al}_1, v^{\al}_{2}, \dots, v^{\al}_m) =
\pm \{f_{1}, \dots, f_{m}\} (v)\,. 
\end{multline*}
So multi-bracket \eqref{m-bracket} is indeed symmetric\footnote{In the above calculations, 
the sign factors can be easily deduced from the Koszul rule of signs.} 
in its arguments.

Let us now prove that the operation
\begin{equation}
\label{diff-Conv}
f \mapsto \{f\}
\end{equation}
is a differential on  $\Hom(V, A)$, i.e. $\{\{f\}\} =0$ for every 
$f \in  \Hom(V, A)$. 

Indeed, using the identities $\pa^2_A =0$, $\pa^2_{V} = 0$
and the compatibility of $\D_1$ with the differentials 
on $V$ and $\cC(1) \otimes V$, we get 
\begin{multline}
\label{1-brack-square}
\{\{f\}\}(v) = \pa_{A}\, \{f\}(v) -(-1)^{|f|+1} \{f\}(\pa_{V}\, v)  +  Q'_A\big( 1 \otimes \{f\} (\D_1(v)) \big) =\\
\pa_A \circ Q'_A \circ (1 \otimes f) \circ \D_1(v) + 
(-1)^{|f|} Q'_A \circ (1 \otimes f) \big( (\pa_{\cC} + \pa_V) \D_1(v) \big) \\
+ Q'_A \circ (1\otimes (\pa_A \circ f)) \big(\D_1 (v) \big)
-(-1)^{|f|} Q'_A \circ (1\otimes (f\circ \pa_V)) \big(\D_1 (v) \big) \\ 
+ Q'_A \circ (1 \otimes Q'_A) \circ (1\otimes 1 \otimes f) \big( (1\otimes \D_1)\circ \D_1 (v) \big)
\end{multline}
for every $v \in V$\,. 
 
Using the axioms of a coalgebra over a cooperad, we rewrite the 
expression $(1\otimes \D_1)\circ \D_1 (v) $ in \eqref{1-brack-square} as 
follows
\begin{equation}
\label{D1D1}
(1\otimes \D_1)\circ \D_1 (v) = (\D^{\cC}  \otimes 1) \circ \D_1 (v)\,,
\end{equation}
where $\D^{\cC}$ is the cooperadic comultiplication
$$
\D^{\cC} : \cC(1) \to \cC(1) \otimes \cC(1)\,.
$$

Therefore the expression $\{\{f\}\}(v)$ can be rewritten as 
\begin{equation}
\label{1-brack-sq-easy}
\{\{f\}\}(v) =  \big(\pa_{A} \circ Q'_A + Q'_A \circ (\pa_{\cC} + \pa_A) + Q'_A \circ Q_A 
\big) \circ (1 \otimes f) \circ \D_1 (v) \,.
\end{equation}

Thus the identity $ \{\{f\}\} =0 $ is a consequence of the 
MC equation for $Q'_A$. 

Our goal now is to prove the relation 
\begin{equation}
\label{goal-brack}
\sum_{p=1}^{m} 
\sum_{\si \in \Sh_{p, m-p} } 
(-1)^{\ve(\si; f_1, \dots, f_m)}
\{ \{f_{\si(1)}, \dots, f_{\si(p)} \}, f_{\si(p+1)}, \dots, f_{\si(m)} \} (v) =0
\end{equation}
for every $m \ge 2$ and $v \in V$\,, where the sign factor 
$(-1)^{\ve(\si; f_1, \dots, f_m)}$ is defined in \eqref{ve-si-vvv}. 

In our calculations below, we often put $\pm$ instead of 
the precise sign factor. These sign factors can be easily deduced 
from the Koszul rule of signs.

Unfolding \eqref{goal-brack} we get
\begin{multline}
\label{goal-unfolded}
\sum_{p=1}^{m} 
\sum_{\si \in \Sh_{p, m-p} } 
(-1)^{\ve(\si; f_1, \dots, f_m)}
\{ \{f_{\si(1)}, \dots, f_{\si(p)} \}, f_{\si(p+1)}, \dots, f_{\si(m)} \} (v) = \\
\pa_{A} \big( \{f_1, \dots, f_m\} (v)\big) + (-1)^{|f_1| + \dots + |f_m|} 
 \{f_1, \dots, f_m\} \big(\pa_{V}(v) \big) \\
+ \sum_{i=1}^{m} (-1)^{|f_1| + \dots + |f_{i-1}|} \{f_1, \dots, (\pa_{A} \circ f_i - (-1)^{|f_i|} f_i \circ \pa_{V} ), \dots,  f_m\} (v)\\
+ \sum_{ \substack{1 \le  p \le  m\\[0.1cm] 
\si \in \Sh_{p, m-p} } } (-1)^{\ve(\si; f_1, \dots, f_m)}
 Q'_{A}\big(1; Q'_{A}\big( (1; f_{\si(1)}, \dots, f_{\si(p)}) \D_{p}(-)\big) , f_{\si(p+1)}, \dots, f_{\si(m)} (\D_{m-p+1}(v))\big)\,.
\end{multline}

Expanding the expression $\pa_{A} \big( \{f_1, \dots, f_m\} (v)\big)$, we get
\begin{eqnarray*}
\pa_{A} \big( \{f_1, \dots, f_m\} (v)\big) &=& \pa_{A}Q'_A \big( 1; f_{1},\dots, f_{m}  
(\D_m (v))  \big)\\
&=& \sum_{\al}\pa_{A}Q'_A \big( (1; f_{1}, \dots, f_{m}  )
( \ga_{\al}; v^{\al}_1, v^{\al}_{2}, \dots, v^{\al}_m))\\
&=& \sum_{\al}\pm\pa_{A}Q'_A \big( \ga_{\al}; f_{1}(v^{\al}_1), \dots, f_{m}(v^{\al}_m)).
\end{eqnarray*}

Using
$$
\D_m(\pa_V(v)) = 
\sum_{\al} (\pa_{\cC}(\ga_{\al}); v^{\al}_1, v^{\al}_{2}, \dots, v^{\al}_m) + 
\sum_{ \substack{\al \\[0.1cm] 1\leq i\leq m}}\pm(\ga_{\al}; v^{\al}_1, \dots, \pa_V(v^{\al}_{i}),\dots, v^{\al}_m)\,,
$$
we expand $\{f_1, \dots, f_m\} \big(\pa_{V}(v) \big)$ obtaining
\begin{eqnarray*}
\{f_1, \dots, f_m\} \big(\pa_{V}(v) \big) &=& Q'_A \big( 1; f_{1}, \dots, f_{m}  
(\D_m (\pa_{V}(v)))  \big)\\
&=&\sum_{\al} Q'_A \big( (1; f_{1}, \dots, f_{m} ) 
(\pa_{\cC}(\ga_{\al}); v^{\al}_1, v^{\al}_{2}, \dots, v^{\al}_m)  \big)\\
&+&  \sum_{ \substack{\al \\[0.1cm] 1\leq i\leq m}}\pm Q'_A \big( (1; f_{1}, \dots, f_{m} )(\ga_{\al}; v^{\al}_1, \dots, \pa_V(v^{\al}_{i}),\dots, v^{\al}_m)\big)\\
&=&\sum_{\al} \pm Q'_A \big( \pa_{\cC}(\ga_{\al}); f_{1}(v^{\al}_1), \dots, f_{m}(v^{\al}_m) \big) \\
&+& \sum_{ \substack{\al \\[0.1cm] 1\leq i\leq m}} \pm Q'_A \big(\ga_{\al}; f_{1}(v^{\al}_1), \dots, f_i(\pa_V(v^{\al}_{i})),\dots, f_{m}(v^{\al}_m)\big).
\end{eqnarray*}

Expanding the sum 
$$
\sum_{i=1}^{m} (-1)^{|f_1| + \dots + |f_{i-1}|} \{f_1, \dots, (\pa_{A} \circ f_i - (-1)^{|f_i|} f_i \circ \pa_{V} ), \dots,  f_m\} (v)
$$
we obtain
\begin{multline}
\label{brack-f-df-fd-f}
\sum_{i=1}^{m} (-1)^{|f_1| + \dots + |f_{i-1}|} \{f_1, \dots, (\pa_{A} \circ f_i - (-1)^{|f_i|} f_i \circ \pa_{V} ), \dots,  f_m\} (v) = \\
 \sum_{ \substack{\al \\[0.1cm] 1\leq i\leq m}} \pm Q'_A \big( 1; f_{1},\dots,
  (\pa_{A} \circ f_i - (-1)^{|f_i|} f_i \circ \pa_{V} ), \dots, f_{m} \big)(\ga_{\al}; v^{\al}_1, v^{\al}_{2}, \dots, v^{\al}_m) \\
 = \sum_{ \substack{\al \\[0.1cm] 1\leq i\leq m}}\pm Q'_A \big( \ga_{\al}; f_{1}(v^{\al}_1),\dots,
   (\pa_{A} \circ f_i - (-1)^{|f_i|} f_i \circ \pa_{V} ) (v^{\al}_i), \dots, f_{m}(v^{\al}_m) \big)\\
 = \sum_{ \substack{\al \\[0.1cm] 1\leq i\leq m}}\pm Q'_A \big( \ga_{\al}; f_{1}(v^{\al}_1),\dots, \pa_{A} (f_i(v^{\al}_i)),\dots,f_{m}(v^{\al}_m) \big)\\
 - \sum_{ \substack{\al \\[0.1cm] 1\leq i\leq m}}\pm Q'_A \big( \ga_{\al};f_{1}(v^{\al}_1),\dots, f_i(\pa_{V}v^{\al}_i),\dots, f_{m}(v^{\al}_m) \big).
\end{multline}

The last sum in the R.H.S. of \eqref{goal-unfolded} is expanded as follows:
\begin{multline}
\label{QQ-fff}
\sum_{ \substack{1 \le  p \le  m\\[0.1cm] 
\si \in \Sh_{p, m-p} } } (-1)^{\ve(\si; f_1, \dots, f_m)}
 Q'_{A}\big(1; Q'_{A}\big( (1; f_{\si(1)}, \dots, f_{\si(p)}) \D_{p}(-)\big) , f_{\si(p+1)}, \dots, f_{\si(m)} (\D_{m-p+1}(v))\big) \\
= \sum_{ \substack{1 \le  p \le  m\\[0.1cm] 
\si \in \Sh_{p, m-p} } }\pm
Q'_A ( 1 \otimes Q'_A  \otimes 1^{\otimes (m-p)}) \big( 1; (1; f_{\si(1)}, \dots , f_{\si(p)}) 
, f_{\si(p+1)},\dots , f_{\si(m)} \big) \\
\big( (1 \otimes \D_p \otimes 1^{\otimes (m-p)} ) \circ  \D_{m-p+1} (v)  \big)\,.
\end{multline}

Using the axioms of $\cC$-coalgebra structure on $V$, we can rewrite the term 
$(1 \otimes \D_p \otimes 1^{\otimes (m-p)} )\circ \D_{m-p+1} (v) $
as follows:
\begin{equation}
\label{Del-Del-v}
(1 \otimes \D_p \otimes 1^{\otimes (m-p)} ) \circ \D_{m-p+1} (v) = 
(\D^{\cC}_{\bt_{m, p}} \otimes 1^{\otimes \, m})\circ \D_m (v)\,,
\end{equation}
where $\bt_{m,p}$ is the labeled planar tree depicted in Figure 
\ref{fig:bt-m-p-i}.
\begin{figure}[htp]
\centering 
\begin{tikzpicture}[scale=0.5]
\tikzstyle{w}=[circle, draw, minimum size=3, inner sep=1]
\tikzstyle{b}=[circle, draw, fill, minimum size=3, inner sep=1]
\node[w] (l1) at (0, 3) {};
\draw (6,3) node[anchor=center] {{\small $\dots$}};
\node[b] (limp) at (3, 3) {};
\draw (3,3.6) node[anchor=center] {{\small $p+1$}};
\node[b] (limp1) at (-2, 5) {};
\draw (-2.6,5.6) node[anchor=center] {{\small $1$}};
\draw (0,5) node[anchor=center] {{\small $\dots$}};
\node[b] (li) at (2, 5) {};
\draw (2.4,5.6) node[anchor=center] {{\small $p$}};
\node[b] (lm) at (9, 3) {};
\draw (9,3.6) node[anchor=center] {{\small $m$}};

\node[w] (v1) at (6, 0) {};
\node[b] (r) at (6, -1) {};
\draw (r) edge (v1);
\draw (v1) edge (l1);
\draw (v1) edge (limp);
\draw (v1) edge (lm);
\draw (l1) edge (limp1);
\draw (l1) edge (li);
\end{tikzpicture}
\caption{\label{fig:bt-m-p-i} The labeled planar tree $\bt_{m,p}$}
\end{figure} 
and let $\ga^{\si}_{\al, \beta, 1}$ and $\ga^{\si}_{\al, \beta, 2}$ be the tensor factors in 
\begin{equation}
\label{Del-tau-bt-mpi}
\D^{\cC}_{\si(\bt_{m,p})}(\ga_{\al}) = \sum_{\beta} \ga^{\si}_{\al, \beta, 1} \otimes \ga^{\si}_{\al, \beta, 2}\,,
\end{equation}
where $\si(\bt_{m,p})$ is the tree corresponding to the $(p,m-p)$-shuffle $\si$.

Using the axioms of the cooperadic comultiplication $\D^{\cC}$, we get 
$$
\textrm{Sum \eqref{QQ-fff}} =
$$
\begin{equation}
\label{the-sum111}
\sum^{\al, \beta}_{ \substack{1 \le  p \le  m \\[0.1cm] 
\si \in \Sh_{p, m-p} } } 
\pm Q'_A( 1 \otimes Q'_A \otimes 1^{\otimes (m-p)})
\end{equation}
$$
\Big(\, \ga^{\si}_{\al, \beta, 1}\,; ( \,\ga^{\si}_{\al, \beta, 2}; f_{\si(1)}(v^{\al}_{\si(1)}),\dots, f_{\si(p)}(v^{\al}_{\si(p)})), 
f_{\si(p+1)} (v^{\al}_{\si(p+1)}), \dots, 
f_{\si(m)} (v^{\al}_{\si(m)})  \Big)\,,
$$
which can be rewritten in terms of the bracket on 
the convolution Lie algebra $\Conv(\cC_{\c}, \End_{A})$ (Prop. 4.1, \cite{notes}).
Namely, 
$$
\textrm{Sum \eqref{QQ-fff} } =
$$
\begin{equation}
\label{the-sum-OK}
\frac{1}{2} \sum_{\al} [Q'_A , Q'_A] (\ga_{\al}; f_1(v^{\al}_1), \dots, f_m(v^{\al}_m))\,,
\end{equation}
where we tacitly identify the composition $Q'_A = p_A \circ Q_A$ with
the corresponding element in  
\begin{equation}
\label{Conv-cC-End-A}
\Conv(\cC_{\c}, \End_{A}) = \prod_{n \ge 1} 
\Hom_{S_n} \big(\cC_{\c}(n),   \End_{A}(n) \big)\,.
\end{equation}

Collecting the expanded terms of the R.H.S. of \eqref{goal-unfolded} we obtain 
\begin{equation*}
\sum_{\al}\pm\pa_{A}Q'_A \big( \ga_{\al}; f_{1}(v^{\al}_1), \dots, f_{m}(v^{\al}_m)) 
+ \sum_{\al} \pm Q'_A \big( \pa_{\cC}(\ga_{\al}); f_{1}(v^{\al}_1), \dots, f_{m}(v^{\al}_m) ) \\
\end{equation*}
\begin{equation}
\label{pa_V-1}
+ \sum_{ \substack{\al \\[0.1cm] 1\leq i\leq m}} \pm Q'_A \big(\ga_{\al}; f_{1}(v^{\al}_1), \dots, f_i(\pa_V(v^{\al}_{i})),  
\dots,  f_{m}(v^{\al}_m)\big)\\
\end{equation}
\begin{equation}
\label{pa_V-2}
 - \sum_{ \substack{\al \\[0.1cm] 1\leq i\leq m}}\pm Q'_A \big( \ga_{\al}; f_{1}(v^{\al}_1), \dots, f_i(\pa_{V}v^{\al}_i), \dots, f_{m}(v^{\al}_m) \big)\\
 \end{equation}
\begin{equation*}
 \sum_{ \substack{\al \\[0.1cm] 1\leq i\leq m}} \pm Q'_A \big( \ga_{\al}; f_{1}(v^{\al}_1), \dots, \pa_{A} (f_i(v^{\al}_i)), \dots, f_{m}(v^{\al}_m) \big) 
 + \frac{1}{2} \sum_{\al} [Q'_A , Q'_A] (\ga_{\al}; f_1(v^{\al}_1), \dots, f_m(v^{\al}_m))\,
\end{equation*}

Canceling terms \eqref{pa_V-1} and \eqref{pa_V-2}, we see that the right hand side 
of \eqref{goal-unfolded} can be rewritten as 
$$
\textrm{The R.H.S. of  \eqref{goal-unfolded} }  = 
$$
\begin{equation}
\Big( \pa_A  \circ Q'_A  + Q'_A \circ (\pa_{\cC} + \pa_A)   + \frac{1}{2} [Q'_A, Q'_A] \Big)  (1;f_1,\ldots,f_m)(\D_m(v))\,,
\label{RHS-goal-unfolded}
\end{equation}
where, as above, we identify $Q'_A$ with the corresponding element in 
\eqref{Conv-cC-End-A}.  

Thus desired equations \eqref{goal-brack} are satisfied due to the fact that 
$Q'_A$ is a MC element of dg Lie algebra \eqref{Conv-cC-End-A}. 

We conclude this proof by a comment about the sign factors. 

All sign factors in the above computations are subject to the usual Koszul rule.  
For example, to show explicitly that terms \eqref{pa_V-1} and \eqref{pa_V-2} cancel 
each other, we need to verify that the corresponding contributions from the term
\begin{equation}
\label{term1}
(-1)^{|f_1| + \dots + |f_m|}  \{f_1,\dots,f_m\}(\pa_{V}(v))
\end{equation}
matches with the contributions from  
\begin{equation}
\label{term2}
(-1)^{|f_1| + \dots + |f_i|} \{f_1,\dots,f_i \circ  \pa_{V},\dots,f_m\}(v)\,.
\end{equation}
in equation \eqref{goal-unfolded}. 

It is easy to see that the contribution 
\begin{equation}
\label{contribution}
Q'_{A}(\ga_{\al};f_1(v_{1}^{\al}),\dots,f_i \circ \pa_V(v_{i}^{\al}),\dots f_m(v_{m}^{\al}))
\end{equation}
from \eqref{term1} and from \eqref{term2} has the same sign factor
$$
(-1)^{\ve+ \ve'}, 
$$
where 
$$
\ve = |\ga_{\al}| (1 + |f_1| + \dots + |f_m|) + |v_1^{\al}| + \dots + |v_{i-1}^{\al}| + 
|f_{1}| + \dots + |f_i|\,,
$$
and 
$$
\ve' = |v_1^{\al}|(|f_2|+\dots +|f_{m}|) + \dots + |v^{\al}_{m-1}| |f_{m}|\,.
$$

Proposition \ref{prop:Conv-V-A} is proved.  \qed

\noindent\textsc{Department of Mathematics,
Temple University, \\
Wachman Hall Rm. 638\\
1805 N. Broad St.,\\
Philadelphia PA, 19122 USA \\
\emph{E-mail address:} {\bf vald@temple.edu}}

~\\

\noindent\textsc{Middle College Program \\
Gateway Community College\\ 
New Haven, CT 06510\\
\emph{E-mail address:} {\bf alexhoffnung@gmail.com}}

~\\

\noindent\textsc{Institut f\"ur Mathematik und Informatik\\
Universit\"at Greifswald\\
Walther-Rathenau-Strasse 47\\
17487 Greifswald, Germany
\emph{E-mail address:} {\bf rogersc@uni-greifswald.de} }

\end{document}